\newtheorem{theorem}{Theorem}
\newtheorem{example}[theorem]{Example}
\newtheorem{lemma}[theorem]{Lemma}
\newtheorem{remark}[theorem]{Remark}
\newtheorem{corollary}[theorem]{Corollary}
\newtheorem{definition}[theorem]{Definition}
\newtheorem{proposition}[theorem]{Proposition}
\newtheorem*{problem1}{Anonymity Problem}
\newtheorem*{problem2}{Neutrality Problem}
\newtheorem*{problem3}{Symmetry Problem}
\newtheorem*{openproblem}{Open Problem}
\begin{document}

\title{Symmetry groups for social preference functions }

\author{\textbf{Daniela Bubboloni}\footnote{Supported by GNSAGA of INdAM (Italy).
} \\
{\small {Dipartimento di Matematica e Informatica U.Dini} }\\
\vspace{-6mm}\\
{\small {Universit\`{a} degli Studi di Firenze} }\\
\vspace{-6mm}\\
{\small {viale Morgagni 67/a, 50134 Firenze, Italy}}\\
\vspace{-6mm}\\
{\small {e-mail: daniela.bubboloni@unifi.it}}\\
\vspace{-6mm}\\
{\small https://orcid.org/0000-0002-1639-9525} \and \textbf{Francesco Nardi}
\\
\\
{\small {e-mail: fardifran@gmail.com }}\\
\vspace{-6mm}\\
{}\\
\vspace{-6mm}\\}

\maketitle

\begin{abstract}
\noindent We introduce the anonymity group, the neutrality group and the symmetry group of a social preference function. Inspired by a problem posed by Kelly in 1991 and remained unsolved, we investigate the problem of recognizing which permutation groups may arise as anonymity, neutrality and symmetry group of a social preference function. 
 A complete description is found for the neutrality groups and a sufficient condition, which largely  encompasses the problem, is found for the anonymity groups.
 Using the concept of orbit extension of a group $U$, we formulate manageable necessary conditions for being $U$ an anonymity or a symmetry group. Our research deeply interacts with problems of representability by Boolean functions shedding light on them.
 \end{abstract}

\vspace{4mm}

\noindent \textbf{Keywords:} social choice theory; group theory; Boolean functions.

\vspace{2mm}

\noindent \textbf{MSC classification:} 91B14, 20B05, 06E30.

\section{Introduction}

The main object of social choice theory is the description of the aggregation of individual preferences into a collective choice. Individual preferences are expressed by relations on a set $N$ of $n\geq 2$ alternatives and it is usually required that those relations are orders, that is, reflexive, transitive and complete. Very often it is actually required that they are also antisymmetric, that is, linear orders. The ordered list of the individual preferences is called the preference profile.
The collective choice can be many different things: a set of relations on $N$, a set of linear orders on $N$, a set of $k$-sets  of $N$ for some $k\geq 1$.  The aggregation is given by some procedure $F$ given by a function from the preference profile set $\mathcal{P}$ into the set of desired collective choices. For all the possible instances  of collective choice it is considered a remarkable property for $F$ to be resolute, that is, to have as output a singleton. For instance, a social choice function ({\sc scf}) is defined as a procedure for which the individual preferences are linear orders and the collective choice is a single alternative; a social preference function ({\sc spf}) is defined as a procedure for which the individual preferences are  linear orders and  the collective choice is a single linear order.

Many properties can be introduced to qualify an aggregation procedure. Anonymity, neutrality, efficiency, Condorcet consistency and strategy-proofness are only few possible examples.  Among them two basic requirements to which it is very hard to renounce are certainly anonymity and neutrality.  Anonymity  expresses the fact that the names of the individuals are immaterial and do not impact the collective choice; neutrality expresses the fact that the names of the alternatives are immaterial and do not impact the collective choice. Unfortunately, usually  anonymity and neutrality cannot be guaranteed for resolute aggregation procedures. 
	
Let $h\geq 2$ be the number of individuals. A famous theorem by Moulin (\cite[Theorem 1, p.25]{mou})  establishes that there exists an anonymous, neutral and efficient {\sc scf} if and only if
\begin{equation}\label{moulin-condition}
\gcd(h,n!)=1,
\end{equation}
a very demanding arithmetical condition. Recall that a {\sc scf} is said efficient if, for every preference profile, it does not select an alternative which is unanimously beaten by another alternative.
If one renounces to efficiency, the existence of an anonymous and neutral  {\sc scf}  is equivalent to ask that the number $n$ of alternatives cannot be written as sum of non-trivial divisors of $h$ (\cite[Problem 1, p.25]{mou}), a slightly less demanding arithmetical condition. 

With no doubt  \eqref{moulin-condition} seems to be  a very crucial and pervasive condition for the description of anonymity and neutrality of aggregating procedures. For instance, as a further example relevant for our research, Bubboloni and Gori in \cite[Theorem 5]{BG14} prove that there exists an anonymous and neutral {\sc spf} if and only if \eqref{moulin-condition} holds. One possible explanation  for \eqref{moulin-condition} is surely the group theoretical interpretation of anonymity and neutrality proposed in   \cite{BG14} and in \cite{BG15},  where the fundamental notion of regular group is introduced and investigated.

It is also clear that even when the arithmetic link between $h$ and $n$ allows to design at least one aggregation procedure satisfying  anonymity and neutrality, it can happen that a specific aggregation procedure, which for some reason is considered appropriate for a certain decisional process,  fails  anonymity and neutrality. Recall, for instance, that when we break ties assigning the role of president to one of the individuals involved in the decisional process we necessarily destroy the anonymity. Similarly, if we break ties through a tie-breaking agenda, that is, an exogenously given ranking of the alternatives, then we necessarily destroy 
neutrality. In other words, typically, a resolute aggregation procedure satisfies only weak versions of anonymity and neutrality.

Those considerations naturally lead to two issues which we aim to address:
\begin{itemize}
\item[$(a)$] Given an aggregation procedure $F$ of a certain type, how can we  define a mathematical object capable to fully describe the anonymity and neutrality level of $F$?
\item[$(b)$] Given a specific level of anonymity and neutrality, when is it possible to design an aggregation procedure $F$ of a certain type having such a level of anonymity and neutrality?
\end{itemize}
The above questions are surely not a novelty in social choice literature. Our main reference and source of inspiration is given by the open problems set by
Kelly in \cite{Kelly91} in the context of aggregation procedures in which the individual preferences are orders (not necessarily linear) and the collective choice is a set of alternatives to be chosen within a certain subset of $N$. He considers, within that family, the subfamily in which the individual preferences are linear orders and reserves a particular consideration to  the resolute case. Moreover, in relation to $(a)$, Kelly defines separately a group of anonymity and a group of neutrality and  puts in evidence that question $(b)$ appears very hard to manage. That difficulty holds even taking into account some suitable subfamilies as, for instance, the {\sc scf}s. The fact that the problems set in $(b)$ are as interesting as hard is confirmed by the content of 
 \cite{Kelly92}, where Kelly gives some partial answers only for the  case in which the level of anonymity and neutrality is expressed through an abelian group. However, a general framework to attack the entire problem seems missing.

The goal of this paper is to give such a framework and answer to the issues $(a)$ and $(b)$ for {\sc spf}s. 
This is part of a wider ambitious project that would concern, in further papers, the {\sc scf}s and other families  of collective choices.
The algebraic instruments developed by Bubboloni and Gori (\cite{BG14},\cite{BG15},\cite{BG16}) reveal here all their power and give the main tool for the research, allowing the construction of the general framework.

While dealing with this problem in social choice theory, we immediately realized a main surprisingly connection between the anonymity problem for {\sc spf}s and the representability by Boolean functions, a  topic having a long history (see, for instance, \cite{CK},\cite{Kisie98},\cite{Gre10},\cite{KisieGre14},\cite{HM},\cite{HM21},\cite{KisieGre19}) and arising in the study of parallel complexity of formal languages. Surely our methods and results impact on classic representability questions, and we give some contribution throughout the paper. In particular, we formulate an original necessary condition for being $2$-representable (Corollary \ref{O-repres}).

The structure of the paper is the following.  In Section \ref{sec-one}, we give the basic notation and recall the fundamental theorems by Bubboloni and Gori (\cite{BG14},\cite{BG15},\cite{BG16}). In Section \ref{groups-asso}, in the view of $(a)$, we introduce the anonymity group, the neutrality group and the symmetry group of a social preference function and explore their elementary properties. Then, in the view of $(b)$, we define the concept of anonymity group, neutrality group and  symmetry group and pose the corresponding anonymity problem,  neutrality problem and symmetry problem. In Sections \ref{firstSteps} and \ref{anonimityESymmetry} we explore the link between the symmetry groups and the concept of regularity and show that every anonymity group is also a  symmetry group. In section \ref{sol-neut} we completely solve the neutrality problem, showing that every group of permutations of the alternatives can appear as neutrality group of a certain social preference function (Theorem  \ref{neutralita}). In Section \ref{sec-anonim} we explore the  harder anonymity problem giving a complete answer in the case $h\leq n!$ (Corollary \ref{anon-hminore}) and thus identifying the anonymity groups which arise allowing the number of alternatives to vary. In Section \ref{crt-ex} we discuss a crucial example of a neutrality group which is not a symmetry group.
In Section \ref{subsec:funzioni booleane} we make clear the connection between the anonymity problem for {\sc spf}s and the representability by  Boolean functions.  Finally in Section \ref{capitolo O(U)} we provide a necessary condition for symmetry and anonymity (Theorem \ref{necessh}), by introducing the concept of orbit extension.

\section{Preliminary definitions and results}\label{sec-one}

 Given $k\in \mathbb{N}$, and $r\in \mathbb{N}$ a prime we denote by $k_r$ the so-called $r$-part of $k$, that is the maximum $r$-power dividing $k$. We also define $[k]:=\{y\in \mathbb{N}\mid y\leq k\}$ and $[k]_0:=\{y\in \mathbb{N}\cup\{0\}\mid y\leq k\}.$ 
 
In this paper we are exclusively interested in the finite groups that arise as subgroups of direct products of two symmetric groups, the first one dealing with permutations of individuals names  and the second one dealing with permutations of alternatives names. 
Given a  nonempty finite set $X$, we denote by $S_X$  the symmetric group on $X$. If $\sigma_1,\sigma_2\in S_X$, the permutation $\sigma_1\sigma_2$ is such that $\sigma_1\sigma_2(x)=\sigma_1(\sigma_2(x)),$ for all $x\in X.$ Moreover, the  conjugate of  $\sigma_1$ by $\sigma_2$ is given by $\sigma_1^{\sigma_2} := \sigma_2\sigma_1\sigma_2 ^{-1}. $
If $X=[k]$ for some $k\in \mathbb{N}$, then  we use the more compact notation $S_k.$
Let $\sigma\in S_k$. The orbit of $j\in [k]$ under the action of $\langle \sigma\rangle$ is denoted by $j^{\langle \sigma\rangle}$. The orbits  of $\sigma$, that is the set $\{j^{\langle \sigma\rangle}:j\in [k]\}$,  give a partition of $[k]$ and thus their sizes $x_1,\dots,x_r$ sum up to $k.$  We call  type of $\sigma$ the unordered list $T_{\sigma}:=[x_1,\dots,x_r]$. We also set $\gcd(T_{\sigma}):=\gcd\{x_1,\dots, x_r\}$  and $\mathrm{lcm}(T_{\sigma}):=\mathrm{lcm}\{x_1,\dots, x_r\}.$
Note that the number of terms equal to $1$ in $T_{\sigma}$ counts the fixed points of $\sigma.$ 
Recall that two permutations are conjugate if and only if they have the same type and that $o(\sigma)=\mathrm{lcm} (T_{\sigma}).$

If $\mathcal{A}$ is a set of subgroups of a group $G$, we say that $\mathcal{A}$ is  subgroup-closed if $W\in \mathcal{A}$  and $U\leq W$ imply $U\in \mathcal{A}$; conjugacy-closed if $W\in \mathcal{A}$  and $y\in G$ imply $W^y\in \mathcal{A}.$

\subsection{ The action of the group $G$ on preference profiles} 

Let $H:=[h]$ with $h\geq 2$ be the set of the names of individuals and $N:=[n]$, with $n\geq 2$ be the set of the names of alternatives. We call the pair $(h,n)$ a voting pair.
Throughout  the paper we denote by $G$ the group $G:=S_h\times S_n$. 

The functions  $\pi_1: G \rightarrow S_h$ and  $\pi_2: G \rightarrow S_n$ defined, respectively, by $\pi_1(\varphi,\psi)= \varphi$ 
and  $\pi_2(\varphi,\psi)= \psi$ for all $(\varphi,\psi)\in G$,
are group homomorphisms called  projection on the first and second component respectively. 
 Let $g \in G$. Then we have $g=(\pi_1(g), \pi_2(g))$. We set $\varphi_g:=\pi_1(g)$ and $\psi_g:=\pi_2(g)$ in order to get the more coincise writing $g=(\varphi_g,\psi_g)$. 
  
Let  $U \leq G$. Then we have $\pi_1(U) \leq S_h$, $\pi_2(U) \leq S_n$ and $U \leq \pi_1(U) \times \pi_2(U)$ with inclusion generally strict.
Among the subgroups of $G$ we are  interested in $\mathcal {D}_G:=\{V \times W \leq G \mid  V \leq S_h \, \, \text{and} \, \, W \leq S_n  \}.$
Note that $U\in \mathcal {D}_G$ if and only if $U=\pi_1(U)\times \pi_2(U).$

Let $\mathcal{L}(N)$ be the set of linear orders on $N$.
As explained in detail in \cite[Section 2.2]{BG15}, there is a natural identification of $\mathcal{L}(N)$ with $S_n$ and we deeply rely on that. For instance, the linear order $3>2>4>1$ is identified with $\sigma\in S_4$ such that $\sigma(1)=3, \sigma(2)=2, \sigma(3)=4, \sigma(4)=1$ , that is, with $\sigma=(134).$

We then assume that each individual $i\in H$ expresses her  preferences through some $p_i\in S_n$. The corresponding  preference profile is given by $p=(p_i)^h_{i=1}\in \mathcal{L}(N)^h=(S_n)^h$. We denote the set of preference profiles
 by $\mathcal{P}_h(n):=\mathcal{L}(N)^h$ or more simply by $\mathcal{P}$, when we do not need to emphasize the dependence on $(h,n)$.
  
A preference profile $p \in \mathcal{P}$ is called costant if  there exists $\sigma\in S_n$ such that, for every $i \in H$, $p_i=\sigma$. We denote such $p$ by $p_{\sigma}$. The subset of $\mathcal{P}$ formed by the constant preference profiles is denoted by $\mathcal{K}.$

For every $(\varphi,\psi)\in G$ and $p\in \mathcal{P}$, we denote by  $p^{(\varphi,\psi)} \in \mathcal{P}$ the preference profile such that, for every $i\in H$,
\begin{equation*}\label{action}
\left(p^{(\varphi,\psi)}\right )_i:=\psi p_{\varphi^{-1}(i)}.
\end{equation*}

Remarkably, that definition, introduced in \cite{BG15}, determines an action of the group $G$ on $\mathcal{P}$. 
\begin{proposition}{\rm\cite[Proposition 2]{BG15}}
\label{prop:azio}
Let  $U \leq G$ and  $p \in \mathcal{P}$. 
\begin{enumerate}
\item[$(i)$] For every  $(\varphi_1, \psi_1), (\varphi_2,\psi_2) \in U$, we have
\begin{equation}
\label{azione} 
p^{(\varphi_1\varphi_2,\psi_1\psi_2)}=\big(p^{(\varphi_2,\psi_2)}\big)^{(\varphi_1,\psi_1)}.
\end{equation}
\item[$(ii)$] The function  $f : U \rightarrow S_{\mathcal{P}}$ defined, for every $(\varphi,\psi) \in U$, by 
\[
f(\varphi,\psi): \mathcal{P} \rightarrow \mathcal{P},\quad \quad p \mapsto p^{(\varphi,\psi)}
\]
is an action of the group $U$on the set $\mathcal{P}.$
\end{enumerate}
\end{proposition}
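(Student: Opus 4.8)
The plan is to verify both parts by a direct componentwise computation from the defining formula $\left(p^{(\varphi,\psi)}\right)_i = \psi\, p_{\varphi^{-1}(i)}$, working separately for each $i\in H$.

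For part $(i)$, I would fix $i\in H$ and expand the two sides. On the left, using $(\varphi_1\varphi_2)^{-1}=\varphi_2^{-1}\varphi_1^{-1}$, one obtains $\left(p^{(\varphi_1\varphi_2,\psi_1\psi_2)}\right)_i=(\psi_1\psi_2)\, p_{\varphi_2^{-1}(\varphi_1^{-1}(i))}$. On the right, setting $q:=p^{(\varphi_2,\psi_2)}$ and applying the defining formula twice gives $\left(q^{(\varphi_1,\psi_1)}\right)_i=\psi_1\, q_{\varphi_1^{-1}(i)}=\psi_1\psi_2\, p_{\varphi_2^{-1}(\varphi_1^{-1}(i))}$. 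Since the two expressions agree for every $i$, the profiles coincide and $(i)$ follows. Note that this computation never uses that the elements belong to $U$: it holds for all of $G$, so the assertion restricted to $U\leq G$ is immediate.

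For part $(ii)$, I would first record that $f(\varphi,\psi)$ really does map $\mathcal{P}$ into $\mathcal{P}$, since each component $\psi\, p_{\varphi^{-1}(i)}$ is a product in $S_n$, hence an element of $S_n=\mathcal{L}(N)$. Next, the identity element $(1_{S_h},1_{S_n})$ acts trivially, as $\left(p^{(1_{S_h},1_{S_n})}\right)_i=p_i$, so $f(1_{S_h},1_{S_n})=\mathrm{id}_{\mathcal{P}}$. Combining this with $(i)$, for every $g\in U$ the maps $f(g)$ and $f(g^{-1})$ are mutually inverse --- e.g. $f(g)\big(f(g^{-1})(p)\big)=\big(p^{g^{-1}}\big)^{g}=p^{gg^{-1}}=p$ and symmetrically --- so $f(g)\in S_{\mathcal{P}}$, i.e. $f$ indeed takes values in $S_{\mathcal{P}}$. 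Finally, $(i)$ reads $f(g_1g_2)(p)=p^{g_1g_2}=\big(p^{g_2}\big)^{g_1}=f(g_1)\big(f(g_2)(p)\big)$, so $f(g_1g_2)=f(g_1)f(g_2)$ in $S_{\mathcal{P}}$ with the composition convention $\sigma_1\sigma_2(x)=\sigma_1(\sigma_2(x))$ fixed in the preliminaries; thus $f$ is a group homomorphism, which is exactly the statement that $f$ is an action of $U$ on $\mathcal{P}$.

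There is no genuine obstacle here: the only point requiring care is the bookkeeping of conventions. The exponential notation $p^{g}$ encodes a left action, which is why the compatibility law appears in the ``reversed'' form $p^{g_1g_2}=(p^{g_2})^{g_1}$ of $(i)$, and one must make sure the homomorphism $f$ is interpreted with the same composition convention adopted for symmetric groups earlier in the text. All remaining content is the mechanical substitution described above.
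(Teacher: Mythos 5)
Your verification is correct and complete: the componentwise expansion of both sides of \eqref{azione}, the check that the identity acts trivially, the observation that $f(g)$ and $f(g^{-1})$ are mutually inverse so that $f$ lands in $S_{\mathcal{P}}$, and the matching of the homomorphism property with the composition convention $\sigma_1\sigma_2(x)=\sigma_1(\sigma_2(x))$ are all the right steps and all carried out correctly. The paper itself gives no proof of this proposition --- it is imported verbatim from \cite[Proposition 2]{BG15} --- but your argument is exactly the standard direct verification that reference performs, so there is nothing further to compare.
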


As a consequence we can use the typical objects related to an action. To start with, for $U\leq G$, we consider the $U$-orbit of $p\in \mathcal{P}$ given by 
$p^U:=\{p^g\in \mathcal{P}: g\in U\}$. 
The set  $\mathcal{P}^U=\{p^U:p\in\mathcal{P}\}$ of the $U$-orbits is a partition of $\mathcal{P}$. We denote the size of $\mathcal{P}^U$ by $R(U)$. When only a single subgroup $U$ comes into play, we write simply  $R$ instead of $R(U)$. 
Any ordered list  $(p^j)_{j=1}^{R}\in\mathcal{P}^{R}$ such that $\mathcal{P}^U=\{p^{j\,U} : j\in [R]\}$,
is called a system of representatives of the $U$-orbits. We denote the set of the systems of representatives of the $U$-orbits by $\mathfrak{S}(U)$.
For $p\in\mathcal{P}$, the stabilizer of $p$ in $U$ is the subgroup of $U$ defined by
$\mathrm{Stab}_U(p):=\{g\in U : p^g=p \}.$
As well-known, the size of $p^U$ is equal to the index of $\mathrm{Stab}_U(p)$ in $U.$

\subsection{$U$-Symmetric {\sc spf}s}
A  {\it social preference function} {\sc spf} on $n$ alternatives and $h$ individuals
 is a function from $\mathcal{P}=\mathcal{L}(N)^h$ to $\mathcal{L}(N)=S_n$. We denote
the set of  {\sc spf}s for the voting pair $(h,n)$  by $\mathcal{F}_h(n)$. If we do not need to emphasize the dependence on $(h,n)$, we  use the notation $\mathcal{F}$ instead of $\mathcal{F}_h(n)$.

We exhibit an easy example for the reader unfamiliar with the topic.
\begin{example}\label{majority}
{\rm  Assume that there are only $n=2$ alternatives so that $N=\{1,2\}$, and $h$ individuals. Then the possible preferences for the individuals are only two: $1>2$ and $2>1$. In terms of the symmetric group $S_2$, those preferences are, of course, $id$ and $(12).$ For a preference profile $p\in (S_2)^h$, let $c_{p}(1,2):=|\{i\in [h]:p_i=id\}|$ be the number of individuals which express the preference $1>2$.
We define the majority {\sc spf} with ties broken by $1>2$, as the function
$M:(S_2)^h\rightarrow S_2$ given, for every $p\in (S_2)^h$, by 
\[M(p)=
\begin{cases}
(12) \quad  \text{if } \quad c_{p}(1,2)<h/2\\
id \quad \quad \text{if } \quad c_{p}(1,2)\geq h/2.\\
\end{cases}
\]
Note that when $h$ is even and exactly $h/2$ individuals express the preference $1>2$ and $h/2$ individuals express the preference $2>1$, $M$ selects $1>2.$}
\end{example}
\begin{definition}\label{u-sym}
\rm{Let $F\in \mathcal{F}$. Given a subgroup $U$ of $G$, 
we say that $F\in \mathcal{F}$ is $U$-{\it symmetric} if, for every $p\in \mathcal{P}$ and $(\varphi,\psi)\in U$,
\[
F(p^{(\varphi,\psi)})=\psi F(p),
\]
where the product in the right side is the product of the permutations $\psi$ and $F(p)$ inside the group $S_n.$
The set of $U$-symmetric  {\sc spf}s is denoted by $\mathcal{F}_h(n)^U$, or more simply by $\mathcal{F}^U$ when we do not need to emphasize the dependence on $(h,n)$.} \end{definition}
For instance, consider the {\sc spf} $M$ in Example \ref{majority}. If $h$ is even, then $M$  is $S_h\times\{id\}$-symmetric. However, $M$ is not $S_h\times S_2$-symmetric. That relies on the fact that 
 ties are broken privileging the alternative  $1$ with respect to the alternative $2.$ If $h$ is odd, we instead see that $M$ is $S_h\times S_2$-symmetric. 
\smallskip

As a particular case of symmetry we recover anonymity and neutrality. In fact,
 $F\in \mathcal{F}$ is called \textit{anonymous} if  $F \in \mathcal{F}^{S_h \times \lbrace id \rbrace}$; \textit{neutral} if $F \in \mathcal{F}^{ \lbrace id \rbrace \times S_n}.$
 We recall a useful result.
 \begin{proposition} {\rm\cite[Proposition 1]{BG15}} \label{generato_intersezione}
  If $U_1,U_2 \leq G$, then $\mathcal{F}^{U_1} \cap \mathcal{F}^{U_2}= \mathcal{F}^{\langle U_1,U_2\rangle}$. In particular,  $F$ is anonymous and neutral if and only if $F\in \mathcal{F}^G$.
\end{proposition}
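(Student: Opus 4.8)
The plan is to reduce the statement to the elementary fact that the symmetries of a fixed {\sc spf} form a subgroup of $G$. Concretely, for $F \in \mathcal{F}$ consider the subset
\[
\Gamma(F) := \{\, g \in G : F(p^g) = \psi_g\, F(p)\ \text{ for every } p \in \mathcal{P} \,\}.
\]
Comparing with Definition \ref{u-sym}, for any subgroup $U \leq G$ one has $F \in \mathcal{F}^U$ exactly when $U \subseteq \Gamma(F)$, and, once we know that $\Gamma(F)$ is itself a subgroup of $G$, this is the same as $U \leq \Gamma(F)$. Granting this, the proposition is immediate: $F \in \mathcal{F}^{U_1} \cap \mathcal{F}^{U_2}$ iff $U_1 \leq \Gamma(F)$ and $U_2 \leq \Gamma(F)$, iff $\langle U_1, U_2 \rangle \leq \Gamma(F)$ --- because $\langle U_1, U_2 \rangle$ is the smallest subgroup of $G$ containing $U_1$ and $U_2$ --- iff $F \in \mathcal{F}^{\langle U_1, U_2\rangle}$.

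So the real work is to verify that $\Gamma(F) \leq G$, and here I would rely on Proposition \ref{prop:azio} --- namely, that $(\varphi,\psi)\mapsto(p\mapsto p^{(\varphi,\psi)})$ is an action of $G$ on $\mathcal{P}$ --- together with the composition rule \eqref{azione}. First, $(id,id) \in \Gamma(F)$ because $p^{(id,id)} = p$. For closure under products, take $g_1,g_2 \in \Gamma(F)$ and $p \in \mathcal{P}$; by \eqref{azione} we have $p^{g_1 g_2} = (p^{g_2})^{g_1}$, so
\[
F(p^{g_1 g_2}) = F\big((p^{g_2})^{g_1}\big) = \psi_{g_1}\, F(p^{g_2}) = \psi_{g_1}\psi_{g_2}\, F(p) = \psi_{g_1 g_2}\, F(p),
\]
using $g_1 \in \Gamma(F)$ at the profile $p^{g_2}$ and then $g_2 \in \Gamma(F)$ at $p$. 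For closure under inverses, take $g \in \Gamma(F)$ and $p \in \mathcal{P}$, and set $q := p^{g^{-1}}$; since $g^{-1}g = (id,id)$, the rule \eqref{azione} gives $q^g = p$, hence $g \in \Gamma(F)$ applied at $q$ yields $F(p) = F(q^g) = \psi_g\, F(q) = \psi_g\, F(p^{g^{-1}})$, that is $F(p^{g^{-1}}) = \psi_g^{-1}\, F(p) = \psi_{g^{-1}}\, F(p)$, so $g^{-1} \in \Gamma(F)$.

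Finally, for the ``in particular'' claim I would observe that $\langle S_h \times \{id\},\, \{id\} \times S_n \rangle = G$, because the subgroup on the left already contains every product $(\varphi,id)(id,\psi) = (\varphi,\psi)$; since $F$ is anonymous and neutral precisely when $F \in \mathcal{F}^{S_h\times\{id\}} \cap \mathcal{F}^{\{id\}\times S_n}$, the main equivalence gives at once that this holds iff $F \in \mathcal{F}^G$.

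I do not expect a genuine obstacle: the argument is the familiar ``symmetries form a group'' pattern, and the only place where the subgroup property of $\Gamma(F)$ is really used is the passage from $U_1,U_2 \subseteq \Gamma(F)$ to $\langle U_1,U_2\rangle \subseteq \Gamma(F)$. The one point that needs care is the bookkeeping in the use of \eqref{azione} --- applying $g_1 g_2$ to a profile means applying $g_2$ first and $g_1$ second, and the inverse step uses the parallel identity $g^{-1}g = (id,id)$ rather than $g g^{-1}$; keeping these orders straight is essentially all the proof demands.
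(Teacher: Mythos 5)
Your argument is correct. Note, however, that the paper does not actually prove this proposition: it is imported verbatim from \cite[Proposition 1]{BG15} (``We recall a useful result''), so there is no in-paper proof to compare against. That said, your route is exactly the one the paper's own machinery suggests: your $\Gamma(F)$ is precisely the symmetry group $G(F)$ of Definition \ref{def_gruppidisimmetria}, and your verification that it is a subgroup reproduces the computation in the proof of Proposition \ref{legami}\,$(i)$ --- with the small difference that the paper checks only closure under products and invokes finiteness of $G$ to conclude, whereas you check inverses directly, which is the more robust habit. Once $\Gamma(F)\leq G$ is in hand, the reduction of $\mathcal{F}^{U_1}\cap\mathcal{F}^{U_2}=\mathcal{F}^{\langle U_1,U_2\rangle}$ to ``$U_1,U_2\subseteq\Gamma(F)$ iff $\langle U_1,U_2\rangle\subseteq\Gamma(F)$'' is airtight, as is the ``in particular'' clause via $\langle S_h\times\{id\},\{id\}\times S_n\rangle=G$. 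One pedantic remark: in the inverse step, \eqref{azione} gives $q^{g}=(p^{g^{-1}})^{g}=p^{g g^{-1}}$, so the identity being used is $gg^{-1}=(id,id)$ rather than $g^{-1}g=(id,id)$ as you state; since both products are the identity this changes nothing, but the bookkeeping comment at the end has the two reversed.
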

Note that if $U'\le U$, then $\mathcal{F}^{U}\subseteq  \mathcal{F}^{U'}.$
Note also that $\mathcal{F}^1=\mathcal{F}$ is never empty. However, for a generic $U\leq G$, the existence of a $U$-{\it symmetric} {\sc spf}  is not guaranteed. To that issue is  devoted the next section.
 
\section{Regular subgroups of $G$}\label{new-reg}

\subsection{Basic facts}
In order to address the possibility of having $\mathcal{F}_h(n)^U\neq \varnothing$  for some voting pair $(h,n)$ and understand the concreteness of construction of $U$-symmetric  {\sc spf}s, we recall a fundamental definition from \cite{BG15}.
A subgroup $U$  of $G=S_h\times S_n$ is said {\it regular} if, for every $p\in\mathcal{P}$,
\begin{equation}\label{property}
\begin{array}{c}
\vspace{-2mm}\\
\mathrm{Stab}_U(p)\subseteq S_h\times \{id\}.
\end{array}
\end{equation}
The set of regular subgroups of $G$ is denoted by $\mathcal{R}_h(n)$ or more simply by $\mathcal{R}$, when we do not need to emphasize the dependence on $(h,n)$. We stress  that the word regular is used with a different meaning in permutation group theory,  indicating a transitive permutation group with trivial stabilizer. The same word is used in \cite{BG15} for some analogy,  because the sugroups satisfying \eqref{property} have stabilizers with trivial projection on the factor $S_n$.
 \begin{proposition}\label{Rsubclosed} $\mathcal{R}$  is nonempty and subgroup-closed. In particular, $G$ is regular if and only if each subgroup of $G$ is regular.  
\end{proposition}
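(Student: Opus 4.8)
The plan is to check the two assertions—that $\mathcal{R}$ is nonempty and that it is subgroup-closed—directly from the definition of regularity, and then to obtain the ``in particular'' clause as a purely formal consequence.

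First I would settle nonemptiness by exhibiting an explicit member of $\mathcal{R}$. The cleanest choice is the trivial subgroup $\{(id,id)\}$: for every $p\in\mathcal{P}$ we have $\mathrm{Stab}_{\{(id,id)\}}(p)=\{(id,id)\}\subseteq S_h\times\{id\}$, so \eqref{property} holds trivially and $\mathcal{R}\neq\varnothing$. (Alternatively one may observe that $S_h\times\{id\}$ is regular, since every stabilizer it produces is by construction contained in $S_h\times\{id\}$.)

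Next, for the subgroup-closed property, I would take $W\in\mathcal{R}$ and $U\leq W$, and use the fact—implicit in Proposition \ref{prop:azio}—that the action of $U$ on $\mathcal{P}$ is just the restriction of the action of $W$. Consequently, for every $p\in\mathcal{P}$ one has $\mathrm{Stab}_U(p)=\mathrm{Stab}_W(p)\cap U\subseteq \mathrm{Stab}_W(p)\subseteq S_h\times\{id\}$, the last inclusion being the regularity of $W$. Hence $U$ satisfies \eqref{property} for all $p$, i.e.\ $U\in\mathcal{R}$, which is exactly what subgroup-closedness requires.

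Finally, the equivalence follows without further work: if $G$ is regular then $G\in\mathcal{R}$, so by the subgroup-closed property just established every $U\leq G$ lies in $\mathcal{R}$; conversely, if every subgroup of $G$ is regular, then so is $G$ itself. I do not expect any genuine obstacle here—the whole argument is an unwinding of the definitions of $\mathrm{Stab}_U(p)$ and of regularity; the only point requiring a little care is the stabilizer-restriction identity $\mathrm{Stab}_U(p)=\mathrm{Stab}_W(p)\cap U$, which should be stated explicitly.
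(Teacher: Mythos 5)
Your proof is correct and follows essentially the same route as the paper: nonemptiness via an explicit regular subgroup contained in $S_h\times\{id\}$, and subgroup-closedness via the inclusion of stabilizers $\mathrm{Stab}_U(p)\subseteq\mathrm{Stab}_W(p)\subseteq S_h\times\{id\}$. The only cosmetic difference is that the paper cites the whole family $V\times\{id\}$ rather than just the trivial group, and states the stabilizer inclusion directly without the intersection identity.
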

 \begin{proof} The subgroups of $G$ of the form $V \times \lbrace id \rbrace$, for some $V \leq S_h$, are regular so that $\mathcal{R}\neq \varnothing.$
Let $U\in \mathcal{R}$  and $W\leq U$. Then we have  $\mathrm{Stab}_W(p)\leq\mathrm{Stab}_U(p)\subseteq S_h\times \{id\},$ so that $W\in \mathcal{R}$.  
 \end{proof}

 An immediate consequence of the definition, is the following useful lemma.
\begin{lemma}\label{banale-reg}
Let $g_1=(\varphi_1,\psi_1),g_2=(\varphi_2,\psi_2) \in G$. If $\langle g_1,g_2 \rangle\in \mathcal{R}$ and $p \in \mathcal{P}$, then 
$p^{g_1}=p^{g_2}$ implies $ \psi_1=\psi_2.$
\end{lemma}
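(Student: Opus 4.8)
The plan is to turn the hypothesis $p^{g_1}=p^{g_2}$ into a membership statement in the stabilizer $\mathrm{Stab}_U(p)$ for the group $U:=\langle g_1,g_2\rangle$, and then read off the conclusion from regularity. First I would invoke Proposition~\ref{prop:azio}: the assignment $g\mapsto(q\mapsto q^{g})$ is an action of $U$ on $\mathcal{P}$, so each $g\in U$ acts as a bijection of $\mathcal{P}$ whose inverse is the action of $g^{-1}$, and $p^{gg'}=(p^{g'})^{g}$ for all $g,g'\in U$.

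Next, I would apply the action of $g_2^{-1}\in U$ to both sides of the equality $p^{g_1}=p^{g_2}$. On the right-hand side, $(p^{g_2})^{g_2^{-1}}=p^{g_2^{-1}g_2}=p$; on the left-hand side, $(p^{g_1})^{g_2^{-1}}=p^{g_2^{-1}g_1}$. Hence $p^{g_2^{-1}g_1}=p$, which says precisely that $g_2^{-1}g_1\in\mathrm{Stab}_U(p)$. Writing $g_2^{-1}g_1=(\varphi_2^{-1}\varphi_1,\psi_2^{-1}\psi_1)$ — valid since $\pi_1$ and $\pi_2$ are homomorphisms — and using that $U\in\mathcal{R}$, property \eqref{property} forces $\mathrm{Stab}_U(p)\subseteq S_h\times\{id\}$, so the second component $\psi_2^{-1}\psi_1$ equals $id$, i.e.\ $\psi_1=\psi_2$, as desired.

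There is no real obstacle here beyond careful bookkeeping with the left-action convention: one must be sure that acting by $g_2^{-1}$ on $p^{g_2}$ yields $p^{g_2^{-1}g_2}$ (which collapses to $p$) and that the element landing in the stabilizer is $g_2^{-1}g_1$ rather than $g_1g_2^{-1}$; the order is immaterial for the final conclusion since we only need triviality of the $S_n$-component. The single substantive ingredient is the regularity of $U=\langle g_1,g_2\rangle$, which is exactly what kills the $S_n$-part of any element fixing a preference profile.
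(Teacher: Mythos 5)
Your proof is correct and is precisely the argument the paper has in mind: it states the lemma as an ``immediate consequence of the definition'' and omits the proof, which is exactly your computation that $p^{g_1}=p^{g_2}$ forces $g_2^{-1}g_1=(\varphi_2^{-1}\varphi_1,\psi_2^{-1}\psi_1)\in\mathrm{Stab}_{\langle g_1,g_2\rangle}(p)\subseteq S_h\times\{id\}$ by regularity. Your bookkeeping with the left-action convention ($ (p^{g_1})^{g_2^{-1}}=p^{g_2^{-1}g_1}$) is consistent with equation \eqref{azione}, so there is nothing to add.
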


We now recall two useful  main results from the literature.
\begin{theorem} {\rm\cite[Theorem $7$]{BG15}}\label{teofon}
$\mathcal{F}^U \neq \varnothing$ if and only if $U \in \mathcal{R}$. 
\end{theorem}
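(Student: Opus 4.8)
The plan is to prove the equivalence $\mathcal{F}^U \neq \varnothing \iff U \in \mathcal{R}$ in two directions, the hard direction being the construction of a $U$-symmetric {\sc spf} from a regular subgroup.

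First I would establish the easy implication: if $\mathcal{F}^U \neq \varnothing$, then $U \in \mathcal{R}$. Suppose $F \in \mathcal{F}^U$ and let $p \in \mathcal{P}$. If $g = (\varphi,\psi) \in \mathrm{Stab}_U(p)$, then $p^g = p$, so applying $U$-symmetry of $F$ gives $\psi F(p) = F(p^g) = F(p)$ inside $S_n$. Since $S_n$ is a group, cancellation yields $\psi = id$, hence $g \in S_h \times \{id\}$. As $p$ was arbitrary, $\mathrm{Stab}_U(p) \subseteq S_h \times \{id\}$ for all $p$, i.e.\ $U$ is regular.

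For the converse, assume $U \in \mathcal{R}$ and build $F \in \mathcal{F}^U$ explicitly. The natural approach is to fix a system of representatives $(p^j)_{j=1}^{R} \in \mathfrak{S}(U)$ of the $U$-orbits on $\mathcal{P}$, assign to each representative $p^j$ an arbitrary value $F(p^j) := \sigma_j \in S_n$ (for concreteness one may take $\sigma_j = id$, or any fixed linear order), and then extend $F$ to all of $\mathcal{P}$ by the rule forced by $U$-symmetry: for $p \in \mathcal{P}$, pick $g = (\varphi_g, \psi_g) \in U$ with $p = (p^j)^g$ for the unique $j$ with $p \in p^{j\,U}$, and set $F(p) := \psi_g\, \sigma_j$. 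The key step is to check this is well defined: if $(p^j)^{g_1} = p = (p^j)^{g_2}$ with $g_1, g_2 \in U$, then $p^j = (p^j)^{g_1^{-1} g_2}$ in the notation of Proposition \ref{prop:azio}, so $g_1^{-1} g_2 \in \mathrm{Stab}_U(p^j)$; by regularity $\pi_2(g_1^{-1} g_2) = id$, whence $\psi_{g_1} = \psi_{g_2}$ and the two formulas for $F(p)$ agree. (Alternatively one invokes Lemma \ref{banale-reg} directly: $\langle g_1, g_2\rangle \le U \in \mathcal{R}$ and $(p^j)^{g_1} = (p^j)^{g_2}$ force $\psi_{g_1} = \psi_{g_2}$.)

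Finally I would verify that the $F$ so constructed is genuinely $U$-symmetric. Let $p \in \mathcal{P}$ and $(\varphi,\psi) \in U$; write $p = (p^j)^g$ with $g \in U$ as above, so $F(p) = \psi_g \sigma_j$. By Proposition \ref{prop:azio}$(i)$, $p^{(\varphi,\psi)} = \big((p^j)^g\big)^{(\varphi,\psi)} = (p^j)^{(\varphi,\psi)g}$, and $(\varphi,\psi)g \in U$ with second projection $\psi \psi_g$; hence $F(p^{(\varphi,\psi)}) = (\psi \psi_g)\sigma_j = \psi(\psi_g \sigma_j) = \psi F(p)$, as required. This shows $F \in \mathcal{F}^U \neq \varnothing$. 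The only real obstacle is the well-definedness check in the previous paragraph, which is precisely where regularity of $U$ is used; everything else is routine bookkeeping with the action \eqref{azione}.
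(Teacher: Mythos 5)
Your proof is correct and follows essentially the same route as the source the paper cites: the forward direction is the standard cancellation argument showing stabilizers have trivial second projection, and the converse is exactly the orbit-representative construction that the paper imports as Theorem \ref{costruzione} (extend an arbitrary assignment on a system of representatives by $F(p):=\psi_g\sigma_j$, with regularity guaranteeing well-definedness). Nothing further is needed.
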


\begin{theorem}{\rm\cite[Propositions $3$ and $4$]{BG14}}
\label{costruzione}
Let  $U \in \mathcal{R}$, $(p^i)_{j=1}^{R} \in \mathfrak{S}(U)$ be a system of representatives for the $U$-orbits on  $\mathcal{P}$ and $\bold{q}=(q_i)_{j=1}^{R} \in \mathcal{L}(N)^{R}$. Then the following facts hold:
\begin{itemize}
\item[$(i)$] there exists a unique $F \in \mathcal{F}^{U}$ such that, for every $j \in [R]$, $F(p^j)=q_j$ holds.
\item[$(ii)$] Once denoted by $F_{\bold{q}}$  the function in $(i)$, the map 
 \[
 g: \mathcal{L}(N)^{R} \rightarrow \mathcal{F}^{U}, \quad  g(\bold{q})= F_{\bold{q}}
 \]
 is a bijection. In particular, $|\mathcal{F}^{U}|=n!^R$.

\end{itemize}
\end{theorem}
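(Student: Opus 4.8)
The plan is to build $F_{\mathbf{q}}$ by hand, orbit by orbit, transporting the prescribed value $q_j$ along the $U$-action, and then to verify in order: well-definedness, membership in $\mathcal{F}^U$, the interpolation identity $F_{\mathbf{q}}(p^j)=q_j$, and uniqueness. Part (ii) will then fall out formally.

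For (i), fix $p\in\mathcal{P}$. Since $(p^j)_{j=1}^R$ represents the $U$-orbits, there is a unique $j\in[R]$ with $p\in (p^j)^U$; I choose $g=(\varphi,\psi)\in U$ with $p=(p^j)^g$ and set $F_{\mathbf{q}}(p):=\psi\,q_j$. The crucial point is that this does not depend on the choice of $g$: if $p=(p^j)^{g_1}=(p^j)^{g_2}$ with $g_i=(\varphi_i,\psi_i)\in U$, then applying $g_2^{-1}$ and Proposition \ref{prop:azio} gives $(p^j)^{g_2^{-1}g_1}=p^j$, hence $g_2^{-1}g_1\in\mathrm{Stab}_U(p^j)\subseteq S_h\times\{id\}$ because $U\in\mathcal{R}$, so $\psi_2^{-1}\psi_1=\pi_2(g_2^{-1}g_1)=id$ and $\psi_1 q_j=\psi_2 q_j$. (Equivalently, apply Lemma \ref{banale-reg}, noting that $\langle g_1,g_2\rangle\le U$ is regular by Proposition \ref{Rsubclosed}.) This well-definedness step is the only place regularity is used, and I expect it to be the heart of the argument: without \eqref{property} the formula $\psi\,q_j$ is genuinely ambiguous.

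Next I would check $F_{\mathbf{q}}\in\mathcal{F}^U$. Given $p\in\mathcal{P}$ and $(\varphi,\psi)\in U$, write $p=(p^j)^{g'}$ with $g'=(\varphi',\psi')\in U$; then $p^{(\varphi,\psi)}=(p^j)^{(\varphi,\psi)g'}$ by Proposition \ref{prop:azio}, and since $(\varphi,\psi)g'=(\varphi\varphi',\psi\psi')$ lies in $U$, we obtain $F_{\mathbf{q}}(p^{(\varphi,\psi)})=\psi\psi'\,q_j=\psi\,F_{\mathbf{q}}(p)$. Taking $g=(id,id)$ in the definition also gives $F_{\mathbf{q}}(p^j)=q_j$ for every $j\in[R]$. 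Uniqueness is then immediate: if $F,F'\in\mathcal{F}^U$ both agree with $\mathbf{q}$ on the representatives, then for any $p=(p^j)^{(\varphi,\psi)}$ one has $F(p)=\psi F(p^j)=\psi q_j=\psi F'(p^j)=F'(p)$, so $F=F'$.

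For (ii), injectivity of $g$ follows since $F_{\mathbf{q}}=F_{\mathbf{q}'}$ forces $q_j=F_{\mathbf{q}}(p^j)=F_{\mathbf{q}'}(p^j)=q'_j$ for every $j$; surjectivity follows since, for $F\in\mathcal{F}^U$, the tuple $\mathbf{q}:=(F(p^j))_{j=1}^R$ makes $F_{\mathbf{q}}$ and $F$ coincide on all representatives, hence equal by the uniqueness just shown. Thus $g$ is a bijection, and counting gives $|\mathcal{F}^U|=|\mathcal{L}(N)^R|=n!^R$. Everything beyond the well-definedness step is a routine transport-of-structure computation with the action identity \eqref{azione}.
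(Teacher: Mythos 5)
Your proof is correct, and the key point is exactly right: regularity of $U$ (via $\mathrm{Stab}_U(p^j)\subseteq S_h\times\{id\}$, or equivalently Lemma \ref{banale-reg}) is what makes the assignment $F_{\mathbf{q}}(p):=\psi\,q_j$ independent of the chosen $(\varphi,\psi)\in U$ with $p=(p^j)^{(\varphi,\psi)}$, after which $U$-symmetry, interpolation, uniqueness and the bijection in (ii) are routine computations with \eqref{azione}. The paper itself states this theorem without proof, citing \cite[Propositions 3 and 4]{BG14}, and your argument is the standard one underlying that reference.
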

\subsection{The class $\mathcal{R}$}
The group theoretical characterization of regularity comes as a particular
case of  \cite[ Theorem $12$]{BG15}.

\begin{theorem} \label{Caratterizzazione}
Let $U \leq G$. The following facts are equivalent:
\begin{enumerate}
\item[$(a)$]$U \in \mathcal{R}$;
\item[$(b)$] for every $(\varphi,\psi) \in U$ with $\psi \neq id$ and $r$ a prime number such that  $|\psi|_{r}=r^a$ for some $a \in \mathbb{N}$, we have $r^a \nmid \gcd(T_\varphi)$.
\end{enumerate}
\end{theorem}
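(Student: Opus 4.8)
The plan is to translate the regularity condition \eqref{property} into a statement about single elements of $U$ and then analyze when a cyclic group $\langle(\varphi,\psi)\rangle$ fails to be regular. First I would observe that, by Proposition \ref{Rsubclosed}, $U\in\mathcal{R}$ if and only if $\langle g\rangle\in\mathcal{R}$ for every $g=(\varphi,\psi)\in U$; indeed $\mathcal{R}$ is subgroup-closed, and conversely if some $g$ generates a non-regular cyclic subgroup then $U$ itself is non-regular. Hence it suffices to characterize when $\langle(\varphi,\psi)\rangle\in\mathcal{R}$. Unwinding the definition: $\langle(\varphi,\psi)\rangle\notin\mathcal{R}$ means there is a profile $p$ and an exponent $k$ with $p^{(\varphi^k,\psi^k)}=p$ and $\psi^k\neq id$. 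So I must understand, for a fixed $g=(\varphi,\psi)$, whether there exists $p\in\mathcal{P}$ fixed by some power of $g$ having nontrivial second component.

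Next I would reduce to a prime-power situation. If $p$ is fixed by $(\varphi^k,\psi^k)$ with $\psi^k\neq id$, pick a prime $r$ dividing $o(\psi^k)$ and replace $k$ by a suitable multiple so that $\psi^k$ has order exactly $r$ (raising to the cofactor of the $r$-part); the profile $p$ is still fixed. Thus $\langle(\varphi,\psi)\rangle\notin\mathcal{R}$ iff there is a prime $r$, a power $g^k=(\varphi^k,\psi^k)$ with $o(\psi^k)=r$, and a profile fixed by $g^k$. Now I analyze the fixed-point condition explicitly using the action formula $(p^{(\alpha,\beta)})_i=\beta\, p_{\alpha^{-1}(i)}$: $p$ is fixed by $(\alpha,\beta)$ exactly when $p_{\alpha(i)}=\beta\,p_i$ for all $i$, i.e. the second component $\beta$ is constant along each $\langle\alpha\rangle$-orbit of individuals in the sense that going around an orbit of length $\ell$ forces $\beta^{\ell}p_i=p_i$, hence $\beta^{\ell}=id$. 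Therefore a fixed profile for $(\alpha,\beta)$ exists iff $o(\beta)$ divides $\gcd(T_{\alpha})$ (for each orbit length $\ell$ one needs $o(\beta)\mid\ell$; conversely if $o(\beta)\mid\gcd(T_\alpha)$ one builds $p$ orbit-by-orbit, choosing $p_i$ freely on one representative of each $\langle\alpha\rangle$-orbit and propagating).

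Combining: writing $|\psi|_r=r^a$, taking $k=o(\psi)/r^{a}\cdot r^{a-1}$ gives $\psi^k$ of order $r$, and $T_{\varphi^k}$ relates to $T_\varphi$ — here I'd use that $\gcd(T_{\varphi^k})$ is controlled by $\gcd(T_\varphi)$; in fact the relevant clean statement is that a profile fixed by some power of $g$ with second component of order $r$ exists iff $r^{a}\mid\gcd(T_\varphi)$, obtained by tracking orbit lengths of $\varphi$ and $\varphi^k$ carefully. Putting $(a)$ and $(b)$ together then amounts to: $U\in\mathcal{R}$ iff for no $(\varphi,\psi)\in U$ with $\psi\neq id$ and no prime $r$ with $|\psi|_r=r^a$ does $r^a$ divide $\gcd(T_\varphi)$, which is exactly $(b)$. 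I would note that this is the promised specialization of \cite[Theorem 12]{BG15}, so in the paper one may simply cite that theorem and spell out how the general statement collapses to this form; the only real work is the bookkeeping passing from a general power $g^k$ to the prime-order normalization of $\psi^k$ and matching $r^a$ against $\gcd(T_\varphi)$ rather than $\gcd(T_{\varphi^k})$. The main obstacle is precisely this last matching: ensuring that the divisibility threshold is the full $r$-part $r^a$ of $|\psi|$ and not merely $r$, which requires choosing the exponent $k$ so that $\varphi^k$ retains enough of the orbit structure of $\varphi$ while $\psi^k$ is cut down to order $r$.
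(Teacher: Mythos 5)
Your proposal is correct, but it is worth noting that the paper does not actually prove this theorem: it simply observes that the statement is a particular case of \cite[Theorem 12]{BG15} and cites that result. Your argument is therefore a genuine self-contained derivation, and it is sound. The two pillars are exactly right: (1) regularity of $U$ only depends on single elements, so it suffices to decide, for each $g=(\varphi,\psi)\in U$, whether some power of $g$ with nontrivial second component fixes a profile; (2) the fixed-point lemma that $(\alpha,\beta)$ fixes some $p\in\mathcal{P}$ if and only if $o(\beta)\mid\gcd(T_\alpha)$, which follows from $p_{\alpha(i)}=\beta p_i$ by going around each $\langle\alpha\rangle$-orbit and by the orbit-by-orbit construction in the converse. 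The step you flag as the main obstacle does go through, and in fact the forward direction needs no power trick at all: if $g$ itself fixes $p$ with $\psi\neq id$, then $o(\psi)\mid\gcd(T_\varphi)$ and hence $r^a=|\psi|_r$ divides $\gcd(T_\varphi)$ for every prime $r$ dividing $o(\psi)$, which is already the negation of $(b)$ witnessed by $g$. For the converse, given $r^a\mid\gcd(T_\varphi)$, the cleanest choice is $k=o(\psi)/r^a$, which is coprime to $r$; then $o(\psi^k)=r^a$ while each orbit length $\ell$ of $\varphi$ becomes $\ell/\gcd(\ell,k)$ with unchanged $r$-part, so $r^a=o(\psi^k)$ divides $\gcd(T_{\varphi^k})$ and $g^k$ fixes a profile. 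Your normalization to $o(\psi^k)=r$ also works (the exponent then has $r$-part exactly $r^{a-1}$, so each $\ell_r\geq r^a$ drops to $\ell_r/r^{a-1}\geq r$), but it forces precisely the extra bookkeeping you describe; matching against $r^a$ rather than $r$ avoids it.
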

The above theorem makes the concept of regularity easy to manage and allows  to distinguish regular subgroups  from non-regular subgroups. For instance, $A_4 \times S_2 \notin \mathcal{R}$ because  $((12)(34),(12))\in A_4 \times S_2$ and, since $T_{(12)(34)}=[2,2]$, we have $2=|(12)| \mid \gcd[2,2]=2$. We now exhibit an interesting wide family of regular subgroups.

\begin{corollary}\label{reg-sub-Sn} Let $W\leq S_n$. If $V \leq S_{[h]\setminus\{i\}}$, for some $i\in [h]$, then $U=V \times W\in \mathcal{R}$. In particular
 $\lbrace id \rbrace \times W\in \mathcal{R}$.
\end{corollary}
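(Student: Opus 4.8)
The plan is to verify condition $(b)$ of Theorem \ref{Caratterizzazione} for the group $U = V \times W$, where $V \leq S_{[h]\setminus\{i\}}$ for some fixed $i \in [h]$. So fix an arbitrary element $(\varphi,\psi) \in U$ with $\psi \neq id$, and let $r$ be a prime with $|\psi|_r = r^a$ for some $a \in \mathbb{N}$ (in particular $a \geq 1$, since $\psi \neq id$ forces some prime to divide $|\psi|$). We must show $r^a \nmid \gcd(T_\varphi)$.

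The key observation is that $\varphi \in V \leq S_{[h]\setminus\{i\}}$, so $\varphi$ fixes the point $i \in [h]$. Hence the type $T_\varphi$ contains a part equal to $1$ (coming from the fixed point $i$, and possibly more). Therefore $\gcd(T_\varphi) = 1$. Since $r^a \geq r \geq 2 > 1$, we conclude $r^a \nmid \gcd(T_\varphi) = 1$. As $(\varphi,\psi)$ was arbitrary, condition $(b)$ holds, and Theorem \ref{Caratterizzazione} gives $U \in \mathcal{R}$.

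For the last sentence, taking $V = \{id\} \leq S_{[h]\setminus\{i\}}$ (the trivial group, which is certainly a subgroup of $S_{[h]\setminus\{i\}}$ for any $i \in [h]$) immediately yields $\{id\} \times W \in \mathcal{R}$ as a special case. Alternatively, one notes that $\{id\} \times W \leq V \times W$ and invokes Proposition \ref{Rsubclosed}, since $\mathcal{R}$ is subgroup-closed.

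There is essentially no obstacle here: the entire content is the remark that an element of $S_{[h]\setminus\{i\}}$, viewed inside $S_h$, has $i$ as a fixed point, so its cycle type has $1$ in it and its $\gcd$ is $1$. The only point requiring a small amount of care is making the identification of $S_{[h]\setminus\{i\}}$ with a subgroup of $S_h$ explicit (elements act as the identity on $i$), so that the type $T_\varphi$ in the statement of Theorem \ref{Caratterizzazione} is computed in $S_h$ and genuinely records the fixed point $i$.
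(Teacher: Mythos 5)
Your proof is correct and follows exactly the paper's argument: apply Theorem \ref{Caratterizzazione}, note that $\varphi$ fixes $i$ so $T_\varphi$ contains a part equal to $1$ and $\gcd(T_\varphi)=1$, hence $r^a\nmid\gcd(T_\varphi)$, and obtain the last claim by taking $V=\{id\}\leq S_{[h]\setminus\{i\}}$.
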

\begin{proof} Let $(\varphi,\psi) \in U$ with $\psi \neq id$ and $r$ be a prime number such that  $|\psi|_{r}=r^a$ for some $a \in \mathbb{N}$. Since $\varphi(i)=i$ we have that $T_{\varphi}$ contains a term equal to $1$ and thus $\gcd(T_{\varphi})=1$ so that we surely have $r^a \nmid \gcd(T_{\varphi})$.
Thus, by Theorem \ref{Caratterizzazione}, we deduce that $U$ is  regular. The second part follows immediately because $\lbrace id \rbrace  \leq S_{[h]\setminus\{i\}}$.
\end{proof}

We also recall Lemma 17 in \cite{BG15}. It gives a necessary and sufficient condition for the regularity of the whole group $G=S_h\times S_n$.
\begin{proposition}\label{lemma17} The group $G$ is regular if and only if $\gcd(h,n!)=1.$
\end{proposition}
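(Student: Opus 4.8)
The plan is to derive the statement directly from the group-theoretical characterization of regularity given in Theorem \ref{Caratterizzazione}, applied to the full group $U=G$. That equivalence reduces the regularity of $G$ to an arithmetic condition on the pairs $(\varphi,\psi)\in S_h\times S_n$, and the two implications are handled by opposite strategies: for one direction one must verify the condition for \emph{every} such pair, while for the other it suffices to exhibit a single offending pair.

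For the implication $\gcd(h,n!)=1\ \Rightarrow\ G\in\mathcal{R}$, I would fix an arbitrary $(\varphi,\psi)\in G$ with $\psi\neq id$ together with a prime $r$ such that $|\psi|_r=r^a$ for some $a\in\mathbb{N}$, and check condition $(b)$ of Theorem \ref{Caratterizzazione}. The two key elementary facts are: $(i)$ since $\psi\in S_n$, Lagrange's theorem gives $o(\psi)\mid n!$, hence $r^a\mid o(\psi)\mid n!$ and in particular $r\mid n!$, so that the coprimality hypothesis forces $r\nmid h$; $(ii)$ the parts of $T_\varphi$ sum to $h$, so $\gcd(T_\varphi)\mid h$ and therefore $r\nmid\gcd(T_\varphi)$, and a fortiori $r^a\nmid\gcd(T_\varphi)$. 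As the pair was arbitrary, condition $(b)$ holds throughout and $G\in\mathcal{R}$.

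For the converse I would argue by contraposition. Assuming $\gcd(h,n!)\neq 1$, pick a prime $r$ dividing both $h$ and $n!$. Since $r$ is prime and $r\mid n!$, we get $r\le n$, so $S_n$ contains an $r$-cycle $\psi$, for which $|\psi|_r=r=r^1$. Since $r\mid h$, one can split $[h]$ into $h/r$ blocks of size $r$ and take $\varphi\in S_h$ to be the product of $h/r$ disjoint $r$-cycles supported on these blocks, so that $T_\varphi=[r,\dots,r]$ and $\gcd(T_\varphi)=r$. Then the pair $(\varphi,\psi)\in G$ violates condition $(b)$ of Theorem \ref{Caratterizzazione}, since $r^1\mid r=\gcd(T_\varphi)$; hence $G\notin\mathcal{R}$.

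There is no genuine obstacle: essentially all the content has been absorbed into Theorem \ref{Caratterizzazione}, and what remains is the routine number theory $o(\psi)\mid n!$ and $\gcd(T_\varphi)\mid h$, together with the standard existence of an $r$-cycle in $S_k$ whenever the prime $r$ satisfies $r\le k$. The only point deserving a moment's care is ensuring that, in the forward direction, the verification is carried out uniformly over all pairs $(\varphi,\psi)$ and all admissible primes $r$, rather than over a convenient special choice.
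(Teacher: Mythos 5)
Your argument is correct. Note that the paper itself offers no proof of this statement: it is recalled verbatim as Lemma 17 of \cite{BG15}, so there is nothing internal to compare against. Your derivation from Theorem \ref{Caratterizzazione} is sound and self-contained: in the forward direction, $o(\psi)\mid n!$ forces $r\mid n!$, hence $r\nmid h$, and since the parts of $T_\varphi$ sum to $h$ one gets $\gcd(T_\varphi)\mid h$, so $r^a\nmid\gcd(T_\varphi)$; in the converse, the pair consisting of an $r$-cycle in $S_n$ and a product of $h/r$ disjoint $r$-cycles in $S_h$ (available because $r\le n$ and $r\mid h$) violates condition $(b)$. This is exactly the natural route the paper's own machinery suggests, and no step is missing.
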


\noindent  We have already noticed that  $\mathcal{R}$ is subgroup-closed. Using Theorem \ref{Caratterizzazione} and recalling that  conjugation preserves the type and the order of a permutation,  the following proposition is easily proved. We omit the proof for the sake of brevity. 
 \begin{proposition}\label{R-conj-closed}
 The class $\mathcal{R}$ is conjugacy-closed.
 \end{proposition}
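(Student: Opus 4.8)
The goal is to show that $\mathcal{R}$ is conjugacy-closed: if $U \in \mathcal{R}$ and $g \in G$, then $U^g \in \mathcal{R}$. The plan is to verify condition $(b)$ of Theorem \ref{Caratterizzazione} directly for $U^g$, exploiting the fact that conjugation in a symmetric group preserves both the cycle type and the order of a permutation. Write $g = (\varphi_g, \psi_g) \in G = S_h \times S_n$. A generic element of $U^g$ has the form $(\varphi,\psi)^g = (\varphi^{\varphi_g}, \psi^{\psi_g})$ for some $(\varphi,\psi) \in U$, since conjugation in the direct product $G$ acts componentwise. The key observation is that $T_{\varphi^{\varphi_g}} = T_\varphi$ and $o(\psi^{\psi_g}) = o(\psi)$, hence also $|\psi^{\psi_g}|_r = |\psi|_r$ for every prime $r$, because conjugate permutations have the same type and the same order (as recalled in Section \ref{sec-one}).

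First I would fix an arbitrary element $(\varphi',\psi') \in U^g$ with $\psi' \neq id$ and a prime $r$ with $|\psi'|_r = r^a$ for some $a \in \mathbb{N}$, and write $(\varphi',\psi') = (\varphi^{\varphi_g}, \psi^{\psi_g})$ with $(\varphi,\psi) \in U$. Since $\psi' = \psi^{\psi_g}$ and conjugation preserves order, $\psi \neq id$ and $|\psi|_r = |\psi'|_r = r^a$. Because $U \in \mathcal{R}$, Theorem \ref{Caratterizzazione} applied to $U$ gives $r^a \nmid \gcd(T_\varphi)$. Then, since $T_{\varphi'} = T_{\varphi^{\varphi_g}} = T_\varphi$, we get $\gcd(T_{\varphi'}) = \gcd(T_\varphi)$, so $r^a \nmid \gcd(T_{\varphi'})$. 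As $(\varphi',\psi')$, $r$ and $a$ were arbitrary subject to the stated constraints, condition $(b)$ of Theorem \ref{Caratterizzazione} holds for $U^g$, whence $U^g \in \mathcal{R}$.

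There is no real obstacle here: the entire argument is a bookkeeping exercise that reduces the regularity of $U^g$ to the regularity of $U$ via the conjugation-invariance of type and order. The only point requiring a moment's care is the componentwise description of conjugation in the direct product $G = S_h \times S_n$, i.e. that $(\varphi,\psi)^{(\varphi_g,\psi_g)} = (\varphi^{\varphi_g}, \psi^{\psi_g})$, which is immediate from the definition of conjugation and the product structure of $G$. This is precisely why the paper states the proof may be omitted for brevity; the plan above is exactly the omitted verification.
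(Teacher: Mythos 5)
Your proof is correct and follows exactly the route the paper indicates: it verifies condition $(b)$ of Theorem \ref{Caratterizzazione} for $U^g$ using the componentwise description of conjugation in $G$ and the invariance of cycle type and order under conjugation. This is precisely the verification the paper omits "for the sake of brevity," so there is nothing to add.
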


A main obstacle in building $ \mathcal{R}$ by the classic group theoretical operations is given by the following observation.
\begin{remark}\label{no-gen} The subgroup generated by two regular subgroups is not, in general, regular. 
\end{remark}
\begin{proof}
Consider the voting pair $(h,n)=(3,3)$ and the subgroups of  $G=S_3 \times S_3$ given by 
$V=\langle(123)\rangle \times \lbrace id \rbrace$ and $W=\lbrace id \rbrace \times \langle(123)\rangle$.
$V \in \mathcal{R}$ because it is  included in $S_3 \times \{ id \}$ while  $W \in \mathcal{R}$ by Corollary \ref{reg-sub-Sn}. However $\langle V,W \rangle= \langle (123) \rangle \times \langle (123) \rangle$
is not regular by Theorem \ref{Caratterizzazione}.
\end{proof}

\section{The symmetry groups associated with a {\sc spf} }\label{groups-asso}

We introduce now the main definitions of the paper.
\begin{definition} \label{def_gruppidisimmetria}
\noindent {\rm Let $F \in \mathcal{F}$. We define 
\begin{itemize}
\item[1)] the {\it anonymity group} of $F$ by
\[
G_1(F):= \big \lbrace (\varphi,id) \in G \mid F(p^{(\varphi,id)})=F(p), \quad \forall \, \, p \in \mathcal{P} \big \rbrace;
\]
\item[2)] the {\it neutrality group} of $F$ by
\[
G_2(F):= \big \lbrace (id,\psi) \in G \mid F(p^{(id,\psi)})= \psi F(p), \quad \forall \, \, p \in \mathcal{P} \big \rbrace;
\]
\item[3)] the {\it symmetry group} of $F$ by
\[
G(F):= \big \lbrace (\varphi,\psi) \in G \mid F(p^{(\varphi,\psi)})= \psi F(p), \quad \forall \, \, p \in \mathcal{P} \big \rbrace.
\]
\end{itemize}
We globally refer to the groups $G_1(F),\ G_2(F), G(F)$ as to the {\it symmetry groups} of  the {\sc spf} $F$.}
\end{definition}

We give a couple of examples.  Let $D_i$ be the dictatorship with dictator $i\in H$, that is the {\sc spf}  defined by $D_i(p)=p_i$ for all $p\in \mathcal{P}$.
Then we have
\begin{equation}\label{Di}
G_1(D_i)=\big \lbrace (\varphi,id) \in G \mid \varphi(i)=i \big \rbrace \times \{id\}\cong S_{H\setminus\{i\}}\times \{id\} , \qquad G_2(D_i)=\{id\} \times S_n,
\end{equation}
and $G(D_i)=S_{H\setminus\{i\}}\times S_n.$
Note that the group $S_{H\setminus\{i\}}\times \{id\}$ is maximal in $S_h\times \{id\}$ and thus the anonymity level of a dictatorship is very high. By similar considerations the symmetry level of a dictatorship is very high too. 

For the majority {\sc spf} $M$ considered in  Example \ref{majority}, we  have a different scenario depending on the parity of $h$.
If $h$ is even, then
$$G_1(M)=S_h\times\{id\}=G(M), \qquad G_2(M)=\{id\}\times \{id\}.$$
If $h$ is odd, then 
$$G_1(M)=S_h\times\{id\}, \qquad G_2(M)=\{id\}\times S_2, \qquad G(M)=S_h\times S_2.$$
By the above results we see that $M$ is anonymous and neutral if and only if $h$ is odd.

Note that, for every {\sc spf} $F$,  we obviously have
 \begin{equation}\label{ovvia}
G_1(F)= G(F) \cap \big(S_h \times \lbrace id \rbrace\big) , \quad G_2(F)= G(F) \cap \big(\lbrace id \rbrace \times S_n\big).
 \end{equation}

As said in the introduction, for Definition \ref{def_gruppidisimmetria} we are inspired to \cite{Kelly91}. However,  Kelly proposes only a separate study of anonymity groups and neutrality groups and does not define the symmetry group. Since nowadays, by \cite{BG15},  the presence of a group action is fully acknowledged, it is instead very natural and fruitful to treat these three concepts in a unitary form and focus on the relations among them. 
\begin{proposition}\label{legami}
Let  $F \in \mathcal{F}$. Then the following facts hold:
\begin{itemize}
\item[$(i)$] $G(F)$ is a subgroup of  $G$.
\item[$(ii)$]  $G_1(F)$ and $G_2(F)$ are subgroups of $G(F)$ with $G_1(F)\leq S_h \times \{id\}$ and $G_2(F)\leq  \{id\} \times S_n.$ 
 \item[$(iii)$]  $\langle G_1(F), G_2(F) \rangle = G_1(F) \times G_2(F) \leq G(F)$ and the inclusion is generally proper. 
 \item[$(iv)$] $G(F)=G_1(F) \times  G_2(F)$ if and only if $G(F) \in \mathcal{D}_G$.
\item[$(v)$] If $G(F) \leq S_h \times \{id\}$ then $G(F)=G_1(F)$. If $G(F) \leq \{id\} \times S_n$, then $G(F)=G_2(F)$.
 \end{itemize}
\end{proposition}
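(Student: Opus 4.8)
All five assertions follow from the definitions together with the already-recorded equalities \eqref{ovvia}, once part $(i)$ is available; the one place that needs an actual construction is the strictness claim in $(iii)$, which I regard as the main (mild) obstacle.

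\emph{Parts $(i)$ and $(ii)$.} I would prove $(i)$ by a direct check of the subgroup axioms. Clearly $(id,id)\in G(F)$. If $g_1=(\varphi_1,\psi_1)$ and $g_2=(\varphi_2,\psi_2)$ lie in $G(F)$, then for every $p\in\mathcal P$ the action identity \eqref{azione} of Proposition~\ref{prop:azio} gives $F(p^{g_1g_2})=F\big((p^{g_2})^{g_1}\big)=\psi_1F(p^{g_2})=\psi_1\psi_2F(p)$, so $g_1g_2\in G(F)$; since $G$ is finite this already makes $G(F)$ a subgroup, and one also gets $g^{-1}\in G(F)$ by applying the defining relation of $G(F)$ to the profile $p^{g^{-1}}$. (Equivalently one checks that $g\in G(F)$ iff $F\in\mathcal F^{\langle g\rangle}$---the nontrivial implication being a one-line induction on the powers of $g$ via \eqref{azione}---and then closure follows from $\mathcal F^{\langle g_1\rangle}\cap\mathcal F^{\langle g_2\rangle}=\mathcal F^{\langle g_1,g_2\rangle}\subseteq\mathcal F^{\langle g_1g_2\rangle}$, Proposition~\ref{generato_intersezione}.) Part $(ii)$ is then immediate: by $(i)$ and \eqref{ovvia}, $G_1(F)=G(F)\cap(S_h\times\{id\})$ and $G_2(F)=G(F)\cap(\{id\}\times S_n)$ are intersections of subgroups of $G$, and the two displayed inclusions are part of their definitions.

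\emph{Part $(iii)$.} Write $G_1(F)=A\times\{id\}$ with $A\le S_h$ and $G_2(F)=\{id\}\times B$ with $B\le S_n$. Since $S_h\times\{id\}$ and $\{id\}\times S_n$ commute elementwise, meet trivially, and generate $G$, one has $\langle G_1(F),G_2(F)\rangle=G_1(F)\,G_2(F)=A\times B$, an internal direct product lying in $\mathcal D_G$, and $A\times B\le G(F)$ by $(ii)$. By $(iv)$ below, strictness of this inclusion is equivalent to $G(F)\notin\mathcal D_G$, so it is enough to exhibit one such {\sc spf}. Take $(h,n)=(3,2)$; since $\gcd(3,2!)=1$, the group $G=S_3\times S_2$ is regular by Proposition~\ref{lemma17}, hence so is every subgroup. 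Let $U=\langle((12),(12))\rangle\le G$; then $U\notin\mathcal D_G$ since $\pi_1(U)\times\pi_2(U)$ has order $4\ne|U|$. As $U$ is regular and its nonidentity element has nontrivial second component, $U$ acts freely on $\mathcal P=(S_2)^3$, so $R(U)=|\mathcal P|/|U|=4$, whereas a direct count gives $R(\langle(12)\rangle\times S_2)=3$; hence by Theorem~\ref{costruzione} $|\mathcal F^U|=2^4>2^3=|\mathcal F^{\langle(12)\rangle\times S_2}|$, so some $F\in\mathcal F^U$ is not $\langle(12)\rangle\times S_2$-symmetric. A look at the subgroup lattice of $G$ shows the overgroups of $U$ are $U$, $\langle(12)\rangle\times S_2$, the graph of the sign homomorphism $S_3\to S_2$, and $G$, of which only the second and the fourth lie in $\mathcal D_G$; since both contain $\langle(12)\rangle\times S_2$, we conclude $G(F)\notin\mathcal D_G$, and by $(iv)$ the inclusion $\langle G_1(F),G_2(F)\rangle\le G(F)$ is strict for this $F$.

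\emph{Parts $(iv)$ and $(v)$.} If $G(F)=G_1(F)\times G_2(F)$, then by the forms of $G_1(F)$ and $G_2(F)$ recorded above $G(F)=A\times B\in\mathcal D_G$. Conversely, if $G(F)=\pi_1(G(F))\times\pi_2(G(F))\in\mathcal D_G$, then intersecting with $S_h\times\{id\}$ and with $\{id\}\times S_n$ and using \eqref{ovvia} gives $G_1(F)=\pi_1(G(F))\times\{id\}$ and $G_2(F)=\{id\}\times\pi_2(G(F))$, whence $G_1(F)\times G_2(F)=\pi_1(G(F))\times\pi_2(G(F))=G(F)$; this proves $(iv)$. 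For $(v)$: if $G(F)\le S_h\times\{id\}$ then $G(F)\cap(S_h\times\{id\})=G(F)$, so $G_1(F)=G(F)$ by \eqref{ovvia}; symmetrically $G(F)\le\{id\}\times S_n$ forces $G_2(F)=G(F)$. Apart from the explicit example in $(iii)$, the whole argument is routine verification.
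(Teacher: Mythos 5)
Your proof is correct, and parts $(i)$, $(ii)$, $(iv)$ and $(v)$ follow the paper essentially verbatim: closure of $G(F)$ via \eqref{azione} plus finiteness, intersections via \eqref{ovvia}, and the routine computation of $G_1(F)$ and $G_2(F)$ when $G(F)\in\mathcal{D}_G$. The one genuine divergence is in how you certify strictness in $(iii)$. You use the same witness as the paper, $U=\langle((12),(12))\rangle$ for the voting pair $(3,2)$, but the paper argues by contradiction that some $F\in\mathcal{F}^U$ has $G(F)=U$ exactly: it asserts that the only proper overgroups of $U$ are $V=\langle((12),id),(id,(12))\rangle$ and $G$, deduces $\mathcal{F}^U=\mathcal{F}^V$ from the contradiction hypothesis, and refutes this by exhibiting a single $V$-orbit that splits into several $U$-orbits, giving $R_V<R_U$. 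You instead compute $R_U=4$ and $R_V=3$ outright (your free-action argument for $R_U$ is valid, since regularity forces $\mathrm{Stab}_U(p)\leq U\cap(S_h\times\{id\})=1$), conclude $|\mathcal{F}^U|>|\mathcal{F}^V|$, and then, rather than pinning down $G(F)$, enumerate all overgroups of $U$ and observe that every one lying in $\mathcal{D}_G$ contains $V$, so any $F\in\mathcal{F}^U\setminus\mathcal{F}^V$ has $G(F)\notin\mathcal{D}_G$; part $(iv)$ then converts this into strictness. Your route costs a full subgroup-lattice enumeration and a (harmless, non-circular) appeal to $(iv)$, but it is more robust on one point: your list of overgroups correctly includes the graph of the sign homomorphism $S_3\to S_2$, a proper overgroup of $U$ of order $6$ that the paper's case analysis omits when it claims $V$ and $G$ are the only ones (the paper's conclusion survives, since $\mathcal{F}^V\cup\mathcal{F}^D\cup\mathcal{F}^G$ still has only $8<16$ elements, but its stated argument is incomplete as written). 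It also buys a weaker but sufficient conclusion, $G(F)\notin\mathcal{D}_G$ rather than $G(F)=U$, which is all that $(iii)$ actually requires.
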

\begin{proof}
 $(i)$ Surely $(id,id)\in G(F)$ so that  $G(F) \neq \varnothing$. If $(\varphi_1,\psi_1),(\varphi_2,\psi_2)\in G(F)$ and  $p \in \mathcal{P}$, by \eqref{azione}  we have,
\[
F(p^{(\varphi_1 \varphi_2,\psi_1\psi_2)})=F((p^{(\varphi_2,\psi_2)})^{(\varphi_1,\psi_1)})=\psi_1F(p^{(\varphi_2,\psi_2)})=\psi_1\psi_2F(p).
\] 
 Thus $(\varphi_1,\psi_1)(\varphi_2,\psi_2)=(\varphi_1\varphi_2,\psi_1 \psi_2) \in G(F)$. Since we are dealing with finite groups this guarantees that  $G(F)$ is a subgroup of $G$. 
 \vspace{2mm}
 
 \noindent$(ii)$  Since by $(i)$ we have $G(F)\leq G$ and the intersection of subgroups is a subgroup,  the fact that $G_1(F)\leq G(F)$ and $G_2(F) \leq G(F)$ follow immediately by \eqref{ovvia}.
 \vspace{2mm}
 
 \noindent$(iii)$ The groups $S_h \times \lbrace id \rbrace$ and $\lbrace id \rbrace \times S_n$ permute element by element and have trivial intersection. Thus, by \eqref{ovvia}, the same holds for $G_1(F)$ and $G_2(F)$. Hence $\langle G_1(F), G_2(F) \rangle = G_1(F) G_2(F)=G_1(F) \times G_2(F).$

 We now give an example  of a proper containment between $G(F)$ and $G_1(F) \times  G_2(F)$. Consider the voting pair $(h,n)=(3,2)$ and $U:=\langle(\sigma,\sigma)\rangle$, where $\sigma:=(12).$ Since, by Proposition \ref{lemma17},  $G$ is regular, every subgroup of $G$ is regular and thus $U$ is regular. Thus, by Theorem \ref{teofon}, $\mathcal{F}^U\neq \varnothing.$
 We show that $U $ is the symmetry group of some $F \in \mathcal{F}^U$.
 Assume, by contradiction, that $G(F)>U$ for all $F \in \mathcal{F}^U$. Since the only subgroups containing properly $U$ are $V= \langle (\sigma,id),(id,\sigma) \rangle$ and $ G$ itself, we have $\mathcal{F}^U=\mathcal{F}^G \cup \mathcal{F}^V$. But since $\mathcal{F}^G\subseteq  \mathcal{F}^V$ we then have $\mathcal{F}^U=\mathcal{F}^V$ and thus $|\mathcal{F}^U|=|\mathcal{F}^V|$. By Theorem \ref{costruzione}, this means $2^{R_U}=2^{R_V}$ and hence $R_U=R_V$.
We show that  instead we have $R_V<R_U$. Indeed consider the preference profile $p:=(\sigma, id, \sigma)\in \mathcal{P}$ and note that $p^U=\{p, (\sigma, id, id)\}$ while $p^V=\{p, (\sigma, id, id),(id, \sigma, id), (id, \sigma,\sigma)\}$. Hence at least the $V$-orbit $p^V$ is union of more than one $U$-orbit, which implies $R_V<R_U$.
 Thus there exists $F\in \mathcal{F}$ such that $U=G(F).$
By \eqref{ovvia}, we then have $G_1(F)=\{id\}\times \{id\}=G_2(F)$ and thus  also $G_1(F) \times  G_2(F)=\{id\}\times \{id\}<U.$
 \vspace{2mm}
 
 \noindent$(iv)$ Assume that  $G(F)=G_1(F) \times G_2(F)$. Then, clearly, we also have that $G(F)=\pi_1(G_1(F)) \times \pi_2(G_2(F))$ and hence $G(F)\in \mathcal{D}_G.$
 
Conversely assume that $G(F)=V \times W\in \mathcal{D}_G$ for some $V \leq S_h$ and $W \leq S_n$. We show that  $G_1(F)=V \times \{ id \}$. Let  $(\varphi,id) \in G_1(F)$. Since $
G_1(F)\leq G(F)$, we have that $(\varphi,id) \in V \times W$ and thus $\varphi \in V$, so that  $(\varphi,id) \in V \times \{ id \}$. On the other hand, if $(\varphi,id) \in V \times \{ id \}$ we also have $(\varphi,id) \in G(F)$  which, by the definition of symmetry group, implies $(\varphi,id) \in G_1(F)$. Analogously one show that  $G_2(F)= \{id \} \times S_n$. It follows that $G(F)=V \times W = (V \times \{id \}) \times (\{ id \} \times W)=G_1(F) \times G_2(F).$ 
\vspace{2mm}

\noindent$(v)$ Those facts follow immediately by \eqref{ovvia}.
\end{proof}

An important consequence of Proposition \ref{legami} $(iii)$ is that the symmetry group $G(F)$ is not generally equal to the product $G_1(F) \times G_2(F)$ of the anonymity and neutrality groups. We emphasize that, the case $G(F) \notin \mathcal{D}_G$ is of interest in the applications as shown by the next example.

\begin{example} \label{noProduct}
	
\rm{	Assume that the alternatives are a subset of the individuals, that is, $N \subseteq H$.  This situation naturally arises when a committee needs to elect some of its members to form a subcommittee or to elect a president. Obviously we then have $n\leq h.$
In this case, from a social choice theory point of view, the only admissible changes for individuals and  alternative names are those belonging  to}:
$$ U_{N\subseteq H}:=\lbrace (\varphi,\psi) \in S_h \times S_n \mid \varphi(i)=\psi(i),  \quad \forall \, i \in N \rbrace$$

\noindent Of course $U_{N\subseteq H}$ is a subgroup of $G$ and it is immediately checked  that   $U_{N\subseteq H} \notin \mathcal{D}_{G}$. It would be nice to be able to design a {\sc spf} $F$ with $G(F) = U_{N\subseteq H}$. However we will see in Section \ref{firstSteps} that this is never possible.
\end{example}

We state, for completeness,  the fundamental result about the regularity of $ U_{H\subseteq N}$.
\begin{proposition}{\rm\cite[Proposition 22]{BG21}}\label{vecchia}
Let $N\subseteq H$. Then  $U_{N\subseteq H} \in \mathcal{R}$ if and only if $\gcd(h,n)=1$.
\end{proposition}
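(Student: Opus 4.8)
The statement to prove is Proposition~\ref{vecchia}: for $N \subseteq H$, we have $U_{N\subseteq H} \in \mathcal{R}$ if and only if $\gcd(h,n)=1$. The plan is to apply the characterization of regularity in Theorem~\ref{Caratterizzazione}, so that everything reduces to an analysis of the cycle types of the pairs $(\varphi,\psi) \in U_{N\subseteq H}$, where $\varphi$ and $\psi$ are forced to agree on the set $N$ of the first $n$ points.

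\medskip

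\textbf{Proof of the "if" direction.} Assume $\gcd(h,n)=1$ and suppose, for contradiction, that $U_{N\subseteq H} \notin \mathcal{R}$. By Theorem~\ref{Caratterizzazione}, there exist $(\varphi,\psi) \in U_{N\subseteq H}$ with $\psi \neq id$ and a prime $r$ with $|\psi|_r = r^a$, $a \in \mathbb{N}$, such that $r^a \mid \gcd(T_\varphi)$. The key observation is that, since $\varphi(i) = \psi(i)$ for all $i \in N$, the set $N$ is a union of orbits of $\varphi$ restricted appropriately — more precisely, $\varphi$ permutes $N$ in exactly the same way $\psi$ does, because $\psi$ already permutes $N = [n]$ among itself. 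Hence the orbits of $\psi$ on $N$ are precisely those orbits of $\varphi$ that lie inside $N$, and $|N| = n$ is the sum of those orbit sizes. Now $r^a \mid \gcd(T_\varphi)$ means every orbit of $\varphi$ has size divisible by $r^a$; in particular every $\varphi$-orbit contained in $N$ has size divisible by $r^a$, so $r^a \mid n$. On the other hand, since $r^a \mid \gcd(T_\varphi)$ we also have that every orbit of $\varphi$ on all of $[h]$ has size divisible by $r$ (indeed by $r^a$), hence $r \mid h$. This gives $r \mid \gcd(h,n)$, contradicting $\gcd(h,n)=1$. (One must check that $\psi$ genuinely has a nontrivial orbit on $N$: since $\psi \neq id$ and $\psi$ acts on $N = [n]$, it moves some point of $N$, so at least one $\psi$-orbit inside $N$ has size $> 1$, hence $\geq r^a > 1$; this confirms $n \geq r^a$ and in particular $r^a \mid n$ forces $n$ to be a nontrivial multiple of $r$.)

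\medskip

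\textbf{Proof of the "only if" direction.} Assume $\gcd(h,n) = d > 1$ and pick a prime $r \mid d$. The plan is to exhibit an explicit element $(\varphi,\psi) \in U_{N\subseteq H}$ violating condition $(b)$ of Theorem~\ref{Caratterizzazione}. Write $h = r \cdot h'$ and $n = r \cdot n'$. Choose $\psi \in S_n$ to be a product of $n'$ disjoint $r$-cycles on $N = [n]$, so $T_\psi = [r, \dots, r]$ (with $n'$ entries) and $|\psi|_r = r$. Then extend $\psi$ to $\varphi \in S_h$ by letting $\varphi$ agree with $\psi$ on $N$ and by choosing $\varphi$ on the remaining $h - n$ points to be a product of disjoint $r$-cycles as well — this is possible because $r \mid h - n$ (as $r \mid h$ and $r \mid n$). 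Then $\varphi(i) = \psi(i)$ for all $i \in N$, so $(\varphi,\psi) \in U_{N\subseteq H}$, and $T_\varphi = [r,\dots,r]$ consists entirely of $r$'s, so $\gcd(T_\varphi) = r$. With this choice $r = |\psi|_r = r^1$ and $r^1 = r \mid r = \gcd(T_\varphi)$, so condition $(b)$ fails and $U_{N\subseteq H} \notin \mathcal{R}$.

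\medskip

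\textbf{Main obstacle.} I expect the only genuinely delicate point is the orbit-structure argument in the "if" direction: one must argue carefully that the constraint $\varphi(i) = \psi(i)$ on $N$ forces $N$ to decompose as a union of full $\varphi$-orbits (so that $n$ inherits the divisibility $r^a \mid n$ from $r^a \mid \gcd(T_\varphi)$), and simultaneously track that $r \mid h$ comes from $r^a \mid \gcd(T_\varphi)$ applied to the whole of $[h]$. Everything else — building the explicit bad element in the converse, invoking Theorem~\ref{Caratterizzazione} — is routine. It is worth noting that $U_{N\subseteq H} \cong S_{H\setminus N} \times S_n$ via $(\varphi,\psi)\mapsto (\varphi|_{H\setminus N}, \psi)$, since $\varphi$ is completely determined on $N$ by $\psi$; one could alternatively phrase the whole proof through this isomorphism, but the cycle-type computation is essentially the same either way.
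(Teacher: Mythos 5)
Your proof is correct. There is, however, no in-paper argument to compare it against: the paper states Proposition~\ref{vecchia} as an imported result, citing \cite{BG21}, and gives no proof. Your route through Theorem~\ref{Caratterizzazione} is the natural self-contained one. The forward direction correctly exploits that the condition $\varphi|_N=\psi$ forces $N$ to be $\varphi$-invariant, hence a union of full $\varphi$-orbits whose sizes are all divisible by $r^a$, so that $r^a\mid n$ and $r^a\mid h$ simultaneously, contradicting $\gcd(h,n)=1$. (The parenthetical worry about $\psi$ having a nontrivial orbit is not actually needed: $r^a\mid n$ follows from $N$ being a nonempty union of orbits of size divisible by $r^a$, regardless.) The converse's explicit witness built from disjoint $r$-cycles is valid, including the boundary case $h=n$, where no extension outside $N$ is required, and it violates condition $(b)$ of Theorem~\ref{Caratterizzazione} with $a=1$.
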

In particular, since $\gcd(h,h)=h\neq 1$, the diagonal group $U_{N= H}:=\{(\varphi,\varphi) \in S_h \times S_h\}$ is never regular. 
\begin{definition} \label{def_gruppodi}
\noindent {\rm Let $(h,n)$ be a voting pair. We say that 
\begin{itemize} 
\item[1)] $U\leq S_h \times \{id\}$ is an \textit{anonymity group} for {\sc spf}s, with respect to the voting pair $(h,n)$,  if there exists $F \in \mathcal{F}_h(n)$ such that $G_1(F)= U.$ The set of such anonymity groups is denoted by $${\rm SPFAG}_h(n);$$
\item[2)] $U\leq  \{id\}\times S_n  $ is a \textit{neutrality group} for {\sc spf}s, with respect to the voting pair $(h,n)$,  if there exists $F \in \mathcal{F}_h(n)$ such that $G_2(F)= U$ 
The set of such neutrality groups is denoted by $${\rm SPFNG}_h(n);$$

\item[3)] $U\leq G$ is a \textit{symmetry group} for {\sc spf}s, with respect to the voting pair $(h,n)$,  if there exists $F \in \mathcal{F}_h(n)$ such that $G(F)=U$.
The set of such symmetry groups is denoted by $${\rm SPFSG}_h(n).$$
\end{itemize}
We define next the following sets of groups obtained  by the above sets just letting $h$ and $n$ vary:
\begin{itemize}
\item [1)] 
${\rm SPFAG}_h:=\bigcup_{n}{\rm SPFAG}_h(n),\quad {\rm SPFAG}(n):=\bigcup_{h}{\rm SPFAG}_h(n),\quad {\rm SPFAG}:=\bigcup_{h,n}{\rm SPFAG}_h(n);$
\item[2)] 
${\rm SPFNG}_h:=\bigcup_{n}{\rm SPFNG}_h(n),\quad {\rm SPFNG}(n):=\bigcup_{h}{\rm SPFNG}_h(n),\quad {\rm SPFNG}:=\bigcup_{h,n}{\rm SPFNG}_h(n);$
\item[3)] 
${\rm SPFSG}_h:=\bigcup_{n}{\rm SPFSG}_h(n),\quad {\rm SPFSG}(n):=\bigcup_{h}{\rm SPFSG}_h(n),\quad {\rm SPFSG}:=\bigcup_{h,n}{\rm SPFSG}_h(n).$
\end{itemize}}
\end{definition}

A voting pair $(h,n)$ is called {\it  fully anonymous} if ${\rm SPFAG}_h(n)=\{U\leq S_h\times\{id\}\}$; {\it fully neutral } if ${\rm SPFNG}_h(n)=\{U\leq \{id\}\times S_n\}$; {\it  fully symmetric} if ${\rm SPFSG}_h(n)=\{U\leq S_h\times S_n\}$.

Note that the fact that a voting pair is fully anonymous (neutral, symmetric) reflects the fact that it is possible to construct {\sc spf}s having any  level of anonymity (neutrality, symmetry), a very remarkable property.

If the voting pair $(h,n)$  is fully anonymous (neutral, symmetric), then the set ${\rm SPFAG}_h(n)$ (${\rm SPFNG}_h(n),\ \ {\rm SPFSG}_h(n)$) is also subgroup-closed. However, for a generic pair $(h,n)$, the sets ${\rm SPFAG}_h(n)$, ${\rm SPFNG}_h(n)$ and $ {\rm SPFSG}_h(n)$  are not, in general,  subgroup-closed. 
That fact surely constitutes a main difficulty  in treating with them.

The nature of the anonymity groups, neutrality groups and symmetry groups is generally quite complex and it can be hard to decide if a certain group  belongs to some of those classes.
We pose then the three problems which our paper wants to address.

\begin{problem1}\label{an-pb}
Determine the groups in ${\rm SPFAG}_h(n)$,  ${\rm SPFAG}_h$,  ${\rm SPFAG}(n)$ and ${\rm SPFAG}.$

\end{problem1}
\begin{problem2}\label{neut-pb}
Determine the groups in  ${\rm SPFNG}_h(n)$, ${\rm SPFNG}_h$,  ${\rm SPFNG}(n)$ and  ${\rm SPFNG}.$
\end{problem2}
\begin{problem3}\label{sym-pb}
Determine the groups in ${\rm SPFSG}_h(n)$, ${\rm SPFSG}_h$,  ${\rm SPFSG}(n)$ and ${\rm SPFSG}.$
\end{problem3}

Note that the anonymity  problem  and the neutrality  problem mainly correspond, in the context of  {\sc spf}s, to two questions posed by Kelly  in \cite{Kelly91}. The symmetry problem  is instead completely new.
We will see that those three problems are in no way immediately  deducible one  by another one and that a serious attack involving concepts of group theory is needed to approach them, with a  different  strategy for each of them.
For the moment, observe that Proposition \ref{legami}\,(v) immediately implies the two inclusions $${\rm SPFSG}_h(n)\cap \{V\times\{id\}: V\leq S_h\}\subseteq {\rm SPFAG}_h(n), $$$$ {\rm SPFSG}_h(n)\cap \{\{id\}\times W :  W\leq S_n\}\subseteq {\rm SPFNG}_h(n).$$

\section{First steps on symmetry groups} \label{firstSteps}
In this section we start with the study of the symmetry problem, giving some fundamental inclusions for ${\rm SPFSG}_h(n)$. We begin describing a special class of regular subgroups.
\begin{definition}\label{def_massimaleregolare}
\rm{Let $U\in \mathcal{R}_h(n)$. We say that $U$ is  \emph{ regular maximal} if  $V \in \mathcal{R}_h(n)$ and $V \geq U$ imply $V=U$.
 We denote the set of regular maximal subgroups by $\overset{\textbf{.}}{\mathcal{R}}_h(n)$.}
\end{definition}

\begin{proposition}  \label{max-reg-conj}
  The class $\overset{\textbf{.}}{\mathcal{R}}_h(n)$ is nonempty and conjugacy-closed.
\end{proposition}
\begin{proof} The set $\mathcal{R}_h(n)$ is a nonempty poset, by inclusion, and thus it admits maximal elements which are exactly those forming $\overset{\textbf{.}}{\mathcal{R}}_h(n)$.
Let $U\in \overset{\textbf{.}}{\mathcal{R}}_h(n)$ and $g\in G$. Assume that $U^g\leq V$ for some $V \in \mathcal{R}$. Then, since conjugation is a bijection and a group homomorphism, we have $(U^g)^{g^{-1}}\leq V^{g^{-1}}$ and hence $U\leq V^{g^{-1}}$. By Proposition \ref{R-conj-closed}, we have that $V^{g^{-1}}\in \mathcal{R}_h(n)$ and thus by the maximality of $U$ we have $U=V^{g^{-1}}$. Applying conjugation by $g$ we then obtain $U^g= V.$
\end{proof}

\begin{proposition}  \label{regolarita_rappresentabili} \begin{itemize}
\item[$(i)$] Let $U\in \overset{\textbf{.}}{\mathcal{R}}_h(n)$. Then, for every $F \in \mathcal{F}_h(n)^U$,  we have $G(F)= U$. 
  \item[$(ii)$]  The inclusions $\overset{\textbf{.}}{\mathcal{R}}_h(n)\subseteq {\rm SPFSG}_h(n) \subseteq \mathcal{R}_h(n)$ hold. In particular, ${\rm SPFSG}_h(n)\neq \varnothing.$
  \end{itemize}
\end{proposition}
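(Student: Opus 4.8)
The plan is to prove the two inclusions in $(ii)$ separately, deriving the first from part $(i)$, which itself needs to be established first.

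\medskip

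\noindent\textbf{Proof of $(i)$.} Let $U\in \overset{\textbf{.}}{\mathcal{R}}_h(n)$ and let $F\in \mathcal{F}_h(n)^U$; note this set is nonempty by Theorem \ref{teofon}, since $U\in\mathcal{R}_h(n)$. First I would observe that $F\in\mathcal{F}^U$ means precisely that $F(p^{(\varphi,\psi)})=\psi F(p)$ for all $p\in\mathcal{P}$ and all $(\varphi,\psi)\in U$, which is exactly the condition for $U\subseteq G(F)$; hence $U\leq G(F)$. By Proposition \ref{legami}$(i)$, $G(F)$ is a subgroup of $G$, and by Theorem \ref{teofon} applied to $G(F)$ (which satisfies $\mathcal{F}^{G(F)}\ni F\neq\varnothing$, since $F$ is trivially $G(F)$-symmetric by definition of $G(F)$), we get $G(F)\in\mathcal{R}_h(n)$. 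So $G(F)$ is a regular subgroup containing $U$; by the maximality of $U$ in $\mathcal{R}_h(n)$, we conclude $G(F)=U$.

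\medskip

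\noindent\textbf{Proof of $(ii)$.} For the first inclusion $\overset{\textbf{.}}{\mathcal{R}}_h(n)\subseteq {\rm SPFSG}_h(n)$: given $U\in\overset{\textbf{.}}{\mathcal{R}}_h(n)$, we have $U\in\mathcal{R}_h(n)$, so $\mathcal{F}_h(n)^U\neq\varnothing$ by Theorem \ref{teofon}; picking any $F$ in this set, part $(i)$ gives $G(F)=U$, so $U\in{\rm SPFSG}_h(n)$ by definition. For the second inclusion ${\rm SPFSG}_h(n)\subseteq\mathcal{R}_h(n)$: if $U\in{\rm SPFSG}_h(n)$, there is $F\in\mathcal{F}_h(n)$ with $G(F)=U$; but then $F\in\mathcal{F}^{G(F)}=\mathcal{F}^U$, so $\mathcal{F}^U\neq\varnothing$, whence $U\in\mathcal{R}_h(n)$ by Theorem \ref{teofon}. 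Finally, $\overset{\textbf{.}}{\mathcal{R}}_h(n)\neq\varnothing$ by Proposition \ref{max-reg-conj}, so the chain of inclusions forces ${\rm SPFSG}_h(n)\neq\varnothing$.

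\medskip

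\noindent I do not anticipate a serious obstacle here: the argument is essentially a careful unwinding of the definition of the symmetry group together with the equivalence $\mathcal{F}^U\neq\varnothing \iff U\in\mathcal{R}$ from Theorem \ref{teofon} and the existence of maximal regular subgroups from Proposition \ref{max-reg-conj}. The one point that needs a moment's care is the passage $F\in\mathcal{F}^{G(F)}$: this holds because, by the very definition of $G(F)$, the relation $F(p^{(\varphi,\psi)})=\psi F(p)$ holds for every $(\varphi,\psi)\in G(F)$ and every $p$, which is exactly $U$-symmetry for $U=G(F)$. Everything else is bookkeeping with the poset structure of $\mathcal{R}_h(n)$.
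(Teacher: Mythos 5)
Your proof is correct and follows essentially the same route as the paper: both arguments rest on the observation that $F\in\mathcal{F}^{G(F)}$, so Theorem \ref{teofon} forces $G(F)\in\mathcal{R}_h(n)$, and then the maximality of $U$ in the poset of regular subgroups together with $U\leq G(F)$ yields $G(F)=U$. The derivation of the inclusions in $(ii)$ and of the nonemptiness via Proposition \ref{max-reg-conj} also matches the paper's argument.
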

\begin{proof} Let $F\in \mathcal{F}_h(n)$. Then, by the Definitions \ref{u-sym} and \ref{def_gruppidisimmetria},  we immediately have  that $F \in \mathcal{F}_h(n)^{G(F)}$. Hence $\mathcal{F}_h(n)^{G(F)} \neq \varnothing$ and, by Theorem \ref{teofon}, $G(F) \in \mathcal{R}_h(n)$. This shows that ${\rm SPFSG}_h(n)\subseteq \mathcal{R}_h(n)$. In order to conclude the proof of $(ii)$ it is enough to show $(i)$.
Let then $U\in \overset{\textbf{.}}{\mathcal{R}}_h(n)$. Since $U$ is regular, by Theorem \ref{teofon},  we have $\mathcal{F}_h(n)^U \neq \varnothing$. Pick then  $F \in \mathcal{F}_h(n)^U$. Since ${\rm SPFSG}_h(n)\subseteq \mathcal{R}_h(n)$,  we have that  $G(F)$ is regular and clearly $U\leq G(F)$. Hence, by the maximality of $U$ among the regular subgroups, we deduce $G(F)=U$.
\end{proof}

As a consequence of Proposition \ref{regolarita_rappresentabili}, every symmetry groups must be regular. This represents a very strong restriction and has a great impact on the theory we are developing. 

For instance, consider the voting pair $(h,h)$. Then, by Proposition \ref{vecchia},  the  group $U_{N= H}=\{(\varphi,\varphi) \in S_h \times S_h\}$  is never a symmetry group. Thus, unfortunately, when the set of alternatives coincides with the set of  individuals, the respect of the natural level of symmetry given by the group $U_{N= H}$ is an impossible requirement for a {\sc spf}.

As a first application of Proposition  \ref{regolarita_rappresentabili}, we deduce a necessary condition for the fully symmetric voting pairs.
\begin{proposition}\label{full-sym}
If the voting pair $(h,n)$ is fully symmetric then $\gcd(h,n!)=1$.
\end{proposition}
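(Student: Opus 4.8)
The plan is to observe that this is an immediate consequence of the two structural facts already established: that every symmetry group must be regular (Proposition \ref{regolarita_rappresentabili}$(ii)$) and that the regularity of the whole group $G=S_h\times S_n$ is governed precisely by the arithmetic condition $\gcd(h,n!)=1$ (Proposition \ref{lemma17}).

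Here is the chain of implications I would write down. Suppose $(h,n)$ is fully symmetric, that is, ${\rm SPFSG}_h(n)=\{U\leq S_h\times S_n\}$. In particular, taking $U=G$, which is of course a subgroup of itself, we get $G\in {\rm SPFSG}_h(n)$; so there exists $F\in\mathcal{F}_h(n)$ with $G(F)=G$. By Proposition \ref{regolarita_rappresentabili}$(ii)$ we have ${\rm SPFSG}_h(n)\subseteq \mathcal{R}_h(n)$, hence $G=G(F)\in\mathcal{R}_h(n)$, i.e. $G$ is regular. Proposition \ref{lemma17} then yields $\gcd(h,n!)=1$, as desired.

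An equally short alternative route, which I might mention as a remark, is to invoke subgroup-closedness of $\mathcal R$ directly: if $(h,n)$ is fully symmetric then every subgroup of $G$ lies in ${\rm SPFSG}_h(n)\subseteq\mathcal R_h(n)$, so every subgroup of $G$ is regular, and by Proposition \ref{Rsubclosed} this forces $G$ itself to be regular, whence $\gcd(h,n!)=1$ again by Proposition \ref{lemma17}.

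There is really no serious obstacle here: the statement is a clean corollary of Propositions \ref{regolarita_rappresentabili}, \ref{Rsubclosed} and \ref{lemma17}, and the only thing to be careful about is recording that $G$ itself (or, if one prefers, a single well-chosen non-regular subgroup such as $U_{N=H}$ when $n=h$) furnishes the obstruction. I would keep the write-up to two or three lines, using the first route since it needs the least machinery.
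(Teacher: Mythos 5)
Your first route is exactly the paper's proof: full symmetry gives $G\in{\rm SPFSG}_h(n)$, Proposition \ref{regolarita_rappresentabili} forces $G$ to be regular, and Proposition \ref{lemma17} yields $\gcd(h,n!)=1$. The argument is correct and matches the paper's; the alternative via subgroup-closedness is fine but adds nothing.
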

\begin{proof} Let $(h,n)$ be fully symmetric. Then $G$ is a symmetry group and thus, by Proposition \ref{regolarita_rappresentabili}, $G$ is regular. As a consequence, by Proposition \ref{lemma17}, we have
$\gcd(h,n!)=1$.
\end{proof}

 Note that given $U \in \mathcal{R}$ and taken  $F \in \mathcal{F}^U$  we surely have $G(F) \geq U$. The difficulty is to understand if there are or not peculiar $F \in \mathcal{F}^U$ satisfying  also the 
other inclusion $G(F) \leq U$. The main tool for this part of our  investigation will be given by Theorem \ref{costruzione}. It allows to explicitly construct a $U$-symmetric {\sc spf} just by assigning its values on a system of representatives of the $U$-orbits on $\mathcal{P}$.

We now pass to other considerations about the set ${\rm SPFSG}_h(n)$. Remarkably ${\rm SPFSG}_h(n)$ is conjugacy-closed and, given $F\in \mathcal{F}$ such that $G(F)=U$ and $g\in G$, we  can exhibit an explicit construction of a {\sc spf} having as symmetry group the conjugate $U^g$ of $U$ through $g.$

\begin{definition}\label{Fconj}{ \rm Let $F \in \mathcal{F}$ be such that $G(F)=U$. For $g=(\varphi_g, \psi_g)\in G$, define the {\sc spf} $F_g$  by
\begin{equation*}
F_g(p):=\psi_g F(p^{g^{-1}})
\end{equation*}
for all $p\in \mathcal{P}.$ We call $F_g$ the {\it conjugate} of $F$ by $g.$}
\end{definition}
\begin{proposition} \label{chiusuraConiugioG(F)}  Let $F \in \mathcal{F}$ and $g\in G$. Then $G(F_g)= G(F)^g.$
 In particular ${\rm SPFSG}_h(n)$ is conjugacy closed.
\end{proposition}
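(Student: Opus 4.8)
The plan is to verify directly from Definition \ref{Fconj} that $F_g$ is a well-defined {\sc spf} and that an element $x\in G$ lies in $G(F_g)$ exactly when $x^{g^{-1}}\in G(F)$, i.e.\ when $x\in G(F)^g$. The computation rests entirely on the fact (Proposition \ref{prop:azio}) that $(\varphi,\psi)\mapsto p^{(\varphi,\psi)}$ is a group action, so that $(p^{g^{-1}})^{x^{-1}}$ and similar compositions can be rearranged freely; no regularity is needed here.

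First I would fix $g=(\varphi_g,\psi_g)\in G$ and $x=(\varphi,\psi)\in G$, and compute $F_g(p^x)$ for an arbitrary $p\in\mathcal P$. By definition, $F_g(p^x)=\psi_g\,F\big((p^x)^{g^{-1}}\big)$. Using \eqref{azione}, $(p^x)^{g^{-1}}=p^{g^{-1}x}$, and writing $g^{-1}x = g^{-1}(xg^{-1})g = (x^{g^{-1}})^{-1}\cdot$ — more precisely, I would introduce $y:=x^{g^{-1}}=g^{-1}xg$, so that $g^{-1}x = yg^{-1}$, hence $(p^x)^{g^{-1}}=p^{yg^{-1}}=\big(p^{g^{-1}}\big)^{y}$ by \eqref{azione} again. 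Now if $x\in G(F)^g$, equivalently $y\in G(F)$, then $F\big((p^{g^{-1}})^y\big)=\psi_y\,F(p^{g^{-1}})$ where $\psi_y=\pi_2(y)$. Since $y=g^{-1}xg$ and $S_n$ is the second factor, one checks $\pi_2(y)=\psi_{g}^{-1}\psi\psi_{g}$ (the $S_n$-coordinate conjugates by $\psi_g$). Substituting back, $F_g(p^x)=\psi_g\cdot(\psi_g^{-1}\psi\psi_g)\cdot F(p^{g^{-1}})=\psi\,\psi_g F(p^{g^{-1}})=\psi\,F_g(p)$, which is exactly the $U$-symmetry condition showing $x\in G(F_g)$. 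This proves $G(F)^g\subseteq G(F_g)$.

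For the reverse inclusion I would run the same chain of equalities in reverse, or more cleanly apply the already-proved inclusion to $F_g$ and $g^{-1}$: one verifies $(F_g)_{g^{-1}}=F$ directly from Definition \ref{Fconj} (again a short computation with \eqref{azione}), so $G(F)=G\big((F_g)_{g^{-1}}\big)\supseteq G(F_g)^{g^{-1}}$, i.e.\ $G(F_g)\subseteq G(F)^g$. Combining the two inclusions gives $G(F_g)=G(F)^g$. Finally, for the ``in particular'' statement: if $U\in {\rm SPFSG}_h(n)$, pick $F$ with $G(F)=U$; then for any $g\in G$ the {\sc spf} $F_g$ satisfies $G(F_g)=U^g$, so $U^g\in {\rm SPFSG}_h(n)$, proving ${\rm SPFSG}_h(n)$ is conjugacy-closed.

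The only mild subtlety — and the step I would be most careful about — is the bookkeeping of which conjugation appears where: tracking that $\pi_2(g^{-1}xg)=\psi_g^{-1}\psi\psi_g$ and that the stray $\psi_g$ from the definition of $F_g$ cancels correctly against it. Everything else is a formal manipulation of the group action \eqref{azione}, so the proof is essentially routine once that cancellation is set up properly.
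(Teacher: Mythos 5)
Your proposal is correct, and the forward inclusion $G(F)^g\subseteq G(F_g)$ is computed exactly as in the paper: expand $F_g(p^x)$ via \eqref{azione}, rewrite $g^{-1}x$ as $(g^{-1}xg)g^{-1}$, and cancel the stray $\psi_g$ against $\pi_2(g^{-1}xg)=\psi_g^{-1}\psi\psi_g$. Where you diverge is the reverse inclusion: the paper argues contrapositively, taking $v\notin G(F)^g$, producing a witness profile $p$ with $F(p^{g^{-1}vg})\neq\psi_g^{-1}\psi_v\psi_gF(p)$, and checking directly that $p^g$ witnesses $v\notin G(F_g)$; you instead observe that $(F_g)_{g^{-1}}=F$ (a one-line computation from Definition \ref{Fconj} and \eqref{azione}) and apply the already-proved inclusion to the pair $(F_g,g^{-1})$. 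Your route is slightly cleaner and avoids a second explicit profile computation, at the cost of having to verify the involution identity; both are elementary manipulations of the action, neither uses regularity, and the ``in particular'' conclusion is handled identically.
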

\proof $ $
 Let $U:=G(F).$ We show first  that $G(F_g) \geq U^g$. Let $u \in U^g$, so that $u=gu'g^{-1}$ for a certain $u' \in U$. Moreover, $\varphi_u=\varphi_g \varphi_{u'} \varphi_{g^{-1}}$ and $\psi_u=\psi_g \psi_{u'} \psi_{g^{-1}}$. Let $p \in \mathcal{P}$. By the $U$-symmetry of $F$ and the Definition \ref{Fconj}, it follows that
\[
F_g(p^{u})=\psi_g F((p^{u})^{g^{-1}})=\psi_g F(p^{g^{-1}gu'g^{-1}})=\psi_g F(p^{u'g^{-1}})
\]
\[
=\psi_g F((p^{g^{-1}})^{u'})=\psi_g \psi_{u'} F(p^{g^{-1}})=\psi_g \psi_{u'} \psi_g^{-1} F_{g}(p)=\psi_u F_g(p).
\]
Hence $ u \in G(F_g)$.

We now show the other inclusion. Let $v \notin U^g$. Then $g^{-1}vg \notin U$ and since $G(F)=U$, there exists $p \in \mathcal{P}$ such that $F(p^{g^{-1}vg}) \neq \psi_g^{-1}\psi_v\psi_gF(p)$. Consider the preference profile $p^{g} $. We show that $F_g((p^{g})^v) \neq \psi_v F_g(p^{g})$. Actually

\[F_{g}((p^{g})^v)=\psi_g F((p^{vg})^{g^{-1}})=\psi_g F(p^{g^{-1}vg})\neq \psi_g \psi_g^{-1}\psi_v\psi_g F(p)=
\]
\[
\psi_v\psi_g F(p) = \psi_v \psi_g F((p^{g})^{g^{-1}})=\psi_v \psi_g \psi_{g}^{-1} F_g(p^{g})= \psi_v F_g(p^{g})
\]

It follows that  $v \notin G(F_g)$  and hence $G(F_g)=U^g$.
The last part of the statement is now obvious.
\endproof

\begin{corollary}\label{G1conj} Let $F \in \mathcal{F}$ and $g\in G$. Then $G_1(F_g)= G_1(F)^g.$
 In particular ${\rm SPFAG}_h(n)$ is conjugacy closed.
\end{corollary}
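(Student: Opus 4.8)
The plan is to derive this corollary directly from Proposition \ref{chiusuraConiugioG(F)} together with the relation \eqref{ovvia} expressing $G_1$ as an intersection. The key observation is that conjugation by $g$ in $G$ is a group automorphism, and hence it commutes with intersections: for any subgroups $A,B\leq G$ we have $(A\cap B)^g = A^g\cap B^g$. Applying this with $A=G(F)$ and $B=S_h\times\{id\}$ gives
\[
G_1(F_g) = G(F_g)\cap\big(S_h\times\{id\}\big) = G(F)^g\cap\big(S_h\times\{id\}\big),
\]
where the second equality uses Proposition \ref{chiusuraConiugioG(F)}. It therefore remains to show that $G(F)^g\cap(S_h\times\{id\}) = \big(G(F)\cap(S_h\times\{id\})\big)^g = G_1(F)^g$.

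First I would record that $S_h\times\{id\}$ is a normal subgroup of $G=S_h\times S_n$, being the kernel of the projection $\pi_2$. Consequently it is fixed setwise by every conjugation: $(S_h\times\{id\})^g = S_h\times\{id\}$ for all $g\in G$. Combining this with the fact that conjugation preserves intersections, we get
\[
G_1(F)^g = \big(G(F)\cap(S_h\times\{id\})\big)^g = G(F)^g\cap (S_h\times\{id\})^g = G(F)^g\cap(S_h\times\{id\}),
\]
which is precisely what was needed. Chaining the two displayed computations yields $G_1(F_g)=G_1(F)^g$.

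For the last sentence of the statement, I would argue as follows: let $U\in{\rm SPFAG}_h(n)$, say $U=G_1(F)$ for some $F\in\mathcal F_h(n)$, and let $g\in G$. By the first part, $G_1(F_g)=G_1(F)^g=U^g$, so $U^g\in{\rm SPFAG}_h(n)$; this is exactly the assertion that ${\rm SPFAG}_h(n)$ is conjugacy-closed. Here I am implicitly using that $F_g$ is again an element of $\mathcal F_h(n)$, which is immediate from Definition \ref{Fconj} since $F_g$ is by construction a function $\mathcal P\to S_n$ for the same voting pair $(h,n)$.

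There is no real obstacle here — the proof is a short formal deduction. The only point requiring a moment's care is the normality of $S_h\times\{id\}$ in $G$, which makes the intersection with it conjugation-stable; without that, one would only get an inclusion. Everything else is the standard fact that an automorphism of a group carries the intersection of two subgroups to the intersection of their images, applied to the inner automorphism induced by $g$.
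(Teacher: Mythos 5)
Your proof is correct and follows exactly the paper's own argument: apply Proposition \ref{chiusuraConiugioG(F)}, the identity \eqref{ovvia}, the normality of $S_h \times \{id\}$ in $G$, and the fact that conjugation commutes with intersections. The additional remarks on why $F_g \in \mathcal{F}_h(n)$ and on conjugacy-closedness are sound and match what the paper leaves implicit.
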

\begin{proof} By  Proposition \ref{chiusuraConiugioG(F)}, by \eqref{ovvia} and by observing that $S_h \times \lbrace id \rbrace$ is normal in $G$, we have that 
$$G_1(F_g)=G(F_g)\cap \big(S_h \times \lbrace id \rbrace\big)=G(F)^g\cap \big(S_h \times \lbrace id \rbrace\big)^g=[G(F)\cap \big(S_h \times \lbrace id \rbrace\big)]^g=G_1(F)^g.$$
\end{proof}

\section{Anonymity groups vs symmetry groups} \label{anonimityESymmetry}

In this section, we show that for a subgroup of $S_h\times \{id\}$ to be an anonymity group is equivalent to be a symmetry group. Moreover, we examine the behaviour of the trivial group.
\begin{proposition} \label{simmetria e anonim}
Let $(h,n)$ be a voting pair and $V \leq S_h$. The following facts are equivalent:
\begin{enumerate}
\item[$(a)$] $V \times \{id \}\in {\rm SPFSG}_h(n)$;
\item[$(b)$] $V \times \{id \}\in {\rm SPFAG}_h(n)$.
\end{enumerate}
In other words  we have 
$${\rm SPFSG}_h(n)\cap \{V \times \{id \}:V \leq S_h\}= {\rm SPFAG}_h(n).$$
In particular, $${\rm SPFSG}_h(n)\supseteq{\rm SPFAG}_h(n).$$
\end{proposition}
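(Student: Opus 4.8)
The plan is to prove the two implications $(b)\Rightarrow(a)$ and $(a)\Rightarrow(b)$ separately, since the displayed equalities and the final inclusion follow formally once the equivalence is in hand (combined with Proposition~\ref{legami}\,(v) and the observation already recorded after the Problems that ${\rm SPFSG}_h(n)\cap\{V\times\{id\}:V\le S_h\}\subseteq{\rm SPFAG}_h(n)$).

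The implication $(a)\Rightarrow(b)$ is the easy direction and I would do it first. Suppose $V\times\{id\}=G(F)$ for some $F\in\mathcal{F}_h(n)$. Since $G(F)\le S_h\times\{id\}$, Proposition~\ref{legami}\,(v) gives immediately $G(F)=G_1(F)$, so $V\times\{id\}=G_1(F)\in{\rm SPFAG}_h(n)$. That is the whole argument; no construction is needed.

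The implication $(b)\Rightarrow(a)$ is the substantive one. Suppose $V\times\{id\}=G_1(F)$ for some $F\in\mathcal{F}_h(n)$; I must produce a (possibly different) {\sc spf} $F'$ with $G(F')=V\times\{id\}$. The natural idea is to symmetrize or ``kill'' the extra neutrality/symmetry that $F$ may carry while preserving the anonymity group. First note $V\times\{id\}\in\mathcal{R}_h(n)$: it is a subgroup of $S_h\times\{id\}$, hence regular by Proposition~\ref{Rsubclosed} (the subgroups of $G$ of the form $W\times\{id\}$ are regular). So by Theorem~\ref{teofon}, $\mathcal{F}^{V\times\{id\}}\ne\varnothing$, and by Theorem~\ref{costruzione} any $G\in\mathcal{F}^{V\times\{id\}}$ is determined by its values on a system of representatives $(p^j)_{j=1}^R\in\mathfrak{S}(V\times\{id\})$ of the $(V\times\{id\})$-orbits on $\mathcal{P}$. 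Every such $G$ satisfies $G(G)\ge V\times\{id\}$; the task is to choose the values $q_j=G(p^j)$ so cleverly that no $(\varphi,\psi)\notin V\times\{id\}$ lies in $G(G)$ — in particular so that $G_2(G)=\{id\}\times\{id\}$ and, via Proposition~\ref{legami}, $G(G)\not\supsetneq V\times\{id\}$. Concretely, if $(\varphi,\psi)\in G(G)$ with $\psi\ne id$, then for the constant profile $p_\sigma$ with $p_\sigma^{(\varphi,\psi)}=p_{\psi\sigma}$ one gets $G(p_{\psi\sigma})=\psi G(p_\sigma)$; by assigning the values on the (distinct) $(V\times\{id\})$-orbits of constant profiles in a suitably ``generic'' or ``rigid'' way — e.g.\ so that the map $\sigma\mapsto G(p_\sigma)$ has no nontrivial equivariance — one forces $\psi=id$. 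Once $G(G)\le S_h\times\{id\}$ is secured, Proposition~\ref{legami}\,(v) gives $G(G)=G_1(G)$, and it remains only to arrange $G_1(G)=V\times\{id\}$, i.e.\ that no $(\varphi,id)\notin V\times\{id\}$ fixes all values of $G$ — again achievable by a generic choice of the $q_j$ since such a $\varphi$ would have to identify two distinct $(V\times\{id\})$-orbits, contradicting that the $q_j$ on those two orbits were chosen differently.

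The main obstacle I anticipate is the bookkeeping for the ``generic choice'' step: one must simultaneously defeat all $(\varphi,\psi)$ with $\psi\ne id$ (to get back inside $S_h\times\{id\}$) and all $(\varphi,id)$ with $\varphi\notin V$ (to pin the anonymity group down to exactly $V$), and these two families of conditions constrain the values $q_j$ on different orbits. The cleanest route is probably to exploit constant profiles for the first family — there $\psi$ acts by left translation $\sigma\mapsto\psi\sigma$ on $S_n$, which is fixed-point-free, so a single well-chosen value already breaks any $\psi\ne id$ — and then use the remaining free values on non-constant orbits to separate the $(V\times\{id\})$-orbits merged by any $\varphi\notin V$. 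A counting argument in the spirit of the proof of Proposition~\ref{legami}\,(iii) (comparing $R(V\times\{id\})$ with $R$ of a strictly larger regular overgroup) may be the slickest way to guarantee such free values exist; I would check whether the size inequality $|\mathcal{F}^{V\times\{id\}}|>\sum|\mathcal{F}^{W}|$ over regular overgroups $W\gneq V\times\{id\}$ holds, which would give the existence of the desired $F'$ without an explicit construction.
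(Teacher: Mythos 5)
Your direction $(a)\Rightarrow(b)$ is exactly the paper's argument and is fine. For $(b)\Rightarrow(a)$ your skeleton also matches the paper's: assign the value $id$ on every constant profile (each constant profile is a singleton $(V\times\{id\})$-orbit, so all of them occur among the representatives and this is a legitimate assignment), which kills any $(\varphi,\psi)$ with $\psi\neq id$ via $id=F'(p_\psi)=\psi F'(p_{id})=\psi$, and then pin down the anonymity group using the non-constant orbits. The gap is in how you pin it down. You appeal to a ``generic choice'' of the remaining values $q_j$, justified by the claim that any $\varphi\notin V$ ``would have to identify two distinct $(V\times\{id\})$-orbits''. That claim is false in general: $(\varphi,id)$ can preserve every $(V\times\{id\})$-orbit (this is precisely the phenomenon behind the orbit extension $O(U)$ later in the paper; see $A_h\times\{id\}$ for $h>n!$ in Proposition~\ref{conclusione alterno}), and even when distinct orbits are merged, a simultaneous ``different values'' assignment may be impossible (the Klein group with $n=2$ in Proposition~\ref{quadrinomio}: three pairs of merged orbits, only two available values). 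Your fallback counting inequality comparing $|\mathcal{F}^{V\times\{id\}}|$ with the regular overgroups is likewise not established and fails in general (Proposition~\ref{idpers2} exhibits $\mathcal{F}^U=\mathcal{F}^X\cup\mathcal{F}^Y\cup\mathcal{F}^Z$). These pathologies are excluded here only because you are \emph{given} an $F$ with $G_1(F)=V\times\{id\}$ --- and your construction never actually uses the values of that $F$.

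The repair, which is what the paper does, is to define $F'$ on a system of representatives by $F'(p^j)=id$ for constant $p^j$ and $F'(p^j)=F(p^j)$ otherwise. Then $F'$ agrees with $F$ on all of $\mathcal{P}\setminus\mathcal{K}$, and for each $\varphi\notin V$ the witness $p$ with $F(p^{(\varphi,id)})\neq F(p)$ (which exists because $G_1(F)=V\times\{id\}$) is automatically non-constant, hence also witnesses $F'(p^{(\varphi,id)})\neq F'(p)$. No genericity or counting argument is needed.
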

\begin{proof} Let $U:=V \times \{id\}$. 

\noindent $(a)\Rightarrow (b)$ Let  $U\in  {\rm SPFSG}_h(n)$. Then we have that $U=G(F)$, for some  $F \in \mathcal{F}^U$ and,  using \eqref{ovvia}, we deduce $G(F)=G_1(F)$. Thus $U=G_1(F)\in  {\rm SPFAG}_h(n)$. 
\vspace{2mm}

\noindent $(b)\Rightarrow (a)$ Let $U\in  {\rm SPFAG}_h(n)$. Then there exists $F \in \mathcal{F}^U$  such that $G_1(F) = U$. Starting from $F$, we want to construct  now a new  {\sc spf}  $F'$ such that $G(F')=U$. Fix a system ${\bf p}={(p^j)}_{j=1}^{R}$ of representatives for the $U$-orbits on $\mathcal{P}$.
Since $U\leq S_h \times \{id\}$, we have that the set $\mathcal{K}$ of constant profiles is union of $U$-orbits each formed by a singleton and thus 
 $\mathcal{K}\subseteq \{p^j\mid j\in [R]\}$. By Theorem  \ref{costruzione},  there exists a unique $U$-symmetric {\sc spf}  $F'$ such that 
 \[F'(p^j)=
\begin{cases}
id \quad \quad \text{if } \quad p^j \in  \mathcal{K}\\
F(p^j) \quad \text{otherwise}
\end{cases}
\]
Note that we surely have $G(F') \geq U$.  
Moreover, since each $p\in \mathcal{K}$ must appear among the $p^j$, we have that $F'(p)=id$ for all $p\in \mathcal{K}$.
We claim first that $F'$ coincides with $F$ on  $\mathcal{P}\setminus\mathcal{K}$. Let $p\in \mathcal{P}\setminus\mathcal{K}$. Then there exists $j\in [R]$  with  $p^j\notin \mathcal{K}$ and $\bar{\varphi}\in V$ such that $p=p^{j(\bar \varphi,id)}$. From the fact that $F,F' \in \mathcal{F}^U$ we then get
 \begin{equation*}\label{uno}
  F'(p)=F'(p^{j(\bar \varphi,id)})=F'(p^j)=F(p^j)=F(p^{j(\bar \varphi,id)})=F(p).
  \end{equation*}
  
 We claim next that $G(F')=G_1(F')$. Indeed, let $g=(\varphi,\psi) \in G(F')$ and let $p=p_{id}\in \mathcal{K}.$ Then  $ p^{(\varphi,\psi)}=p_{\psi} \in \mathcal{K}.$ By the definition of $F'$ and by its $U$-symmetry, we deduce that
 $$id=F'(p^{(\varphi,\psi)})=\psi F'(p)=\psi\, id=\psi$$
 and thus $\psi=id$. Hence $g=(\varphi,id)\in G(F')\cap S_h \times \{id\}=G_1(F').$ Since the other inclusion is obvious, we therefore have $G(F')=G_1(F')$.
 
 We claim now that $G_1(F')=U$. The inclusion $G_1(F') \geq U$ follows immediately by $G(F') \geq U$  and by $U\leq S_h\times\{id\}$. Since $S_h\times\{id\}\geq G_1(F')$, in order to show the other inclusion $G_1(F') \leq U$ we show that the complement of $U$ in $S_h\times\{id\}$ is included in the complement of $G_1(F')$ in $S_h\times\{id\}$.
 Let then $(\varphi,id)\notin U$ and show that $(\varphi,id) \notin G_1(F')$. Since $G_1(F)=U$, there exists $p \in \mathcal{P}$ such that $F(p^{(\varphi,id)}) \neq F(p)$. Note that surely $p\notin \mathcal{K}$ because assuming $p\in \mathcal{K}$ we have $p^{(\varphi,id)}=p$ and hence also $F(p^{(\varphi,id)})= F(p)$. 
Then, since $F'$ coincides with $F$ on  $\mathcal{P}\setminus\mathcal{K}$,  we also have that $F'(p^{(\varphi,id)}) \neq F'(p)$, as desired.

It follows that $U=G_1(F')=G(F')$ and thus $U\in {\rm SPFSG}_h(n)$.
\end{proof}
As an immediate remarkable consequence, we have the following fact.
\begin{corollary} \label{co-an-sim}
Every anonymity group is also a symmetry group. 
\end{corollary}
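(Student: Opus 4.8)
The plan is to derive Corollary \ref{co-an-sim} directly from Proposition \ref{simmetria e anonim} together with the definitions in Definition \ref{def_gruppodi}. Recall that, by Proposition \ref{legami}\,$(ii)$, any anonymity group $U$ of a {\sc spf} $F$ satisfies $U=G_1(F)\leq S_h\times\{id\}$; hence $U$ has the form $V\times\{id\}$ for some $V\leq S_h$. So every group lying in ${\rm SPFAG}_h(n)$ automatically belongs to the family $\{V\times\{id\}:V\leq S_h\}$, and the relevant statement of Proposition \ref{simmetria e anonim} applies to it.

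First I would spell out that, by definition, $U\in{\rm SPFAG}_h(n)$ means there exists $F\in\mathcal{F}_h(n)$ with $G_1(F)=U$; by Proposition \ref{legami}\,$(ii)$ this forces $U=V\times\{id\}$ with $V=\pi_1(U)\leq S_h$. Then the implication $(b)\Rightarrow(a)$ of Proposition \ref{simmetria e anonim} gives $V\times\{id\}\in{\rm SPFSG}_h(n)$, i.e.\ $U\in{\rm SPFSG}_h(n)$: there is some $F'\in\mathcal{F}_h(n)$ with $G(F')=U$. Thus $U$ is a symmetry group. Since $h$ and $n$ were an arbitrary voting pair, taking unions over $h$ and $n$ as in Definition \ref{def_gruppodi} yields ${\rm SPFAG}\subseteq{\rm SPFSG}$, and similarly with $h$ fixed or $n$ fixed.

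There is essentially no obstacle here: the whole content was already carried in Proposition \ref{simmetria e anonim}, whose constructive argument (building $F'$ from $F$ by forcing $F'(p)=id$ on the constant profiles $\mathcal{K}$) does the real work. The only point worth stating explicitly in the proof is the reduction observing that anonymity groups are, by their very definition, subgroups of $S_h\times\{id\}$, so that the hypothesis of Proposition \ref{simmetria e anonim} is met; after that the corollary is immediate, and one may simply write ``This is an immediate consequence of Proposition \ref{simmetria e anonim}, since every anonymity group is by definition a subgroup of $S_h\times\{id\}$.''
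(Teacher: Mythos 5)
Your argument is correct and is exactly the paper's: Corollary \ref{co-an-sim} is stated there as an immediate consequence of Proposition \ref{simmetria e anonim}, using precisely the observation that anonymity groups are by definition of the form $V\times\{id\}$ so that the implication $(b)\Rightarrow(a)$ applies. Nothing is missing.
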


 \begin{example} \label{simmalterno}
 \begin{itemize}
 \item[$(a)$] If $2\leq h \leq n!$, then $A_h \times \{id\}$ is an anonymity group.
\item [$(b)$] For every pair $(h,n)$, the group $S_h \times \{id\}$ is an anonymity group.

\end{itemize}
\end{example}
\proof $(a)$ Since the number of individuals is at most the number of linear orders on the alternatives, there exists
 $p\in \mathcal{P}$ with distinct  components. Then $p^{(\varphi_1,id)}=p^{(\varphi_2,id)}$, for some $\varphi_1, \varphi_2 \in S_h$,  implies $\varphi_1=\varphi_2$.
 
 Let  $U:=A_h \times \{id\}$. We show that  $p^{((12),id)} \notin p^U$. By contradiction, assume that there exists
 $u=(\sigma, id) \in U$,  such that $p^{((12),id)}=p^u$. Then $(12)=\sigma\in A_h,$ a contradiction. Thus $p^1:=p$ e $p^2:=p^{((12),id)}$ are representatives for two distinct orbits of $U$ on $\mathcal{P}$. We complete the list $(p^1,p^2)$, obtaining a system of representatives $(p^i)_{i=1}^R$ for the $U$-orbits. Fix $\sigma\in S_h$ with $\sigma\neq id$. 
 By Theorem \ref{costruzione}, there exists a unique $F \in \mathcal{F}^U$ such that 
 \begin{center}
$F(p^1)=id$, \quad \quad $F(p^2)=\sigma$, \quad \quad $F(p^j)=id$ \quad per \quad $3 \leq j \leq r$. 
\end{center}
Then we have $$F(p^{((12),id)})=F(p^2)= \sigma \neq id=F(p).$$
It follows that $G_1(F) \neq S_h \times \{id\}$. On the other hand, $G_1(F) \geq U$ because $F$ is $U$-symmetric and since $U$ is maximal in $S_h \times \{id\}$, we deduce that $G_1(F)=U$.
\smallskip

$(b)$ Consider the constant \textsc{spf} defined by $F( p)=id$ for all $p \in \mathcal{P}$ and note that $G_1(F)=S_h \times \{id\}$. 
\endproof
 
 We show now that the trivial subgroup $1=\{(id,id)\}$ belongs to ${\rm SPFAG}_h(n), {\rm SPFNG}_h(n)$ and ${\rm SPFSG}_h(n)$ whatever the number of individuals and alternatives are. We emphasize that proving  that the trivial group  is a symmetry group involves arguments of different flavour for the diverse collective choices. Kelly  in \cite{Kelly92} shows that $1$ is a symmetry group for those collective choices associating with profiles, made up by orders, a single alternative.

\begin{proposition}\label{1sym}
For every voting pair  $(h,n)$,  the trivial group is an anonymity group, a neutrality group and a symmetry group.
\end{proposition}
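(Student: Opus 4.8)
The plan is to exhibit explicitly, for an arbitrary voting pair $(h,n)$, a single {\sc spf} $F$ whose symmetry group $G(F)$ is trivial; by \eqref{ovvia} this automatically forces $G_1(F)=1$ and $G_2(F)=1$ as well, so the same $F$ witnesses all three claims simultaneously. The natural candidate is a tie-breaking agenda combined with a dictator: take $F:=D_1$, the dictatorship with dictator $1$, which already has very little neutrality distortion, but that gives $G_1(D_1)=S_{H\setminus\{1\}}\times\{id\}$, which is far from trivial. So instead the idea is to ``destroy'' anonymity by distinguishing all individuals and simultaneously destroy neutrality by distinguishing all alternatives.

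Concretely, the construction I would use is the following. Since the trivial group $1$ is regular (it is contained in $S_h\times\{id\}$, or apply Corollary~\ref{reg-sub-Sn}), Theorem~\ref{teofon} guarantees $\mathcal{F}^{1}=\mathcal{F}\neq\varnothing$, and Theorem~\ref{costruzione} lets me prescribe $F$ freely on all of $\mathcal{P}$ (the $1$-orbits are singletons, so a system of representatives is all of $\mathcal{P}$). I would then define $F$ so that it ``reads off'' both the identity of the individuals and the identity of the alternatives from enough profiles to kill every nontrivial symmetry. For instance, pick a profile $p^{*}$ all of whose $h$ components are distinct linear orders (possible when $h\le n!$; the case $h>n!$ needs a separate small argument, see below) and set $F(p^{*})$ and the values $F$ on a few of its images under $G$ to pairwise distinct orders chosen to block every $g\neq(id,id)$. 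The cleanest route is probably: enumerate the nontrivial elements $g_1,\dots,g_m$ of $G$, and for each $g_k$ choose a profile $q_k$ and require $F(q_k^{g_k})\neq \psi_{g_k}F(q_k)$, which is possible because the two sides live in $S_n$ and we have full freedom of assignment by Theorem~\ref{costruzione}, taking care (a standard but slightly fiddly bookkeeping point) that the finitely many constraints do not conflict --- i.e.\ that we never ask $F$ to take two different values at the same profile. One arranges non-conflict by spreading the witnessing profiles out, e.g.\ choosing for distinct $g_k$ witnessing profiles lying in distinct $G$-orbits of size as large as possible, or by a direct inductive selection.

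A slicker implementation that avoids case analysis on $h$ versus $n!$: build $F$ as a perturbation of the constant {\sc spf}. Start from $F_0\equiv id$. For each nontrivial $g=(\varphi,\psi)\in G$ we want some $p$ with $F(p^{g})\neq\psi F(p)$. If $\psi\neq id$, already the constant profile $p_{id}$ satisfies $F_0(p_{id}^{g})=F_0(p_{\psi})=id\neq\psi=\psi F_0(p_{id})$, so every element with nontrivial second component is automatically excluded from $G(F_0)$ --- in fact $G(F_0)=S_h\times\{id\}$. It therefore remains only to kill the elements $(\varphi,id)$ with $\varphi\neq id$. For this, I would alter $F$ at one single non-constant profile: choose any $p\in\mathcal{P}\setminus\mathcal{K}$, redefine $F(p):=\tau$ for a fixed $\tau\neq id$ while keeping $F\equiv id$ elsewhere (legitimate by Theorem~\ref{costruzione} applied to $U=1$). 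Then for $(\varphi,id)\neq(id,id)$, either $p^{(\varphi,id)}=p$ --- impossible unless $\varphi$ fixes the profile, and one checks that the set of such $\varphi$ can be made trivial by choosing $p$ with all components distinct, or more robustly by altering $F$ at two cleverly chosen profiles --- or $p^{(\varphi,id)}\neq p$, in which case $F(p^{(\varphi,id)})=id\neq\tau=F(p)$, so $(\varphi,id)\notin G_1(F)$. This shows $G_1(F)=1$, and since perturbing away from the constant value only at non-constant profiles preserves $F(p_{id}^{g})=id$ versus $\psi$ exactly as before, we still get $\psi=id$ for all $g\in G(F)$, hence $G(F)=G_1(F)=1$, and $G_2(F)=1$ too.

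The main obstacle is precisely the bookkeeping: ensuring that the finitely many inequality constraints ``$F(q_k^{g_k})\neq\psi_{g_k}F(q_k)$'' can be realized by a genuine function, i.e.\ that no profile is assigned two values and that altering $F$ at a profile $p$ does not accidentally restore some symmetry we wanted to break. The case $h>n!$ (where no profile has all components distinct, so the stabilizer of any profile in $S_h\times\{id\}$ is nontrivial) is the subtle point and likely the reason the authors devote a full proof to it: here one must use \emph{several} non-constant profiles lying in different orbits whose pointwise stabilizers intersect trivially in $S_h$, and argue that such a family exists --- this is where a little combinatorial care, rather than a one-line construction, is genuinely needed. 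I expect the paper's proof to handle exactly this by a judicious explicit choice of two or more profiles together with an appeal to Theorem~\ref{costruzione}.
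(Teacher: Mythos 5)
Your overall reduction is sound: producing a single $F$ with $G(F)=1$ and invoking \eqref{ovvia} does settle all three claims at once, and your ``slicker'' construction (constant $id$ except at one profile $p^*$ with pairwise distinct components, where $F(p^*)=\tau\neq id$) is a complete and correct proof when $h\le n!$ --- the constant profile $p_{id}$ kills every $g$ with $\psi_g\neq id$, and the trivial stabilizer of $p^*$ kills every $(\varphi,id)$ with $\varphi\neq id$. For that range of parameters your argument is arguably cleaner than the paper's.

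The genuine gap is the case $h>n!$, which you flag but do not close, and your sketched repair does not obviously work. Taking several perturbed profiles $p^{(1)},\dots,p^{(k)}$ whose pointwise stabilizers in $S_h$ intersect trivially is not enough: if $\varphi$ moves $p^{(i)}$ but happens to send it onto another perturbed profile carrying the same value $\tau$, then $F(p^{(i)(\varphi,id)})=F(p^{(i)})$ and no contradiction arises. Curing this by assigning pairwise distinct values to the $p^{(i)}$ needs at least $k+1$ elements of $S_n$, and this collapses precisely where the problem is hardest: for $n=2$ and $h\ge 3$ only two values are available, yet every profile in $(S_2)^h$ has a nontrivial stabilizer, so no perturbation of the constant {\sc spf} at finitely many ``generic'' profiles in the above sense can succeed --- a genuinely different construction is required. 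This is exactly what the paper supplies: for $n\ge 3$ an explicit rule (output the preference of the first individual ranking alternative $1$ on top, else $id$) and for $n=2$ the indicator of the set of ``ordered'' profiles, both of which work for \emph{every} $h$ and are verified by exhibiting, for each $\varphi\neq id$, a concrete witnessing profile built from $\min\{i: \varphi^{-1}(i)>i\}$. (The paper then gets the symmetry and neutrality statements from the anonymity one via Proposition \ref{simmetria e anonim} and \eqref{ovvia}, a reduction equivalent in effect to yours.) Until you supply an actual construction for $h>n!$ --- in particular for $n=2$ --- the proposal does not prove the proposition.
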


\begin{proof} 
 Let first $n \geq 3$. Given $p \in \mathcal{P}$, define the set $U_p:=\{ i \in H \mid p_i(1)=1\}$ and the {\sc spf} $F$ given by
\[F(p)=\left\{\begin{matrix} id & {\text{\rm if}} & U_p = \varnothing\\ p_{\min U_p}  & {\text{\rm if}} &  U_p \neq \varnothing\end{matrix}\right.
\]
for all $p\in \mathcal{P}.$ Recall that $H=[h].$

We claim that $G_1(F)=1$.  Let $\varphi \in S_h\setminus\{id\}$. We show that there exists  $p \in \mathcal{P}$ such that $F(p^{(\varphi,id)})\neq F(p)$. Since $\varphi \neq id$,  we have \begin{equation*}
H_{\varphi}:=\{ i \in H \mid \varphi^{-1}(i) > i \} \neq \varnothing.
\end{equation*}

 Let $i$ be the minimum of $H_{\varphi}$ and let $j :=\varphi^{-1}(i).$ Note that $j>i$. Consider the profile $p \in \mathcal{P}$ such that $p_i = id$, $p_j=(2 3)$ and $p_k=(1 2)$ for $k \in H \setminus \{i,j\}$. Since $p_k(1)=2$ holds for all $k<i$  while $p_i(1)=1$, it follows that  $F(p)=p_i=id$. Now we show that $F(p^{(\varphi,id)})=p_j$. If $k<i$, then we have $\varphi^{-1}(k)\leq k<i$ and thus  $p^{(\varphi,id)}_k=p_{\varphi^{-1}(k)}=(12).$ On the other hand $p^{(\varphi,id)}_i=p_{\varphi^{-1}(i)}=p_j=(23)$ so that $p^{(\varphi,id)}_i(1)=1$. Thus the minimum of $U_{p^{(\varphi,id)}}$ is given by $i$ and $F(p^{(\varphi,id)})=p_j=(23)\neq id$. 

Let next $n=2$. Call $p\in \mathcal{P}$ ordered if there exists  $j\in [h]$ such that $p_i=id$ for $i\in[j]$ and $p_i=(12)$ for $j+1\leq i\leq h.$
Define the {\sc spf} $F$ by
\[F(p)=\left\{\begin{matrix} id & {\text{\rm if}} & p\  {\text{\rm is ordered}}\\ (12)& {\text{\rm if}} & p\  {\text{\rm is  not ordered}} \end{matrix}\right.
\]

Note that  $F(p)=F(p')$ if and only if  $p, p'\in \mathcal{P}$ are both ordered or both not ordered.
Assume, by contradiction, that there exists $(\varphi,id) \in G_1(F)$ with  $\varphi \neq id$. Consider the set $H_{\varphi}$ as in \eqref{aux}. Let $i$ be the minimum of $H_{\varphi}$ and let $j :=\varphi^{-1}(i)\in H.$ Note that $h\geq j>i$. Define the preference profile $p \in \mathcal{P}$ by $p_k=id$ for $k\in[i]$ and $p_k=(12)$ for $i+1\leq k\leq h.$ Then the number of components of $p$ equal to $id$ is exactly $i$ and $p$ is ordered. We claim that the preference profile $p^{(\varphi,id)}$ is instead not ordered. First note that $p^{(\varphi,id)}_i=p_{\varphi^{-1}(i)}=p_j=(12)$. If $i=1$, that fact immediately implies that $p^{(\varphi,id)}$ is not ordered. Assume next that $i\geq 2$. If $1\leq k\leq i-1$, then we have $\varphi^{-1}(k)\leq k<i$ and thus
$p^{(\varphi,id)}_k=p_{\varphi^{-1}(k)}=id$. Then there is one occurrence of $id$ left out and necessarily appearing as preference relation  $p^{(\varphi,id)}_{\ell}$ of some individual $\ell\in H,$ with $\ell>i.$ Hence $p^{(\varphi,id)}$ is not ordered. 

Thus we have shown that $1=\{id\}\times\{id\}$ is an anonymity group. By Theorem \ref{simmetria e anonim}, 1 is also a symmetry group. Now, by \eqref{ovvia}, $1$ is also a neutrality group.
\end{proof}

\section{The solution of the neutrality problem}\label{sol-neut}

For what concerns neutrality we have a quite surprising result. 
\begin{theorem} \label{neutralita} 
Every  subgroup of $\{id\} \times S_n$ is a neutrality group. In particular, ${\rm SPFNG}_h(n)$ is subgroup-closed and conjugacy-closed.
\end{theorem}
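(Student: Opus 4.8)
The plan is to fix an arbitrary subgroup $W\leq S_n$ and construct an {\sc spf} $F\in\mathcal{F}_h(n)$ with $G_2(F)=\{id\}\times W$. The starting point is Corollary~\ref{reg-sub-Sn}, which tells us that $U:=\{id\}\times W$ is regular, so by Theorem~\ref{teofon} the set $\mathcal{F}^U$ is nonempty and, more importantly, Theorem~\ref{costruzione} lets us build an $U$-symmetric {\sc spf} by freely prescribing its values on a system of representatives $(p^j)_{j=1}^R\in\mathfrak{S}(U)$ of the $U$-orbits on $\mathcal{P}$. Any such $F$ automatically satisfies $G_2(F)\geq W$ (identifying $W$ with $\{id\}\times W$), so the whole job is to choose the values $q_j=F(p^j)$ so cleverly that no extra neutrality symmetry, and in fact no extra symmetry at all beyond $U$, sneaks in — or at least so that $G_2(F)$, which is $G(F)\cap(\{id\}\times S_n)$ by \eqref{ovvia}, is exactly $W$.

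The key idea I would use is to encode $W$ directly into the output of $F$ on the constant profiles. Since $U=\{id\}\times W\leq\{id\}\times S_n$, the action of $U$ sends a constant profile $p_\sigma$ to $p_{\psi\sigma}$ for $(id,\psi)\in U$, so the $U$-orbit of $p_\sigma$ inside $\mathcal{K}$ is $\{p_{w\sigma}:w\in W\}=\{p_\tau:\tau\in W\sigma\}$, i.e.\ the constant profiles are partitioned into $U$-orbits indexed by the right cosets $W\sigma$ of $W$ in $S_n$. I would pick one representative $\sigma$ from each right coset (with $\sigma=id$ chosen for the coset $W$ itself) and set $F(p_\sigma)=\sigma$ on those orbit representatives; $U$-symmetry then forces $F(p_\tau)=w\sigma=\tau$ for every $\tau\in W\sigma$, hence $F(p_\tau)=\tau$ for \emph{all} $\tau\in S_n$. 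On the remaining $U$-orbits (those not meeting $\mathcal{K}$) I would set $F$ to some fixed constant value, say $id$, using Theorem~\ref{costruzione} to assemble the unique resulting $F\in\mathcal{F}^U$. The point of making $F$ behave like the ``identity {\sc spf}'' on constant profiles is that constant profiles are fixed by every $(\varphi,\psi)$ with $\psi=id$, yet are permuted faithfully by the second component, so they are exactly the right test profiles to pin down $G_2(F)$.

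To finish, take any $(id,\psi)\in G_2(F)$. Applying the defining relation to the constant profile $p_{id}$ gives $F(p_{id}^{(id,\psi)})=\psi F(p_{id})$, i.e.\ $F(p_\psi)=\psi\cdot id=\psi$; but we arranged $F(p_\psi)=\psi$ only as an \emph{identity}, so this is consistent for every $\psi$ — that is not enough. The real constraint comes from using a \emph{non-constant} profile, or from iterating: for arbitrary $\sigma\in S_n$, $F(p_\sigma^{(id,\psi)})=F(p_{\psi\sigma})=\psi\sigma$ must equal $\psi F(p_\sigma)=\psi\sigma$, again always true. So the constant-profile construction alone yields $G_2(F)=\{id\}\times S_n$, not $W$ — which means I must instead break the symmetry on non-constant profiles. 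The correct refinement is: keep $F(p_\sigma)=\sigma$ on constant profiles, but on some carefully chosen non-constant $U$-orbit representative $p^\ast$ assign a value $q^\ast$ whose $W$-orbit relation with the other assignments detects membership in $W$. Concretely, using that $h\geq 2$ one can find a non-constant profile whose $U$-stabilizer is trivial and whose $U$-orbit contains no constant profile, set $F(p^\ast)=id$ there, and then for any $(id,\psi)\in G_2(F)$ with $\psi\notin W$ produce a contradiction by evaluating $F$ on $p^\ast$ versus on a profile in a different $U$-orbit that $\psi$ would have to identify. The main obstacle — and the part needing genuine care rather than routine computation — is exactly this: engineering the values on non-constant orbits so that every $\psi\in S_n\setminus W$ is ruled out while every $w\in W$ survives, and simultaneously checking (via \eqref{ovvia}) that no spurious cross-terms $(\varphi,\psi)$ with $\varphi\neq id$ force $\psi$ into $G_2(F)$; the fact that $U\leq\{id\}\times S_n$ keeps the individual-permutation component under control and is what ultimately makes the argument go through, giving $G_2(F)=W$ and hence, since $W$ was arbitrary, that ${\rm SPFNG}_h(n)$ contains every subgroup of $\{id\}\times S_n$ and is therefore subgroup-closed and conjugacy-closed.
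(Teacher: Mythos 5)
There is a genuine gap: you correctly set up the machinery (regularity of $U=\{id\}\times W$ via Corollary~\ref{reg-sub-Sn}, free assignment of values on orbit representatives via Theorem~\ref{costruzione}, the automatic inclusion $G_2(F)\geq U$), and you correctly diagnose that the ``identity on constant profiles'' idea proves nothing, but the decisive step --- actually exhibiting an assignment of values that excludes every $\psi\in S_n\setminus W$ --- is never carried out. You say to set $F(p^\ast)=id$ on one non-constant representative and then ``produce a contradiction by evaluating $F$ on $p^\ast$ versus on a profile in a different $U$-orbit that $\psi$ would have to identify,'' but that contradiction requires knowing the value of $F$ on the representative of the orbit containing $p^{\ast(id,\psi)}$, which you have not pinned down; if those other representatives carry arbitrary values, $F(p^{\ast(id,\psi)})$ can accidentally equal $\psi F(p^\ast)$ and the argument collapses. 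You yourself flag this as ``the main obstacle\dots needing genuine care,'' which is an admission that the proof is not complete. (A smaller point: the worry about ``spurious cross-terms $(\varphi,\psi)$ with $\varphi\neq id$'' is unnecessary, since $G_2(F)$ by definition only quantifies over pairs of the form $(id,\psi)$.)

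The missing device, which is what the paper supplies, is to control the values on an entire family of $U$-orbits at once via the coset structure. For $W<S_n$ let $x_1=id,\dots,x_m$ be representatives of the right cosets of $W$ in $S_n$, fix any $p\in\mathcal{P}$, and set $p^i:=p^{(id,x_i)}$. Cancellation on the first component ($\psi x_j p_1=x_k p_1\Rightarrow\psi x_j=x_k$) shows these lie in $m$ distinct $U$-orbits, so they may all be taken as representatives and one may assign $F(p^i)=id$ for every $i\in[m]$. Then for $\psi=\sigma x_j$ with $\sigma\in W$ and $j\neq 1$ one computes $F(p^{(id,\psi)})=\sigma F(p^j)=\sigma$, whereas $\psi F(p)=\psi$, and $\sigma\neq\psi$ because $\sigma\in W$ while $\psi\notin W$. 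This is exactly the ``engineering'' you left open: by giving the \emph{same} value $id$ to all $m$ coset-translates of a single profile, the value of $F$ on the whole $(\{id\}\times S_n)$-orbit of $p$ is forced to lie in $W$, which is incompatible with neutrality under any $\psi\notin W$. Your constant-profile assignment can simply be discarded; it plays no role. With this replacement the remainder of your argument (and the ``in particular'' clause, which is immediate once every subgroup of $\{id\}\times S_n$ is realized) goes through.
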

\proof 
Let $U=\{id\} \times W$, for some $W\leq S_n$, be a subgroup of $\{id\} \times S_n$. If  $W=S_n$, then by \eqref{Di} we have that 
 the dictatorship $D_1$ of the individual $1$ satisfies $G_2(D_1)= {id} \times S_n$. Assume next that $W<S_n$, so that 
 $m=|S_n:W|\geq 2$. Let $x_1=id, \dots, x_m \in S_n$ be a set of representatives for the right cosets of $W$ in $S_n$. Fix a profile $p \in \mathcal{P}$ and put $p^i:=p^{(id,x_i)}$ for all $i \in [m]$. We claim that each $p^i$ belongs to a different $U$-orbit. Assume, by contradiction, that there exist two different indices $j,k \in [m]$ and $\psi \in W$ such that $p^{j(id,\psi)}=p^k$. Then, by\eqref{azione}, we have that $p^{(id,\psi x_j)}=p^{(id,x_k)}$ and hence, looking at the first component of the preference profiles in that equality, we get $\psi x_j p_1=x_kp_1$. By cancellation in the group $S_n$, we then get $\psi x_j=x_k$,  a contradiction.
 
Thus, each $p^1, \dots, p^m$ can be chosen as representative for its $U$-orbit. Complete now this list to a ordered set $(p^j)_{j=1}^R\in  \mathfrak{S}(U)$ of rappresentative for all the $U$-orbits. By Corollary \ref{reg-sub-Sn}, we have that $U \in \mathcal{R}$. Hence, by Theorem \ref{costruzione}, there exists a unique $F \in \mathcal{F}^U$ such that $F(p^j)=id$ for all $j \in [m]$.
Surely we have $G_2(F) \geq U$. In order to show that  equality holds  we just need to show that if $\psi \in S_n \setminus W$, then $F(p^{(id, \psi)}) \neq \psi F(p)$. Let  $\psi \in S_n \setminus W$. Then there exist $\sigma \in W$ and $j \neq 1$ such that $\psi=\sigma x_j$ and, from the $U$-symmetry of $F$, we have
$$F(p^{(id,\psi)})=F(p^{(id,\sigma x_j)})=F((p^{(id,x_j)})^{(id,\sigma)})=\sigma F(p^{j})=\sigma\neq \psi=\psi F(p^1)=\psi F(p).$$
\endproof

We note two immediate consequences of Theorem \ref{neutralita} .
\begin{corollary}\label{full-neut}
Every voting pair $(h,n)$ is fully neutral.
\end{corollary}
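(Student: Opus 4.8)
The plan is to obtain Corollary \ref{full-neut} as a direct reading of Theorem \ref{neutralita} in the language introduced just after Definition \ref{def_gruppodi}. Recall that a voting pair $(h,n)$ is, by definition, fully neutral precisely when ${\rm SPFNG}_h(n)=\{U\leq\{id\}\times S_n\}$. So the task splits into two inclusions, one of which is free.

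First I would note the inclusion ${\rm SPFNG}_h(n)\subseteq\{U\leq\{id\}\times S_n\}$, which holds for every voting pair and requires nothing new: by item 2) of Definition \ref{def_gruppodi}, a group can belong to ${\rm SPFNG}_h(n)$ only if it is a subgroup of $\{id\}\times S_n$ (equivalently, by Proposition \ref{legami}\,(ii), $G_2(F)\leq\{id\}\times S_n$ for every $F\in\mathcal{F}$). Then, for the reverse inclusion, I would simply invoke Theorem \ref{neutralita}: given any $U\leq\{id\}\times S_n$, that theorem produces an $F\in\mathcal{F}_h(n)$ with $G_2(F)=U$, so $U\in{\rm SPFNG}_h(n)$; hence $\{U\leq\{id\}\times S_n\}\subseteq{\rm SPFNG}_h(n)$.

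Combining the two inclusions gives ${\rm SPFNG}_h(n)=\{U\leq\{id\}\times S_n\}$, i.e.\ $(h,n)$ is fully neutral; since $(h,n)$ was an arbitrary voting pair, every voting pair is fully neutral. There is no genuine obstacle at this stage: all the work has already been carried out in the proof of Theorem \ref{neutralita} (the construction of the representatives $p^i=p^{(id,x_i)}$ over coset representatives of $W$ in $S_n$, the regularity of $\{id\}\times W$ via Corollary \ref{reg-sub-Sn}, and the application of Theorem \ref{costruzione}). The corollary is essentially a translation of that theorem into the terminology of fully neutral voting pairs, and the proof will accordingly be a two-line deduction.
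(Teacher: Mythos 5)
Your proof is correct and matches the paper's intent exactly: the paper states this corollary as an immediate consequence of Theorem \ref{neutralita}, and your two inclusions (the trivial one from Definition \ref{def_gruppodi} and the substantive one from the theorem) are precisely that deduction spelled out.
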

\begin{corollary}\label{full-neut2}
The following equalities hold:
$${\rm SPFNG}_h(n)=\{\{id\} \times W: W\leq S_n\}= {\rm SPFNG}(n).$$
Moreover,
$${\rm SPFNG}_h={\rm SPFNG}=\{\{id\} \times W: W\  \hbox{is a permutation group}\}.$$ 
\end{corollary}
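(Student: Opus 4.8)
The plan is to deduce everything as a routine consequence of Theorem \ref{neutralita} together with the definitions of the derived set-families in Definition \ref{def_gruppodi}. The only real content is Theorem \ref{neutralita}; Corollary \ref{full-neut} is just its restatement via the notion of ``fully neutral'' voting pair, and Corollary \ref{full-neut2} is a bookkeeping exercise with the unions over $h$ and $n$.

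For Corollary \ref{full-neut}: by definition, $(h,n)$ is fully neutral if ${\rm SPFNG}_h(n)=\{U\leq \{id\}\times S_n\}$. Theorem \ref{neutralita} says precisely that every subgroup of $\{id\}\times S_n$ is a neutrality group, i.e.\ the inclusion $\{U\leq \{id\}\times S_n\}\subseteq {\rm SPFNG}_h(n)$ holds; the reverse inclusion ${\rm SPFNG}_h(n)\subseteq \{U\leq \{id\}\times S_n\}$ is immediate from Definition \ref{def_gruppodi}\,(2), which only admits subgroups of $\{id\}\times S_n$ as neutrality groups in the first place. Hence the two sets coincide for every $(h,n)$, which is the claim.

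For Corollary \ref{full-neut2}: the first displayed chain of equalities. By Corollary \ref{full-neut} we have ${\rm SPFNG}_h(n)=\{\{id\}\times W:W\leq S_n\}$ for every fixed $h$; in particular the right-hand side does not depend on $h$, so taking the union over $h$ gives ${\rm SPFNG}(n):=\bigcup_h {\rm SPFNG}_h(n)=\{\{id\}\times W:W\leq S_n\}$ as well. For the second display, recall ${\rm SPFNG}_h:=\bigcup_n {\rm SPFNG}_h(n)$ and ${\rm SPFNG}:=\bigcup_{h,n}{\rm SPFNG}_h(n)$. Substituting ${\rm SPFNG}_h(n)=\{\{id\}\times W:W\leq S_n\}$, the union over $n$ (and over $h$, which does nothing) ranges over all subgroups of all finite symmetric groups $S_n$ with $n\geq 2$; since any finite permutation group is, up to the relabelling implicit in the notation, a subgroup of some $S_n$, this union is exactly $\{\{id\}\times W:W\text{ is a permutation group}\}$. (One should note in passing that the degenerate cases $n=0,1$ contribute nothing beyond the trivial group, which is already present, so no subtlety arises there.)

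There is essentially no obstacle here: the work was done in Theorem \ref{neutralita}, and the corollaries are purely formal unpackings of the definitions of ``fully neutral'' and of the unioned families ${\rm SPFNG}_h$, ${\rm SPFNG}(n)$, ${\rm SPFNG}$. The only point deserving a word of care is the meaning of ``permutation group'' in the last equality — it should be read up to isomorphism, or more precisely as ``$\{id\}\times W$ with $W$ a subgroup of some finite symmetric group'' — but the excerpt's own phrasing in Corollary \ref{full-neut2} already adopts this reading, so nothing further need be said.
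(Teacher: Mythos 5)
Your proposal is correct and follows exactly the route the paper intends: the paper offers no written proof, simply labelling both corollaries "immediate consequences of Theorem \ref{neutralita}", and your unpacking (both inclusions for fixed $(h,n)$, independence of $h$, then the unions over $h$ and $n$) is precisely that argument made explicit. Nothing is missing.
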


\section{The anonymity problem}\label{sec-anonim}

The anonymity problem is with no doubt harder than the neutrality problem. One thing is certain: it is false that, for every voting pair $(h,n)$, every subgroup of $S_h \times \{id\}$ is an anonymity group. We propose a crucial example.
Recall that the Klein group is the subgroup  of $S_4$ given by
$$K:=\{id, (12)(34), (13)(24), (14)(23)\}.$$
\begin{proposition} \label{quadrinomio}
Let $(h,n)=(4,2)$ and  $U=K \times \lbrace id \rbrace$ where $K\leq S_4$ is the Klein group. Then $U$ is a regular subgroup which is not a symmetry group nor an anonymity group.
\end{proposition}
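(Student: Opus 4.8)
First I would dispatch the easy half: showing that $U = K \times \{id\}$ is regular. Since $U \leq S_4 \times \{id\}$, every element $(\varphi,\psi) \in U$ has $\psi = id$, so condition $(b)$ of Theorem \ref{Caratterizzazione} is vacuously satisfied; hence $U \in \mathcal{R}_4(2)$. Equivalently, any subgroup of $S_h \times \{id\}$ stabilizes profiles only with trivial second component. This also tells us, via Theorem \ref{teofon}, that $\mathcal{F}^U \neq \varnothing$, so the content of the proposition is entirely about the \emph{impossibility} of realizing $U$ as $G_1(F)$ (and, by Corollary \ref{co-an-sim}, equivalently as $G(F)$, since $U \leq S_h \times \{id\}$ and Proposition \ref{simmetria e anonim} makes the two notions coincide here).

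The heart of the matter is to show $U \notin {\rm SPFAG}_4(2)$. Since $n = 2$, an {\sc spf} is just a function $F : (S_2)^4 \to S_2 \cong \{0,1\}$, i.e.\ a Boolean function on $4$ bits (identify each $p_i \in \{id,(12)\}$ with a bit). The anonymity group $G_1(F)$ consists of the permutations $\varphi \in S_4$ of the coordinates that fix $F$. So the claim becomes: there is no Boolean function $f : \{0,1\}^4 \to \{0,1\}$ whose group of coordinate-permutation symmetries is exactly the Klein four-group $K$ (acting on the four coordinates as the regular representation, i.e.\ as $\{id,(12)(34),(13)(24),(14)(23)\}$). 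The plan is to argue by contradiction: suppose $\mathrm{Sym}(f) = K$. I would analyze the action of $K$ on $\{0,1\}^4$: the orbits have sizes $1,1,2,2,2,2,2,2,2,2$ — namely $\{0000\}$, $\{1111\}$ fixed, and eight orbits of size two — and $f$ must be constant on each orbit while being non-constant under every element of $S_4 \setminus K$. The key combinatorial observation is that $K$ is \emph{not} the full stabilizer of any such orbit-respecting partition: one checks that any partition of $\{0,1\}^4$ into $K$-orbits is automatically invariant under a strictly larger subgroup of $S_4$. Concretely, I expect the decisive point to be that the eight weight-preserving $2$-element orbits come in a pattern forced by $K$ that is also respected by at least one transposition (or by $A_4$, or by all of $S_4$), because the only subgroups of $S_4$ properly between nothing and $S_4$ that can be realized as $\mathrm{Sym}(f)$ for a Boolean $f$ on $4$ variables are restricted; a Klein four-group sitting inside $A_4$ as a normal subgroup is ``too symmetric to be alone.'' One clean way to see this: the number of $1$'s at each Hamming weight level $0,1,2,3,4$ is an invariant of $\mathrm{Sym}(f)$-orbits, and if $f$ is $K$-invariant then the weight-$1$ and weight-$3$ levels (each a single $S_4$-orbit of size $4$) are either all-$0$ or all-$1$ under $f$ — forcing $S_4$-symmetry there — while the weight-$2$ level (size $6$) decomposes under $K$ into $3$ orbits of size $2$, and a case check on which of these three pairs get value $1$ shows every choice is stabilized by some transposition in $S_4$.

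The main obstacle, therefore, is the finite but slightly delicate case analysis on the weight-$2$ layer: verifying that none of the $2^3 = 8$ ways of $K$-equivariantly $2$-coloring the six weight-$2$ vectors produces a coloring whose stabilizer in $S_4$ is exactly $K$ rather than something larger. I would organize this by noting that $S_4$ acts on the three $K$-orbits in the weight-$2$ layer as the quotient $S_4/K \cong S_3$, permuting them transitively; hence any $S_3$-orbit-respecting $2$-coloring of a $3$-element set is preserved by a point-stabilizer of size at least $2$ in $S_3$, which pulls back to a subgroup of $S_4$ strictly containing $K$. Combining this with the forced behavior on weights $0,1,3,4$ shows $\mathrm{Sym}(f) \supsetneq K$, the desired contradiction. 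Finally, since $U \leq S_h \times \{id\}$, Proposition \ref{simmetria e anonim} (the equivalence $(a)\Leftrightarrow(b)$) upgrades ``$U \notin {\rm SPFAG}_4(2)$'' to ``$U \notin {\rm SPFSG}_4(2)$'', completing the proof.
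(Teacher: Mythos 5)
Your proposal is correct and follows essentially the same route as the paper: both arguments reduce to the observation that the weight-two layer of $\{0,1\}^4$ splits into three $K$-orbits of size two, so any $K$-invariant $\{0,1\}$-valued assignment gives two of them the same value and is therefore preserved by a transposition outside $K$ (the paper exhibits $(23)$, $(13)$ or $(12)$ explicitly; you phrase it via the quotient $S_4/K\cong S_3$ acting on the three orbits), while the remaining layers are single orbits on which every coordinate permutation acts harmlessly. One slip to fix: you list the $K$-orbit sizes as $1,1$ together with eight orbits of size two (summing to $18$, not $16$); the correct decomposition, which you in fact use later, is two singletons, a single orbit of size four at each of weights one and three, and three orbits of size two at weight two.
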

\proof 
Denote the two possibile linear orders in two alternatives by the symbols $0,1$ so that $\mathcal{P}= \lbrace 0,1 \rbrace^4$. The group $U$ acts on $\mathcal{P}$ permuting the ordered strings of four elements in $\lbrace 0,1 \rbrace$ and it is easily checked that there are $7$ orbits $\mathcal{O}_i$, for $i\in[7]$. We collect first those concerning preference profiles with  a different number of $0$ and $1$. 
\begin{flushleft}
$\mathcal{O}_1= \lbrace [1111]\rbrace,$
\quad
$\mathcal{O}_2=\lbrace [0000] \rbrace,$
\quad
$\mathcal{O}_3= \lbrace [1110],[0111],[1011],[1101] \rbrace,$
\quad

$ $

$\mathcal{O}_4= \lbrace [1000],[0100],[0010],[0001]\rbrace.$
\end{flushleft}

Then those concerning preference profiles with two $0$ and two $1$.
\begin{flushleft}
$\mathcal{O}_5=\lbrace [1100],[0011] \rbrace, \quad \mathcal{O}_6=\lbrace [1010],[0101] \rbrace, \quad \mathcal{O}_7=\lbrace [0110],[1001] \rbrace.$
\end{flushleft}

$U$ is  regular because $U\leq S_h \times \{ id \}$ and thus $\mathcal{F}^U\neq \varnothing$. Pick $F \in \mathcal{F}^U$. We show that  we necessarily have $G(F)>U$. 
Since $F$ is $U$-symmetric, we have that $F$ assumes the same value on every $U$-orbit. Let $x_i\in \{0,1\}$  be the values assumed by $F$ on $ \mathcal{O}_i$, for $i\in \{5,6,7\}$. Then at least two among the $x_5,x_6,x_7$ must be equal. 
Assume first that $x_5=x_6$ and consider $g:=((23),id)\in G\setminus U$. We show that $g \in G(F)$. Note that if $i \in [7] \setminus \{ 4,5 \}$, then $p \in \mathcal{O}_i$ implies  $p^g \in \mathcal{O}_i$ and thus $F(p^g)=F(p)$.
Moreover, if $p \in \mathcal{O}_5$, then $p^g \in \mathcal{O}_6$ and we have
$
F(p^g)=x_6=x_5=F(p).
$
Similarly if  $p \in \mathcal{O}_6$, then $p^g \in \mathcal{O}_5$ and we have
$
F(p^g)=x_5=x_6=F(p).
$
If $x_5=x_7$ or $x_6=x_7$ the proof is the same using respectively $g:=((13),id)$ or $g:=((12),id)$. 

That shows that $U$ is not a symmetry group. Now, by Corollary \ref{co-an-sim}, $U$ is not an anonymity group too.
\endproof

Kelly in \cite[p.18]{Kelly92} conjectures that every subgroup of $S_h \times \{id\}$ is instead an anonymity group for  a particular type of collective choices, called representatives systems. 

As a consequence of the above example we can make clear a fundamental difference between ${\rm SPFNG}_h(n)$ and ${\rm SPFAG}_h(n)$,  or ${\rm SPFSG}_h(n)$.
\begin{corollary}\label{an-no-sc} 
\begin{itemize}
\item[$(i)$] There exist pairs $(h,n)$ such that the set ${\rm SPFAG}_h(n)$  is not subgroup-closed.
\item[$(ii)$]   There exist pairs $(h,n)$ such that the set ${\rm SPFSG}_h(n)$ is not subgroup-closed.
\end{itemize}
\end{corollary}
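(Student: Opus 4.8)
The plan is to exhibit a single voting pair that witnesses both failures at once, namely $(h,n)=(4,2)$, and to compare two permutation groups of the form $V\times\{id\}$ with $V\leq S_4$, one contained in the other, that have opposite membership status. Concretely, I would take $W:=S_4\times\{id\}$ and $U:=K\times\{id\}$, where $K\leq S_4$ is the Klein four-group, and observe that $U\leq W$.

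First I would record that $W=S_4\times\{id\}$ lies in both ${\rm SPFAG}_4(2)$ and ${\rm SPFSG}_4(2)$. Membership in ${\rm SPFAG}_4(2)$ is Example \ref{simmalterno}$(b)$: the constant {\sc spf} $F\equiv id$ has $G_1(F)=S_h\times\{id\}$. Membership in ${\rm SPFSG}_4(2)$ then follows from Corollary \ref{co-an-sim}, since every anonymity group is a symmetry group. Next I would invoke Proposition \ref{quadrinomio}, which states precisely that for the pair $(4,2)$ the group $U=K\times\{id\}$ is neither an anonymity group nor a symmetry group, i.e. $U\notin{\rm SPFAG}_4(2)$ and $U\notin{\rm SPFSG}_4(2)$. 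Since $U\leq W$, this shows ${\rm SPFAG}_4(2)$ contains $W$ but not its subgroup $U$, proving $(i)$ with the pair $(h,n)=(4,2)$; and likewise ${\rm SPFSG}_4(2)$ contains $W$ but not $U$, proving $(ii)$.

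There is no genuine obstacle in this statement: all the real content sits in Proposition \ref{quadrinomio}, whose proof enumerates the seven $K$-orbits on $\{0,1\}^4$ and then argues by pigeonhole on the three balanced orbits $\mathcal{O}_5,\mathcal{O}_6,\mathcal{O}_7$ that any $U$-symmetric {\sc spf} is forced to acquire an extra transposition $((i\,j),id)$ in its symmetry group, so that $G(F)>U$ always holds. Granting that proposition together with Example \ref{simmalterno}$(b)$ and Corollary \ref{co-an-sim}, the only remaining verification here is the trivial inclusion $K\times\{id\}\leq S_4\times\{id\}$, after which both parts of the corollary are immediate.
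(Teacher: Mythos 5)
Your proposal is correct and follows essentially the same route as the paper: the pair $(4,2)$ with $S_4\times\{id\}$ an anonymity (hence symmetry) group by Example \ref{simmalterno}$(b)$, while its subgroup $K\times\{id\}$ is neither by Proposition \ref{quadrinomio}. The only cosmetic difference is that you cite Corollary \ref{co-an-sim} where the paper cites Proposition \ref{simmetria e anonim}, which are interchangeable here.
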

\proof Consider $(h,n)=(4,2)$. By Example \ref{simmalterno}\, $(b)$, the group $S_4\times \{id\}$ is an anonymity group while, by Proposition \ref{quadrinomio}, its subgroup $K \times \lbrace id \rbrace$ is not an anonymity group.
In order to deal with  symmetry groups note that, from Proposition \ref{simmetria e anonim}, the group $S_4\times \{id\}$ is a symmetry group while  $K \times \lbrace id \rbrace$ is not.
\endproof

\subsection{The role of small stabilizers and a sufficient condition}

In oder to recognize the groups in ${\rm SPFAG}_h(n)$ it is somewhat important to take into account the stabilizers of preference profiles under the action of $S_h \times \{id\}$. Those stabilizers are immediately described thanks to the following definition. Let $(h,n)$ be a voting pair and let $\{H_1,\dots, H_k\}$ be a partition of $H$ into $2\leq k\leq \min\{h,n!\}$ parts. We call the group $\times_{i\in[k]}S_{H_i}$ a complete intransitive subgroup of $S_h$ in $k$ blocks. 

\begin{remark} \label{stabilizzatori} Let $(h,n)$ be a voting pair. A subgroup $V \times \{id\}$ of $S_h \times \{id\}$ is the stabilizer of some $p\in \mathcal{P}$ if and only if $V$ is a complete intransitive subgroup of $S_h$ in $k$ blocks, for some $2\leq k\leq \min\{h,n!\}$, or $V=S_h.$
\end{remark}

We fix now the following notation.

(\dag)  If $V < S_h$, we set $|S_h:V|=m+1$, $m\geq 1$ and we let $\varphi_0=id, \varphi_1,\dots, \varphi_m\in S_h$ be a set of representatives for the right $m+1$ cosets of $V$ in $S_h$.

\begin{lemma} \label{Vqualsiasi} Let  $V < S_h$, $U= V \times \{id\}$ and $p \in \mathcal{P}$. Then $p^{S_h \times \{id\}}$ splits into the $U$-orbits $\{(p^{(\varphi_j,id)})^U:j\in [m]_0\}$. Those orbits are not necessarily distinct. Up to a reordering of the $\varphi_j$, with $j\in [m]$, there exists a system of representatives for such orbits  given by $\{p^{(\varphi_j,id)}:j \in [s]_0\}$ for a suitable $0\leq s\leq m$.

\end{lemma}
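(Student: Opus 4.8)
The plan is to exploit that the $U$-orbits contained in a fixed $S_h\times\{id\}$-orbit are precisely the $\langle V\rangle$-orbits obtained by restricting the (transitive) action of $S_h$ on $p^{S_h\times\{id\}}$. First I would observe that $S_h\times\{id\}$ acts transitively on $p^{S_h\times\{id\}}$ by the very definition of orbit, and that $U=V\times\{id\}$ is a subgroup of $S_h\times\{id\}$. By the standard fact that, when a group $K$ acts transitively on a set and $L\le K$, the $L$-orbits are in bijection with the double cosets $L\backslash K/\mathrm{Stab}_K(x)$ for any point $x$, one knows the $U$-orbits partition $p^{S_h\times\{id\}}$; but here it is cleaner to argue directly. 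Since the elements $\varphi_0=id,\varphi_1,\dots,\varphi_m$ form a system of representatives for the right cosets $V\varphi_j$ of $V$ in $S_h$, every $\varphi\in S_h$ can be written $\varphi=v\varphi_j$ with $v\in V$ and $j\in[m]_0$. Hence for every $q\in p^{S_h\times\{id\}}$, writing $q=p^{(\varphi,id)}$, we get $q=p^{(v\varphi_j,id)}=\big(p^{(\varphi_j,id)}\big)^{(v,id)}\in\big(p^{(\varphi_j,id)}\big)^U$ by \eqref{azione}. This shows $p^{S_h\times\{id\}}=\bigcup_{j\in[m]_0}\big(p^{(\varphi_j,id)}\big)^U$, which is the first assertion.

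Next I would note that two of these sets $\big(p^{(\varphi_j,id)}\big)^U$ and $\big(p^{(\varphi_k,id)}\big)^U$ are either equal or disjoint, since distinct $U$-orbits are always disjoint (they form a partition of $\mathcal{P}$, as recalled after Proposition \ref{prop:azio}). So the list $\big(p^{(\varphi_j,id)}\big)^U$, $j\in[m]_0$, contains each $U$-orbit inside $p^{S_h\times\{id\}}$ at least once, possibly with repetitions. To extract a genuine system of representatives, I would keep $\varphi_0=id$ (so that $p=p^{(\varphi_0,id)}$ is always a representative, matching the indexing $[s]_0$ which includes $0$), and then run through $\varphi_1,\dots,\varphi_m$ discarding any $\varphi_j$ whose orbit $\big(p^{(\varphi_j,id)}\big)^U$ has already appeared. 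After reordering the surviving indices among $\{1,\dots,m\}$ so that they come first, we are left with indices $0,1,\dots,s$ for some $0\le s\le m$, and by construction $\{p^{(\varphi_j,id)}:j\in[s]_0\}$ is a system of representatives for the $U$-orbits inside $p^{S_h\times\{id\}}$, with all the listed orbits now pairwise distinct.

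I do not expect any serious obstacle here: the statement is essentially a bookkeeping consequence of the coset decomposition $S_h=\bigsqcup_{j\in[m]_0}V\varphi_j$ together with the transitivity of $S_h\times\{id\}$ on a single orbit and the fact that $U$-orbits partition $\mathcal{P}$. The only mild subtlety is the direction of the action: since $\big(p^{(\varphi_1\varphi_2,id)}\big)=\big(p^{(\varphi_2,id)}\big)^{(\varphi_1,id)}$ in \eqref{azione}, one must write $\varphi=v\varphi_j$ (factor $v\in V$ on the \emph{left}), i.e.\ use \emph{right} cosets $V\varphi_j$, which is exactly the convention adopted in (\dag). With that convention the computation $p^{(v\varphi_j,id)}=\big(p^{(\varphi_j,id)}\big)^{(v,id)}$ goes through and places $q$ in the $U$-orbit of $p^{(\varphi_j,id)}$, as needed. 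The reordering and the choice of $s$ are then purely combinatorial.
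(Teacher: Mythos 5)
Your proposal is correct and follows essentially the same route as the paper: decompose $S_h$ into the right cosets $V\varphi_j$, use the identity $p^{(v\varphi_j,id)}=\big(p^{(\varphi_j,id)}\big)^{(v,id)}$ from \eqref{azione} to get $p^{S_h\times\{id\}}=\bigcup_{j\in[m]_0}\big(p^{(\varphi_j,id)}\big)^U$, and then discard repeated orbits (keeping $p=p^{(\varphi_0,id)}$ as one representative) to extract the system indexed by $[s]_0$. Your explicit remark about needing the factor $v\in V$ on the left, i.e.\ right cosets, is exactly the convention fixed in (\dag) and matches the paper's computation.
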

\begin{proof}  The fact that $p^{S_h \times \{id\}}$ splits into $U$-orbits is clear. We need to show, more precisely, that 
\begin{equation}\label{unione-orbite}
p^{S_h \times \{id\}}=\bigcup_{j\in [m]_0} (p^{(\varphi_j,id)})^U.
\end{equation}

Surely, for every $j\in [m]_0$, we have $(p^{(\varphi_j,id)})^U\subseteq (p^{(\varphi_j,id)})^{S_h \times \{id\}}=p^{S_h \times \{id\}}.$ Moreover, if $p^{(\varphi,id)}\in p^{S_h \times \{id\}}$, then there exists $j\in [m]_0$ such that $\varphi=v\varphi_j$ for some $v\in V$. It follows that $p^{(\varphi,id)}=(p^{(\varphi_j,id)})^{(v,id)}\in (p^{(\varphi_j,id)})^U.$ That shows \eqref{unione-orbite}.

Now, for $i\neq j \in [m]_0$, we have $(p^{(\varphi_i,id)})^U=(p^{(\varphi_j,id)})^U$ or $(p^{(\varphi_i,id)})^U\cap(p^{(\varphi_j,id)})^U=\varnothing.$ We can then isolate a subsets of $\{(p^{(\varphi_j,id)})^U:j\in [m]_0\}$ formed by orbits which are distinct and cover all the possible orbits.
Since one representative for the $U$-orbits in $p^{S_h \times \{id\}}$ can surely be chosen equal to $p$, there exists $s$, with $0\leq s\leq m$, such that up to a reorder of the $\varphi_j$, with $j\in [m]$, a system of representatives for the $U$-orbits in $p^{S_h \times \{id\}}$ is given by $\{p^{(\varphi_j,id)}:j \in [s]_0\}$.
\end{proof}

\begin{lemma} \label{stab dentro V} Let  $V < S_h$ and $U=V \times \{id\}$. If $p \in \mathcal{P}$ is such that
$\mathrm{Stab}_{S_h \times \{id\}}(p) \leq U$, then $p^U\neq (p^{(\varphi_j,id)})^U$ for all $j \in [m]$. In particular, the $s+1$ orbits of $U$ inside $p^{S_h \times \{id\}}$ are at least two.
\end{lemma}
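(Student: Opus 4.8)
The plan is a short proof by contradiction: the stabilizer hypothesis will force one of the coset representatives $\varphi_1,\dots,\varphi_m$ to lie in $V$, which is impossible by the choice made in (\dag).

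Concretely, I would first assume that $p^U=(p^{(\varphi_j,id)})^U$ for some $j\in[m]$. Then $p^{(\varphi_j,id)}$ lies in the $U$-orbit of $p$, so there is $v\in V$ with $p^{(\varphi_j,id)}=p^{(v,id)}$. Acting on both sides by $(v^{-1},id)$ and using \eqref{azione} of Proposition \ref{prop:azio} — namely $(p^{(v,id)})^{(v^{-1},id)}=p^{(v^{-1}v,id)}=p$ and $(p^{(\varphi_j,id)})^{(v^{-1},id)}=p^{(v^{-1}\varphi_j,id)}$ — I obtain $p^{(v^{-1}\varphi_j,id)}=p$. Hence $(v^{-1}\varphi_j,id)\in\mathrm{Stab}_{S_h\times\{id\}}(p)\leq V\times\{id\}$, so $v^{-1}\varphi_j\in V$ and therefore $\varphi_j\in vV=V$. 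But in (\dag) the right cosets $V\varphi_0=V,\,V\varphi_1,\dots,V\varphi_m$ are pairwise distinct, so $\varphi_j\in V$ forces $V\varphi_j=V=V\varphi_0$, i.e.\ $j=0$, contradicting $j\in[m]$. This proves $p^U\neq(p^{(\varphi_j,id)})^U$ for every $j\in[m]$.

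For the final assertion I would appeal to Lemma \ref{Vqualsiasi}: after a reordering of $\varphi_1,\dots,\varphi_m$, the set $\{p^{(\varphi_j,id)}:j\in[s]_0\}$ is a system of representatives of the $U$-orbits contained in $p^{S_h\times\{id\}}$, so there are exactly $s+1$ such orbits. Since the statement just proved concerns the whole set $\{\varphi_1,\dots,\varphi_m\}$, it is unaffected by the reordering; in particular $p^U=(p^{(\varphi_0,id)})^U$ is distinct from each $(p^{(\varphi_j,id)})^U$ with $1\leq j\leq s$. It then remains to rule out $s=0$: if $s=0$ then $p^{S_h\times\{id\}}=p^U$, and since $m\geq1$ the profile $p^{(\varphi_1,id)}$ lies in $p^{S_h\times\{id\}}=p^U$ by \eqref{unione-orbite}, giving $(p^{(\varphi_1,id)})^U=p^U$ with $1\in[m]$, against what was shown. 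Hence $s\geq1$ and the number $s+1$ of $U$-orbits inside $p^{S_h\times\{id\}}$ is at least two.

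I do not expect a genuine obstacle here: the computation is just an application of the action law \eqref{azione}, and the only point deserving care is the coset convention — the relevant cosets are the right cosets $V\varphi_j$, consistently with the way $\varphi=v\varphi_j$ enters Lemma \ref{Vqualsiasi} — together with the observation that the reordering allowed by Lemma \ref{Vqualsiasi} cannot interfere with the claim, since the claim is symmetric in $\varphi_1,\dots,\varphi_m$.
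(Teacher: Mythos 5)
Your proof is correct and follows essentially the same route as the paper: assume $(p^{(\varphi_j,id)})^U=p^U$, extract $v\in V$ with $p^{(\varphi_j,id)}=p^{(v,id)}$, use \eqref{azione} to place $(v^{-1}\varphi_j,id)$ in $\mathrm{Stab}_{S_h\times\{id\}}(p)\leq V\times\{id\}$, and conclude $\varphi_j\in V$, contradicting the coset choice in (\dag). Your extra care in ruling out $s=0$ for the final assertion is a detail the paper leaves implicit, but it does not change the argument.
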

\begin{proof}  Let $p\in\mathcal{P}$ be such that $\mathit{Stab_{S_h \times \{id\}}(p)} \leq V \times \{id\}$  and assume, by contradiction, that there exists $j \in [m]$ such that $(p^{(\varphi_j,id)})^U=p^U$. Then we have $p^{(\varphi_j,id)} = p^{(\varphi,id)}$, for some $\varphi\in V$ and therefore, using \eqref{azione}, we get that $(\varphi^{-1} \varphi_j, id) \in \mathrm{Stab}_{S_h \times \{id\}}(p) \leq U$. So $\varphi^{-1}\varphi_j \in V$  and hence also $\varphi_j \in V$, a contradiction.
\end{proof}

 The next theorem is  one of the main result of the paper about the anonymity groups. It expresses a sufficient condition for being a group an anonymity group.
 
\begin{theorem} \label{rappresentabili_Sh}
Let $V < S_h$ be such that there exists $p \in \mathcal{P}$ with $\mathrm{Stab}_{S_h \times \{id\}}(p) \leq V \times \{id\}$. Then $V \times \{id\}$ is an anonymity group.
\end{theorem}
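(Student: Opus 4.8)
The plan is to use Theorem \ref{costruzione} to build, by hand, a $(V\times\{id\})$-symmetric {\sc spf} $F$ whose anonymity group is exactly $U=V\times\{id\}$. Let $p\in\mathcal{P}$ be a profile with $\mathrm{Stab}_{S_h\times\{id\}}(p)\leq U$, and adopt notation (\dag). By Lemma \ref{Vqualsiasi}, the $U$-orbits inside $p^{S_h\times\{id\}}$ have a system of representatives $\{p^{(\varphi_j,id)}:j\in[s]_0\}$ for some $0\leq s\leq m$, and by Lemma \ref{stab dentro V} we have $s\geq 1$, so there are at least two distinct $U$-orbits $p^U$ and $(p^{(\varphi_1,id)})^U$ inside the single $(S_h\times\{id\})$-orbit $p^{S_h\times\{id\}}$. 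Since $U\leq S_h\times\{id\}$, $U\in\mathcal{R}$ by Corollary \ref{reg-sub-Sn}, so $\mathcal{F}^U\neq\varnothing$ and Theorem \ref{costruzione} applies.

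First I would complete $\{p,p^{(\varphi_1,id)}\}$ to a system of representatives $(p^j)_{j=1}^R\in\mathfrak{S}(U)$ of all $U$-orbits, with $p^1:=p$ and $p^2:=p^{(\varphi_1,id)}$. Fix any $\sigma\in S_n$ with $\sigma\neq id$ (possible since $n\geq 2$). By Theorem \ref{costruzione}$(i)$ there is a unique $F\in\mathcal{F}^U$ with $F(p^1)=id$, $F(p^2)=\sigma$, and $F(p^j)=id$ for all $j\geq 3$. Since $F$ is $U$-symmetric we automatically get $G_1(F)\geq U$, so it remains to prove the reverse inclusion $G_1(F)\leq U$; equivalently, for every $(\varphi,id)\in(S_h\times\{id\})\setminus U$ I must exhibit a profile $q$ with $F(q^{(\varphi,id)})\neq F(q)$.

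The natural candidate is $q:=p$ itself. Given $(\varphi,id)\notin U$, write $\varphi=v\varphi_j$ for a unique $j\in[m]$ and some $v\in V$ (here $j\neq 0$ precisely because $\varphi\notin V$). Then $p^{(\varphi,id)}=(p^{(\varphi_j,id)})^{(v,id)}$ lies in the $U$-orbit $(p^{(\varphi_j,id)})^U$, which by Lemma \ref{stab dentro V} is distinct from $p^U$ and hence is $(p^{j'})^U$ for a unique representative index $j'\geq 2$ (after the reordering, $(p^{(\varphi_j,id)})^U=(p^{(\varphi_{k},id)})^U$ for some $k\in[s]$). The subtlety — and the step I expect to be the main obstacle — is that $F$ takes the value $\sigma\neq id$ only on the orbit $p^2=(p^{(\varphi_1,id)})^U$, so if $\varphi_j$ happens to land in a different $U$-orbit $(p^{k})^U$ with $k\geq 3$, then $F(p^{(\varphi,id)})=id=F(p)$ and the argument fails for that $\varphi$. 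To fix this I would instead set $F$ to equal $\sigma$ on \emph{all} the orbits $(p^j)^U$ with $j\geq 2$ that are contained in $p^{S_h\times\{id\}}$ (that is, on $(p^{(\varphi_k,id)})^U$ for $k\in[s]$), and $id$ elsewhere including on $p^1=p$. Then $F$ is still a well-defined $U$-symmetric {\sc spf} by Theorem \ref{costruzione}, and for any $(\varphi,id)\notin U$ the profile $p^{(\varphi,id)}$ lies in one of these ``$\sigma$-orbits'' while $p$ lies in the ``$id$-orbit'' $p^U$, giving $F(p^{(\varphi,id)})=\sigma\neq id=F(p)$, as required. Hence $G_1(F)=U$ and $U\in{\rm SPFAG}_h(n)$.

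Two small points must be checked to make the last argument airtight: that $\mathcal{K}$, the constant profiles, do not interfere — but each constant profile is $(S_h\times\{id\})$-fixed, hence forms its own $(S_h\times\{id\})$-orbit, which cannot equal the non-singleton orbit $p^{S_h\times\{id\}}$ (non-singleton because $s\geq1$), so the values of $F$ on $\mathcal{K}$ are irrelevant to the displacement argument; and that distinct $\varphi_j$ may give the same $U$-orbit, which is harmless since we only need $p^{(\varphi,id)}$ to land in \emph{some} orbit on which $F=\sigma$. With these remarks the proof is complete.
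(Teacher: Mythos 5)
Your proposal is correct and, after the self-correction in the middle, arrives at exactly the paper's construction: assign $\sigma$ on all the $U$-orbits $(p^{(\varphi_k,id)})^U$ with $k\in[s]$ inside $p^{S_h\times\{id\}}$ and $id$ everywhere else, then use Lemmas \ref{Vqualsiasi} and \ref{stab dentro V} to force $F(p^{(\varphi,id)})=\sigma\neq id=F(p)$ for any $(\varphi,id)\notin U$. The argument matches the paper's proof in both structure and detail.
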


\proof
Let $U:=V \times \{id\}$. By Lemma \ref{Vqualsiasi},  $p^{S_h \times \{id\}}$ splits into the $U$-orbits $\{(p^{(\varphi_j,id)})^U:j\in [m]_0\}$. Moreover, by Lemma \ref{stab dentro V}, we have $p^U \neq p^{(\varphi_j,id)U}$  for all $j \in [m]$ and there are at least two $U$-orbits in $p^{S_h \times \{id\}}$.
However, observe that we cannot exclude $p^{(\varphi_j,id)U}=p^{(\varphi_k,id)U}$, for suitable $j, k\in [m]$ with $j\neq k.$ By Lemma \ref{Vqualsiasi}, up to reordering, there exists $s$, with $ 1\leq s\leq m$, such that $p, p^{(\varphi_1,id)},\dots, p^{(\varphi_s,id)}$ is a list of representatives for the $U$-orbits on $\mathcal{P}$ contained in $p^{S_h \times \{id\}}$.

Complete now to a full system of representatives of the $U$-orbits on $\mathcal{P}$ adding further suitable representatives $p^{s+1},\dots, p^R$ of the remaining $U$-orbits.
Fix next $\sigma\in S_n\setminus\{id\}$ and  assign
\begin{center}
	$F(p)=id$, \quad \quad $F(p^{(\varphi_j,id)})=\sigma$ \quad for $j \in [s]$, \quad \quad $F(p^{t})=id$\quad for  $t \in \{s+1,\dots, R\}.$
\end{center}
By Theorem \ref{costruzione}, there exists a unique $F \in \mathcal{F}^{U}$ extending that assignment and, obviously,
$G_1(F) \geq U$. We show that equality holds. Suppose, by contradiction, that there exists $(\varphi,id) \in G_1(F) \setminus U$.
Then $\varphi=\bar\varphi \varphi_j$, for  some $j \in [m]$ and $\bar\varphi \in V$. We claim that $F(p^{(\varphi_j,id)})=\sigma.$ This follows by the definition of $F$ when $j\in [s]$.
 Assume next that $j\geq s+1$. Then, by Lemma \ref{Vqualsiasi}, there exists $k\in[s]$ such that $(p^{(\varphi_j,id)})^U=(p^{(\varphi_k,id)})^U$. As a consequence, we have $p^{(\varphi_j,id)}=(p^{(\varphi_k,id)})^{(v,id)}$ for some $v\in V.$ Hence, recalling that  $F \in \mathcal{F}^{U}$ and using \eqref{azione}, we have 
$$F(p^{(\varphi_j,id)})=F((p^{(\varphi_k,id)})^{(v,id)})=F(p^{(\varphi_k,id)})=\sigma.$$
 Taking again into account the fact that $F \in \mathcal{F}^{U}$ and \eqref{azione}, we now easily compute
 $$F(p^{(\varphi,id)})=F(p^{(\bar\varphi\varphi_j,id)})=F((p^{(\varphi_j,id)})^{(\bar\varphi,id)})=F(p^{(\varphi_j,id)})=\sigma.$$
On the other hand, since $(\varphi,id) \in G_1(F)$,  we also have $F(p^{(\varphi,id)})=F(p)=id$, a contradiction.
\endproof
Note that the above theorem is not invertible. Indeed let $(h,n)$ be a voting pair with $h>n!$. We know, by Proposition \ref{1sym} , that the trivial group is an anonymity group. On the other hand, every $p\in \mathcal{P}$ admits at least two equal components and thus $\mathrm{Stab}_{S_h \times \{id\}}(p)$ is not trivial.

\begin{corollary} \label{intransitivi anonimi}
 Let $V \leq S_h$  contain a complete intransitive subgroup of $S_h$ in $k$ blocks, with $2\leq k\leq \min\{h,n!\}$. Then $V \times \{id\}$ is an anonymity group. 
\end{corollary}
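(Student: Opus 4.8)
The plan is to reduce Corollary \ref{intransitivi anonimi} to Theorem \ref{rappresentabili_Sh} by exhibiting a preference profile whose stabilizer in $S_h\times\{id\}$ is small enough. Concretely, suppose $V\leq S_h$ contains a complete intransitive subgroup $W:=\times_{i\in[k]}S_{H_i}$ corresponding to a partition $\{H_1,\dots,H_k\}$ of $H$ with $2\leq k\leq\min\{h,n!\}$. Since $k\leq n!=|\mathcal{L}(N)|$, we may pick $k$ distinct linear orders $\sigma_1,\dots,\sigma_k\in S_n$ and define $p\in\mathcal{P}$ by setting $p_i=\sigma_t$ whenever $i\in H_t$. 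The key observation is that $\mathrm{Stab}_{S_h\times\{id\}}(p)$ consists exactly of those $(\varphi,id)$ that permute the fibres of the map $i\mapsto p_i$, and since the $\sigma_t$ are pairwise distinct these fibres are precisely the blocks $H_t$; hence $\mathrm{Stab}_{S_h\times\{id\}}(p)=W\times\{id\}$ by Remark \ref{stabilizzatori} (or by a direct one-line check). Because $W\leq V$, we get $\mathrm{Stab}_{S_h\times\{id\}}(p)\leq V\times\{id\}$.

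The next step is to dispose of the case $V=S_h$ separately, since Theorem \ref{rappresentabili_Sh} is stated for $V<S_h$: if $V=S_h$ then $V\times\{id\}=S_h\times\{id\}$ is an anonymity group by Example \ref{simmalterno}\,$(b)$. So we may assume $V<S_h$, and then Theorem \ref{rappresentabili_Sh} applies directly to the profile $p$ constructed above and yields that $V\times\{id\}$ is an anonymity group, which is exactly the assertion.

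I expect the only point requiring a little care — and the ``main obstacle'', though it is a mild one — is the verification that the stabilizer of the constructed $p$ is exactly $W\times\{id\}$ and in particular is contained in $V\times\{id\}$; this hinges on the distinctness of the chosen orders $\sigma_1,\dots,\sigma_k$, which is precisely where the hypothesis $k\leq n!$ is used, and on noting that any $\varphi$ fixing $p$ must send each $H_t$ to itself, so $(\varphi,id)\in W\times\{id\}\leq V\times\{id\}$. Everything else is a citation of already-established results (Theorem \ref{rappresentabili_Sh}, Example \ref{simmalterno}, Remark \ref{stabilizzatori}), so the proof is short. One could even phrase it so as to avoid the split on $V=S_h$ by invoking Remark \ref{stabilizzatori} to say $W\times\{id\}$ is itself a stabilizer and then applying Theorem \ref{rappresentabili_Sh} when $V<S_h$ and Example \ref{simmalterno}\,$(b)$ otherwise; I would keep the case distinction explicit for clarity.
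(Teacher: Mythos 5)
Your proposal is correct and follows essentially the same route as the paper: split off the case $V=S_h$ (handled by Example \ref{simmalterno}\,$(b)$), and for $V<S_h$ produce a profile whose stabilizer is the complete intransitive subgroup (the paper simply cites Remark \ref{stabilizzatori} for this, while you spell out the construction), then apply Theorem \ref{rappresentabili_Sh}. The extra detail you give on verifying the stabilizer is exactly the content of Remark \ref{stabilizzatori}, so there is no substantive difference.
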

\proof If $V=S_h$, we know that $S_h \times \{id\}$ is an anonymity group. Assume that $V<S_h$.  By Remark \ref{stabilizzatori}, there exists $p\in \mathcal{P}$ such that $\mathrm{Stab}_{S_h \times \{id\}}(p) \leq V \times \{id\}$ and, by Theorem \ref{rappresentabili_Sh}, we deduce that $V \times \{id\}$ is an anonymity group.
\endproof
\begin{corollary}\label{soli-completi}
Let $H_1, \dots, H_k$ be a partition of $H$ in $k$ parts, with $1 \leq k \leq \min\{h,n!\}$. Then the subgroup  $$(\times_{i=1}^k S_{H_i}) \times \{id\}$$ is an anonymity group.
\end{corollary}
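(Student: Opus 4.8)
The statement is essentially a repackaging of the two cases $k=1$ and $2\le k\le \min\{h,n!\}$, so the plan is to dispose of them separately. First I would observe that when $k=1$ the partition is the trivial one $\{H\}$, so that $\times_{i=1}^{1}S_{H_i}=S_H=S_h$ and the group in question is $S_h\times\{id\}$. This is an anonymity group for every voting pair by Example \ref{simmalterno}$(b)$ (the constant {\sc spf} $F\equiv id$ has $G_1(F)=S_h\times\{id\}$), so nothing more is needed in this case.

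For the main case $2\le k\le \min\{h,n!\}$, the idea is simply to feed the group $V:=\times_{i=1}^{k}S_{H_i}$ into Corollary \ref{intransitivi anonimi}. Indeed, by the very definition given just before Remark \ref{stabilizzatori}, $V$ \emph{is} a complete intransitive subgroup of $S_h$ in $k$ blocks precisely because $\{H_1,\dots,H_k\}$ is a partition of $H$ into $k$ parts with $2\le k\le\min\{h,n!\}$; in particular $V$ trivially contains a complete intransitive subgroup of $S_h$ in $k$ blocks, namely itself. Corollary \ref{intransitivi anonimi} then gives at once that $V\times\{id\}$ is an anonymity group, which is the claim.

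The hard work has already been done upstream: Corollary \ref{intransitivi anonimi} rests on Theorem \ref{rappresentabili_Sh}, which via Lemmas \ref{Vqualsiasi} and \ref{stab dentro V} and the construction Theorem \ref{costruzione} produces a $U$-symmetric {\sc spf} with anonymity group exactly $V\times\{id\}$ by assigning the value $\sigma\ne id$ on the representatives $p^{(\varphi_j,id)}$ sitting in the same $S_h$-orbit as a profile $p$ with small stabilizer. So there is no genuine obstacle at the level of this corollary; the only point worth making explicit is that the arithmetic hypothesis $k\le\min\{h,n!\}$ is exactly what guarantees, through Remark \ref{stabilizzatori}, the existence of a profile $p\in\mathcal{P}$ with $\mathrm{Stab}_{S_h\times\{id\}}(p)=V\times\{id\}\le V\times\{id\}$, which is the hypothesis required by Theorem \ref{rappresentabili_Sh}. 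Hence the proof reduces to the two one-line invocations above.
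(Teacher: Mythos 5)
Your proof is correct and is precisely the argument the paper intends (the paper in fact omits the proof of this corollary as immediate): the case $k=1$ reduces to $S_h\times\{id\}$ via Example \ref{simmalterno}$(b)$, and for $2\le k\le\min\{h,n!\}$ the group $\times_{i=1}^k S_{H_i}$ is itself a complete intransitive subgroup in $k$ blocks, so Corollary \ref{intransitivi anonimi} applies directly. Nothing is missing.
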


\begin{corollary}\label{anon-hminore}
If $h \leq n!$, then every subgroup of $S_h \times \{id\}$ is an anonymity group.
\end{corollary}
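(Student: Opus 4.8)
The plan is to reduce the statement entirely to Theorem \ref{rappresentabili_Sh} together with Example \ref{simmalterno}\,$(b)$. Every subgroup of $S_h\times\{id\}$ has the form $V\times\{id\}$ for a unique $V\leq S_h$, so I distinguish two cases. If $V=S_h$, then $S_h\times\{id\}$ is already known to be an anonymity group by Example \ref{simmalterno}\,$(b)$, and nothing remains to be done. If $V<S_h$, then by Theorem \ref{rappresentabili_Sh} it suffices to exhibit a single preference profile $p\in\mathcal{P}$ whose stabilizer satisfies $\mathrm{Stab}_{S_h\times\{id\}}(p)\leq V\times\{id\}$; in fact I will arrange the stronger conclusion that this stabilizer is trivial, so that the containment holds for every proper $V$ at once.

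The only thing to check is that the hypothesis $h\leq n!$ produces a profile with pairwise distinct components. Since $\mathcal{P}=(S_n)^h$ and $|S_n|=n!\geq h$, I can choose $p=(p_i)_{i=1}^h\in\mathcal{P}$ with $p_i\neq p_j$ whenever $i\neq j$. For such a $p$, if $(\varphi,id)\in\mathrm{Stab}_{S_h\times\{id\}}(p)$, then, by the definition of the action, $p_{\varphi^{-1}(i)}=\big(p^{(\varphi,id)}\big)_i=p_i$ for every $i\in H$, and the distinctness of the components forces $\varphi^{-1}(i)=i$ for all $i$, that is $\varphi=id$. Hence $\mathrm{Stab}_{S_h\times\{id\}}(p)=\{(id,id)\}\leq V\times\{id\}$ for every $V\leq S_h$.

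Applying Theorem \ref{rappresentabili_Sh} to this $p$ then shows that $V\times\{id\}$ is an anonymity group for every proper subgroup $V<S_h$, and combining this with the case $V=S_h$ handled by Example \ref{simmalterno}\,$(b)$ gives the claim. I do not expect any genuine obstacle here: the substantive work — the explicit construction of a {\sc spf} with the prescribed anonymity group via Theorem \ref{costruzione} — is entirely contained in Theorem \ref{rappresentabili_Sh}, and the role of $h\leq n!$ is merely to guarantee that the trivial stabilizer is actually realised by some profile, which is exactly the point at which the earlier counterexample $(h,n)=(4,2)$ of Proposition \ref{quadrinomio} could not be avoided.
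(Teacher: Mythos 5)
Your proof is correct and follows essentially the same route as the paper: the paper invokes Corollary \ref{intransitivi anonimi} with the trivial group as the complete intransitive subgroup in $h$ blocks, which is exactly your observation that $h\leq n!$ yields a profile with pairwise distinct components and hence trivial stabilizer, fed into Theorem \ref{rappresentabili_Sh}. You merely inline the intermediate steps (Remark \ref{stabilizzatori} and Corollary \ref{intransitivi anonimi}) rather than citing them, and the separate treatment of $V=S_h$ via Example \ref{simmalterno}\,$(b)$ matches the paper's handling of that case inside Corollary \ref{intransitivi anonimi}.
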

\proof
Since $h \leq n!$, we have $\min\{h,n!\}=h$ and we can consider the complete intransitive group of $S_h$ in $h$ blocks, that is, the trivial group. Now every subgroup of $S_h$ contains the trivial group and thus, by Corollary  \ref{intransitivi anonimi}, it is an anonymity group.
\endproof

Note how the above corollary puts the pathology of the Klein subgroup completely under control. By Proposition \ref{quadrinomio}, we know that $K\times\{id\}\notin {\rm SPFAG}_4(2)$ and now, by Corollary \ref{anon-hminore} we see that $K\times\{id\}\in {\rm SPFAG}_4(3).$ 
But what is surely more important,  we are able to solve a main part of the anonymity problem.
\begin{corollary}\label{anon-n-varia}
${\rm SPFAG}_h=\{U\leq S_h\times\{id\}\}$ and ${\rm SPFAG}=\{V \times \{id\}: V\  \hbox{is a permutation group}\}.$
\end{corollary}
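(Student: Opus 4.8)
The plan is to deduce both equalities directly from Corollary \ref{anon-hminore}, which already settles the regime $h\le n!$; the only genuinely new remark is that, with $h$ fixed, one is always free to enlarge $n$ until $n!\ge h$.

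First I would prove ${\rm SPFAG}_h=\{U\le S_h\times\{id\}\}$. The inclusion $\subseteq$ is immediate from Definition \ref{def_gruppodi}: by definition an anonymity group relative to a voting pair $(h,n)$ is a subgroup of $S_h\times\{id\}$, and ${\rm SPFAG}_h=\bigcup_n{\rm SPFAG}_h(n)$, so every member of ${\rm SPFAG}_h$ is a subgroup of $S_h\times\{id\}$. For the reverse inclusion, fix $U\le S_h\times\{id\}$ and choose any $n\ge 2$ with $n!\ge h$; for instance $n=h$ works, since $h\ge 2$ gives $h!\ge h$ and $(h,h)$ is a legitimate voting pair. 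Then $h\le n!$, so Corollary \ref{anon-hminore} applies and tells us that every subgroup of $S_h\times\{id\}$ is an anonymity group with respect to $(h,n)$; in particular $U\in{\rm SPFAG}_h(n)\subseteq{\rm SPFAG}_h$.

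Next I would pass to ${\rm SPFAG}$. By definition ${\rm SPFAG}=\bigcup_{h\ge 2}{\rm SPFAG}_h$, so the first part yields ${\rm SPFAG}=\bigcup_{h\ge 2}\{U\le S_h\times\{id\}\}$. Each such $U$ has the form $V\times\{id\}$ with $V\le S_h$ a permutation group; conversely, any permutation group can be regarded, after labelling its ground set (enlarging it by a fixed point if it naturally acts on a single point), as a subgroup of $S_h$ for some $h\ge 2$. Hence the union coincides with $\{V\times\{id\}:V\text{ is a permutation group}\}$, which is the claimed identity.

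I do not anticipate a real obstacle: all the substance has already been packed into Corollary \ref{anon-hminore} (and beneath it into Corollary \ref{intransitivi anonimi}, Theorem \ref{rappresentabili_Sh} and Theorem \ref{costruzione}). The only points needing a word of care are the trivial observation that for each $h$ there is an admissible $n$ with $n!\ge h$, and the harmless bookkeeping in the final step identifying "subgroups of $S_h$ for varying $h\ge 2$" with "all permutation groups".
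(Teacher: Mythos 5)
Your proposal is correct and matches the paper's (implicit) argument: the corollary is stated there as an immediate consequence of Corollary \ref{anon-hminore}, obtained exactly as you do by choosing, for each fixed $h$, some $n$ with $n!\ge h$. The final identification of "subgroups of $S_h$ for varying $h$" with "all permutation groups" is the same harmless bookkeeping the paper takes for granted.
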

Of course we would be happy to know which are the groups in ${\rm SPFAG}_h(n)$, for every voting pair $(h,n)$ and which are the groups in ${\rm SPFAG}(n)$, for every $n$. But those questions remain open.

As an immediate consequence of the above corollary and of Proposition \ref{simmetria e anonim}, we also get some information about symmetry.
\begin{corollary}\label{sym-n-varia}
${\rm SPFSG}_h\supseteq \{U\leq S_h\times\{id\}\}$ and ${\rm SPFSG}\supseteq\{V \times \{id\}: V\  \hbox{is a permutation group}\}.$
\end{corollary}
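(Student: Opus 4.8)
The plan is to deduce both inclusions directly, with no new work, from Corollary~\ref{anon-n-varia} together with the inclusion ${\rm SPFAG}_h(n)\subseteq{\rm SPFSG}_h(n)$ recorded in Proposition~\ref{simmetria e anonim}. First I would unwind the definitions of the families in which $h$, and then $(h,n)$, are allowed to vary: by definition ${\rm SPFAG}_h=\bigcup_n{\rm SPFAG}_h(n)$ and ${\rm SPFSG}_h=\bigcup_n{\rm SPFSG}_h(n)$, and likewise ${\rm SPFAG}=\bigcup_{h,n}{\rm SPFAG}_h(n)$ and ${\rm SPFSG}=\bigcup_{h,n}{\rm SPFSG}_h(n)$. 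So an inclusion between these union-families follows at once from the same inclusion holding, for each $U$, at the level of a single suitably chosen voting pair.

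For the first inclusion, I would take an arbitrary $U\leq S_h\times\{id\}$. By Corollary~\ref{anon-n-varia} we have ${\rm SPFAG}_h=\{U\leq S_h\times\{id\}\}$, so $U\in{\rm SPFAG}_h$, i.e.\ $U\in{\rm SPFAG}_h(n)$ for some $n$ (concretely, any $n$ with $h\leq n!$ works, by Corollary~\ref{anon-hminore}). Proposition~\ref{simmetria e anonim} then gives $U\in{\rm SPFAG}_h(n)\subseteq{\rm SPFSG}_h(n)\subseteq{\rm SPFSG}_h$. Since $U$ was arbitrary, $\{U\leq S_h\times\{id\}\}\subseteq{\rm SPFSG}_h$, as claimed. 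The second inclusion is the identical argument with the voting pair allowed to vary in both coordinates: given a permutation group $V$, regard it as $V\leq S_h$ for the appropriate $h$; Corollary~\ref{anon-n-varia} yields $V\times\{id\}\in{\rm SPFAG}$, hence $V\times\{id\}\in{\rm SPFAG}_h(n)$ for some voting pair $(h,n)$, and Proposition~\ref{simmetria e anonim} places $V\times\{id\}\in{\rm SPFSG}_h(n)\subseteq{\rm SPFSG}$.

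I do not expect any genuine obstacle here: the entire substance has already been invested in Corollary~\ref{anon-n-varia} (which itself rests on Corollary~\ref{intransitivi anonimi} and Theorem~\ref{rappresentabili_Sh}) and in the elementary inclusion ${\rm SPFAG}_h(n)\subseteq{\rm SPFSG}_h(n)$; the statement at hand is merely what one obtains by taking unions over $n$, respectively over $(h,n)$. The only thing to be careful about is purely bookkeeping — matching the right union-family to the right quantifier range — and I would make that explicit in a single sentence rather than belabour it.
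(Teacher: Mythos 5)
Your proposal is correct and follows exactly the paper's route: the paper states this corollary as an immediate consequence of Corollary~\ref{anon-n-varia} combined with the inclusion ${\rm SPFAG}_h(n)\subseteq{\rm SPFSG}_h(n)$ from Proposition~\ref{simmetria e anonim}, which is precisely your argument. The only difference is that you spell out the union-over-voting-pairs bookkeeping that the paper leaves implicit.
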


\section{A critical example}\label{crt-ex}

The tricky results of the previous section could suggest that the hard part of the symmetry problem could be confined to the anonymity problem. Unfortunately things are more complicated than that. For instance, consider $V\leq S_h$ and $W\leq S_n$. We know, by Theorem \ref{neutralita}, that $\{id\}\times W$ is a neutrality group. Hence, if  $V\times\{id\}$ is an anonymity group, then one could guess that the group $V\times W$ is necessarily a symmetry group. Unfortunately this is not the case. A striking example is given by $V=\{id\}$ and by $W=S_2$ as illustrated in the next proposition.

\begin{proposition} \label{idpers2}
Let $(h,n)=(3,2)$ and $U=\{id\} \times S_2$. Then $U$ is a neutrality group which is not a symmetry group.
\end{proposition}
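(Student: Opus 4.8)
The first assertion is immediate: by Theorem \ref{neutralita} every subgroup of $\{id\}\times S_n$ is a neutrality group, so $U=\{id\}\times S_2$ is one for $(h,n)=(3,2)$. For the second assertion we must show that no $F\in\mathcal{F}_3(2)$ has $G(F)=U$; equivalently, every $F\in\mathcal{F}^U$ (that is, every $F$ with $U\leq G(F)$) in fact satisfies $G(F)\supsetneq U$. Since $\gcd(3,2!)=1$, Proposition \ref{lemma17} gives that $G=S_3\times S_2$ is regular, hence by Proposition \ref{Rsubclosed} so is every subgroup of $G$; thus Theorem \ref{costruzione} is available for $U$ and for every $V$ with $U\leq V\leq G$.

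The plan is to work with the identification $\mathcal{P}=(S_2)^3=\{0,1\}^3$ (writing $0$ for $id$ and $1$ for $(12)$), under which $p^{(id,(12))}=\bar p$ is the bitwise complement, so the $U$-symmetry condition reads $F(\bar p)=\overline{F(p)}$. The four $U$-orbits on $\mathcal{P}$ are the complementary pairs $\{000,111\},\{e_1,\overline{e_1}\},\{e_2,\overline{e_2}\},\{e_3,\overline{e_3}\}$ (with $e_i$ the $i$-th unit string), so a $U$-symmetric $F$ is determined by the values $y_0:=F(000)$ and $y_i:=F(e_i)$, $i\in\{1,2,3\}$, and takes the complementary value on the complementary string. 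Now $y_1,y_2,y_3\in\{0,1\}$, so by pigeonhole two of them coincide, say $y_i=y_j$ with $\{i,j\}\subseteq\{1,2,3\}$ and $k$ the remaining index. Put $\tau:=(ij)\in S_3$ and $g:=(\tau,id)\in G\setminus U$. I would then check that $g\in G(F)$, which, since $\psi_g=id$, amounts to $F(p^{(\tau,id)})=F(p)$ for all $p$, where $p^{(\tau,id)}$ is $p$ with its $i$-th and $j$-th coordinates swapped. Because $(\tau,id)$ commutes with the generator $(id,(12))$ of $U$, formula \eqref{azione} gives $(q^{(id,(12))})^{(\tau,id)}=q^{(\tau,(12))}=(q^{(\tau,id)})^{(id,(12))}=\overline{q^{(\tau,id)}}$, so it suffices to verify $F(q^{(\tau,id)})=F(q)$ on one representative $q$ of each $U$-orbit; on the representatives $000$ and $e_k$, which $\tau$ fixes, this is trivial, while on $e_i$ (mapped to $e_j$) it reduces precisely to $y_i=y_j$. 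Hence $g\in G(F)$ and $G(F)\supseteq\langle\tau\rangle\times S_2\supsetneq U$, so $U$ is not a symmetry group.

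The only delicate point is the verification $g\in G(F)$: one must keep in mind that $U$-symmetry is an \emph{anti}-equivariance ($F\circ\,\bar{\phantom{p}}=\overline{F\circ\phantom{p}}$) rather than plain invariance, which is exactly why the reduction to orbit representatives has to pass through the commutation identity above rather than being immediate; everything else is a bounded finite check. If one prefers to bypass the pointwise argument altogether, the same conclusion follows by counting in the spirit of Proposition \ref{legami}\,$(iii)$: for each transposition $\tau\in S_3$ the subgroup $V_\tau:=\langle\tau\rangle\times S_2$ is regular by Corollary \ref{reg-sub-Sn}, contains $U$ properly, and has $R(V_\tau)=3$, while any two (indeed all three) of the $V_\tau$ generate $G$, for which $R(G)=2$; by Theorem \ref{costruzione} and Proposition \ref{generato_intersezione}, inclusion--exclusion yields $\big|\bigcup_\tau\mathcal{F}^{V_\tau}\big|=3\cdot 2^{3}-3\cdot 2^{2}+2^{2}=16=2^{R(U)}=|\mathcal{F}^U|$, and since each $\mathcal{F}^{V_\tau}\subseteq\mathcal{F}^U$ this forces $\mathcal{F}^U=\bigcup_\tau\mathcal{F}^{V_\tau}$, so again every $F\in\mathcal{F}^U$ has symmetry group strictly larger than $U$.
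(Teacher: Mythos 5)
Your proof is correct, and your primary argument is genuinely different from the paper's. The paper proves the key identity $\mathcal{F}^U=\mathcal{F}^{X}\cup\mathcal{F}^{Y}\cup\mathcal{F}^{Z}$ (with $X,Y,Z$ the three subgroups $\langle\tau\rangle\times S_2$) purely by counting: $R_U=4$, $R_X=R_Y=R_Z=3$, $R_G=2$, pairwise and triple intersections all equal $\mathcal{F}^G$ by Proposition \ref{generato_intersezione}, and inclusion--exclusion gives $24-8=16=|\mathcal{F}^U|$ --- which is exactly your second, ``alternative'' route, down to the same arithmetic. Your main route instead exhibits, for each $F\in\mathcal{F}^U$, an explicit element of $G(F)\setminus U$ via pigeonhole on the values $y_1,y_2,y_3$ and a reduction to orbit representatives through the commutation of $(\tau,id)$ with $(id,(12))$; this is essentially the technique the paper itself uses for the Klein group in Proposition \ref{quadrinomio}, transplanted to this setting. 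What the constructive route buys is transparency (you see exactly which overgroup of $U$ swallows a given $F$, and the anti-equivariance subtlety you flag is handled correctly); what the counting route buys is brevity and independence from any case analysis on the $y_i$. Both verifications ($R_U=4$, the orbit structure, the pigeonhole step, and the inclusion--exclusion total of $16$) check out.
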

\proof The fact that $U$ is a neutrality group follows by Theorem \ref{neutralita}. In order to show that $U$ is not a symmetry group, we consider the following subgroups of $G$ properly containing $U$:
 $$X:= \langle (12) \rangle \times S_2, \qquad Y:= \langle (23) \rangle \times S_2, \qquad Z:=\langle (13) \rangle \times S_2.$$ 
 
 We show that 
 \begin{equation}\label{unione}
 \mathcal{F}^U=\mathcal{F}^{X} \cup \mathcal{F}^Y \cup \mathcal{F}^Z.
 \end{equation}
 
 This implies that  $U\notin G(\mathcal{F})$. Indeed, assume that \eqref{unione} holds and pick $F \in \mathcal{F}^U$. Then there exists $W\in\{X,Y,Z\}$ such that $F \in \mathcal{F}^W$ and thus $G(F) \geq W>U.$
 
By  $X,Y,Z \geq U$  we immediately have $\mathcal{F}^{X} \cup \mathcal{F}^Y \cup \mathcal{F}^Z \subseteq \mathcal{F}^U$. In order to show that equality holds true we show that
 \begin{equation}\label{unione2}
 |\mathcal{F}^{X} \cup \mathcal{F}^Y \cup \mathcal{F}^Z|=|\mathcal{F}^U|.
 \end{equation}

By Theorem \ref{costruzione} we have $|\mathcal{F}^U|=2^{R_U}$. Moreover it is easily checked that $R_U=4$, so that $|\mathcal{F}^U|=16$. 
Note now that  $$\langle X,Y \rangle=\langle X,Z \rangle=\langle Y,Z \rangle=G=S_3\times S_2,$$

By Proposition \ref{generato_intersezione}, we have
 $$\mathcal{F}^{X} \cap \mathcal{F}^Y=\mathcal{F}^{Y} \cap \mathcal{F}^Z= \mathcal{F}^{X} \cap \mathcal{F}^Z=  \mathcal{F}^{X} \cap \mathcal{F}^Y \cap \mathcal{F}^Z = \mathcal{F}^G.$$
Hence by inclusion-exclusion principle we obtain
$$|\mathcal{F}^{X} \cup \mathcal{F}^Y \cup \mathcal{F}^Z|=|\mathcal{F}^{X}| + |\mathcal{F}^Y|+| \mathcal{F}^Z| - 2|\mathcal{F}^G|.$$
Since it is easily checked that $R_X=R_Y=R_Z=3$ and $R_G=2$,  we deduce, by Theorem \ref{costruzione}, that 
$$|\mathcal{F}^{X} \cup \mathcal{F}^Y \cup \mathcal{F}^Z|=2^3+2^3+2^3-2\cdot 2^2=16,$$
 which proves \eqref{unione2}.
\endproof

With a little more amount of effort, and with similar arguments, one can show that also for $(h,n)=(4,2)$ the subgroup $\{id\} \times S_2 \leq G=S_4\times S_2$  is not a symmetry group. However it does not seem that the proofs for the cases $(h,n)\in \{(3,2), (4,2)\}$ could be generalized. We propose then an open question.

\begin{openproblem} For which voting pairs $(h,n)$ the subgroup $\{id\} \times S_2 $ is a symmetry group?
\end{openproblem}
Note that, for instance,  $\{id\} \times S_2$ is a symmetry group when $(h,n)=(2,2)$. Indeed, by Proposition \ref{lemma17}, $S_2\times S_2 \notin \mathcal{R}_2(2)$. On the other hand, by Corollary \ref{reg-sub-Sn}, $\{id\} \times S_2\in \mathcal{R}_2(2)$. Hence $\{id\} \times S_2 \in \overset{\textbf{.}}{\mathcal{R}}_2(2)$ and thus, by Proposition \ref{regolarita_rappresentabili}, $\{id\} \times S_2$ is a symmetry group.

We emphasize that a comparison of Proposition \ref{idpers2} with Corollary \ref{co-an-sim}  puts  in evidence the different nature and behaviour of anonymity and neutrality groups. Indeed, in general, we have $ {\rm SPFNG}_h(n)\not\subseteq {\rm SPFSG}_h(n)$ while we always have $ {\rm SPFAG}_h(n)\subseteq {\rm SPFSG}_h(n)$.

\section{Anonymity and representability by Boolean functions} \label{subsec:funzioni booleane}

In this section we observe a main link between the anonymity problem for social preference functions and the so-called representability by Boolean functions. We are confident that this new point of view could shed light on some difficult open problems about Boolean functions.

We recall some essential definitions.	Let $h,k \in \mathbb{N}$, with $h,k\geq 2$. A \textit{$k$-valued Boolean function} is a function $F : \{0,1\}^h \rightarrow \{0, \dots, k-1\}$. A $2$-valued Boolean function is called  also a \textit{Boolean function}. 
We denote by $\mathcal{B}_h(k)$ the set of $k$-valued Boolean function. 
Given $ \varphi \in S_h$ and $ x \in \{0,1\}^h$, we set  \[ x^{\varphi} := (x_{\varphi^{-1}(1)}, \dots, x_{\varphi^{-1}(h)})\in \{0,1\}^h,\]
establishing an action of $S_h$ on $\{0,1\}^h$.

Given $F \in \mathcal{B}_h(k)$, the set 
\[S(F):=\big \{ \varphi \in S_h | \,  F(x^{\varphi})=F(x), \ \forall x \in \{0,1\}^h \big \}\] 
 is a subgroup of $S_h$ called the \textit{invariance group}  of $F$.

A permutation group $V$, is called \textit{$k$-representable} if there exists $h\geq 2$ and $F \in \mathcal{B}_h(k)$ such that 
 $S(F)=V\leq S_h$; representable if it is $k$-representable for some $k\geq2$. The set of $k$-representable subgroups is denoted, in the literature, by ${\rm BGR}(k)$; the set of representable subgroups by ${\rm BGR}.$ 
Note that, if we set ${\rm BGR}_h(k):=\{S(F): F\in \mathcal{B}_h(k)\}$, then we have 
$${\rm BGR}(k)=\bigcup_{h}{\rm BGR}_h(k)$$ and $${\rm BGR}=\bigcup_{h, k}{\rm BGR}_h(k).$$
We add that also the consideration of $${\rm BGR}_h:=\bigcup_{k}{\rm BGR}_h(k)$$ could be of interest.

The representable groups  play an important role in computer science, universal algebra and graph theory and have been investigated since more than thirty years starting with the  paper by Clote and Kranakis \cite{CK}. That paper is seminal for two reasons. It introduces the problem for the first time giving some main tools to attack it and
contains a mistake, discovered by Kisielewicz  \cite{Kisie98}, which originated a famous intriguing question in the theory of representability:  
establishing the nature of the set ${\rm BGR}\setminus {\rm BGR}(2).$  The only known group in ${\rm BGR}\setminus {\rm BGR}(2)$ is the Klein group $K$ but it is unknown even if ${\rm BGR}\setminus {\rm BGR}(2)$ is finite or not. For the moment that set is a pure mystery, which seems to be resilient  also to some massive recent attacks (\cite{Gre10}, \cite{KisieGre14},\cite{KisieGre19}). Remarkably, in \cite{KisieGre19} the authors characterize the finite simple groups in ${\rm BGR}(2).$
We emphasize that those recent results seem to confirm that ${\rm BGR}\setminus {\rm BGR}(2)=\{K\}$.
For instance, Grech in \cite{Gre10} shows that the only regular permutation groups belonging to ${\rm BGR}\setminus {\rm BGR}(2)$ is $K.$
However, a proof for ${\rm BGR}\setminus {\rm BGR}(2)=\{K\}$ seems to be, at the moment, an unattainable goal.

Now we come back to the framework of the social preference functions observing a main link between anonymity groups of {\sc spf}s and representable groups. The heart of the matter is that a group $U \leq S_h \times \{id \} $  acts on  the elements of $\mathcal{P}=(S_n)^h$ just permuting the order of a string of objects selected from a set of $n!$ elements, without taking into account their nature of permutations. 

\begin{proposition}\label{legame-bool}
	Let $h,n,k \in \mathbb{N}$, with $h,n,k\geq 2$ and $k\leq n!$.
Then $$\{V\times\{id\}: V\in {\rm BGR}_h(k)\}\subseteq {\rm SPFAG}_h(n).$$
If $k=n=2$, then equality holds. In particular, $$\{V\times\{id\}: V\in {\rm BGR}_h(2)\}= {\rm SPFAG}_h(2)$$ and $$\{V\times\{id\}: V\in {\rm BGR}(2) \}= {\rm SPFAG}(2).$$ \end{proposition}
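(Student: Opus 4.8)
The plan is to transfer a Boolean function to a {\sc spf} by applying a fixed value map coordinatewise, exploiting that both actions in play are coordinate permutations: $S_h$ acts on $\{0,1\}^h$ by $x\mapsto x^\varphi$, while $S_h\times\{id\}$ acts on $\mathcal{P}=(S_n)^h$ by $(p^{(\varphi,id)})_i=p_{\varphi^{-1}(i)}$. First I would fix, once and for all, a surjection $\alpha\colon S_n\to\{0,1\}$ (possible since $|S_n|=n!\ge 2$) and an injection $\gamma\colon\{0,\dots,k-1\}\to S_n$ (possible since $k\le n!$). Given $V\in{\rm BGR}_h(k)$, choose $F\in\mathcal{B}_h(k)$ with $S(F)=V$, define $\beta\colon\mathcal{P}\to\{0,1\}^h$ by $\beta(p):=(\alpha(p_1),\dots,\alpha(p_h))$, and set $F':=\gamma\circ F\circ\beta\in\mathcal{F}_h(n)$. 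A one-line computation using $(p^{(\varphi,id)})_i=p_{\varphi^{-1}(i)}$ gives the equivariance $\beta(p^{(\varphi,id)})=\beta(p)^\varphi$, and hence $F'(p^{(\varphi,id)})=\gamma\bigl(F(\beta(p)^\varphi)\bigr)$ and $F'(p)=\gamma\bigl(F(\beta(p))\bigr)$ for all $p\in\mathcal{P}$ and $\varphi\in S_h$.

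Then, for $\varphi\in S_h$, since $\gamma$ is injective we have $(\varphi,id)\in G_1(F')$ if and only if $F(\beta(p)^\varphi)=F(\beta(p))$ for every $p\in\mathcal{P}$; since $\alpha$ is onto, $\beta$ is onto $\{0,1\}^h$, so this is equivalent to $F(x^\varphi)=F(x)$ for every $x\in\{0,1\}^h$, i.e.\ to $\varphi\in S(F)=V$. Recalling that $G_1(F')\le S_h\times\{id\}$ always holds, this shows $G_1(F')=V\times\{id\}$, and therefore $V\times\{id\}\in{\rm SPFAG}_h(n)$. This proves $\{V\times\{id\}:V\in{\rm BGR}_h(k)\}\subseteq{\rm SPFAG}_h(n)$.

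For the equality when $k=n=2$ I would run the correspondence backwards. Here $|S_2|=2$, so a bijection $\tau\colon S_2\to\{0,1\}$ induces a bijection $\hat\tau\colon(S_2)^h\to\{0,1\}^h$, $\hat\tau(p):=(\tau(p_1),\dots,\tau(p_h))$, which is again equivariant, i.e.\ $\hat\tau^{-1}(x^\varphi)=\hat\tau^{-1}(x)^{(\varphi,id)}$. Given $U=V\times\{id\}\in{\rm SPFAG}_h(2)$ realised as $U=G_1(F')$ with $F'\in\mathcal{F}_h(2)$, put $F:=\tau\circ F'\circ\hat\tau^{-1}\in\mathcal{B}_h(2)$. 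Since $\hat\tau$ is a bijection and $\tau$ is injective, the chain of equivalences of the previous paragraph reverses verbatim to give $S(F)=V$, so $V\in{\rm BGR}_h(2)$; hence ${\rm SPFAG}_h(2)\subseteq\{V\times\{id\}:V\in{\rm BGR}_h(2)\}$. Combined with the first inclusion applied to $k=n=2$ (legitimate since $2\le 2!$) this gives ${\rm SPFAG}_h(2)=\{V\times\{id\}:V\in{\rm BGR}_h(2)\}$; taking the union over $h$ and using ${\rm SPFAG}(2)=\bigcup_h{\rm SPFAG}_h(2)$ together with ${\rm BGR}(2)=\bigcup_h{\rm BGR}_h(2)$ yields ${\rm SPFAG}(2)=\{V\times\{id\}:V\in{\rm BGR}(2)\}$.

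The construction is essentially forced and the verifications are routine; the only points that genuinely need care are the equivariance of the coordinatewise maps $\beta$ and $\hat\tau$, and arranging $\beta$ to be \emph{surjective} onto $\{0,1\}^h$ so that passing through $F$ introduces no spurious symmetries — this is exactly why one insists that $\alpha$ be onto. It is also the reason the reverse inclusion is restricted to $n=2$: for $n>2$ a {\sc spf} may take more than two values, and there is no canonical way to compress $S_n$ back into $\{0,1\}$ without risking an enlargement of the symmetry group, so the connection with $2$-representability necessarily becomes one-directional.
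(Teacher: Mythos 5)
Your proof is correct, and for the forward inclusion it takes a genuinely different route from the paper. The paper embeds $\{0,1\}^h$ into $\mathcal{P}_h(n)$ (via $0=id$, $1=(12)$), notes that this cube is a union of $U$-orbits for $U=V\times\{id\}$, and then invokes the orbit-representative construction (Theorem \ref{costruzione}) to extend the given Boolean function $F$ to a $U$-symmetric {\sc spf} $F'$ on all of $\mathcal{P}$ agreeing with $F$ on the cube; the key point there is that a violation $F(\bar p^{(\varphi,id)})\neq F(\bar p)$ witnessed inside the cube persists for $F'$. You instead go the other way: you collapse all of $\mathcal{P}$ onto $\{0,1\}^h$ by a coordinatewise surjection $\beta$ and lift the output by an injection $\gamma$, getting the closed-form $F'=\gamma\circ F\circ\beta$. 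Your route avoids the regularity and orbit machinery entirely, and the two hypotheses $n!\geq 2$ (surjectivity of $\alpha$, hence no spurious symmetries) and $k\leq n!$ (injectivity of $\gamma$, hence no loss of distinctions) appear exactly where they are used, which is a pedagogical gain; the computation of $G_1(F')$ becomes a genuine chain of equivalences rather than a two-sided containment argument. What the paper's construction buys in exchange is that its $F'$ literally restricts to $F$ on the embedded cube, reinforcing the narrative that the {\sc spf} framework contains the Boolean one, and it showcases the representative-based construction method that the paper advertises as a tool for building Boolean functions. The converse direction for $k=n=2$ is essentially identical in both treatments (the paper simply identifies $\mathcal{F}_h(2)$ with $\mathcal{B}_h(2)$ outright, while you spell out the bijection $\tau$), and your passage to ${\rm SPFAG}(2)$ by taking unions over $h$ matches the paper's.
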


\begin{proof} To start with we describe in a convenient way the $k$-valued Boolean function environment, 
choosing the symbols $\{0, \dots, k-1\}$ as names for $k$ distinct permutations in $S_n$ including $0=id$ and $1=(12).$ This makes sense because, by assumption, $k \leq n!$. 
Suppose $V \in {\rm BGR}_h(k)$.
Then there exists $F \in \mathcal{B}_h(k)$, $F : \{0,1\}^h \rightarrow \{0, \dots, k-1\}$, such that $S(F)=V$. We now create $F' \in \mathcal{F}_h(n)$ such that $G_1(F')=V \times \{id\}$. Consider a system of representatives $(p^j)_{j=1}^r$ for the $r \in \mathbb{N}$ orbits of $U:=V \times \{id\}$  on $\mathcal{P}_h(n)$ such that  the first $s\leq r$ components $(p^j)_{j=1}^s$  are representatives for the $U$-orbits on $\{0,1\}^h=\{p\in \mathcal{P}_h(n): p_i\in \{id=0, (12)=1\}, \forall i\in[h]\}$.

By  Theorem \ref{teofon}, there exists $F' \in \mathcal{F}_h(n)^U$ such that $F'(p^j)=F(p^j)$ for  $j \leq s$  and $F'(p^j)=F(p^s)$ for $s<j\leq r$. By definition of $\mathcal{F}_h(n)^U$, we surely have
 $G_1(F') \geq U$. Moreover, it is immediately observed that $F'_{|_{\{0,1\}^h}}=F$ . Now, since $S(F)=V$, for each  $ \varphi \in S_h \setminus V$ there exists $\bar p \in \{0,1\}^h\subseteq \mathcal{P}_h(n)$ such that $F(\bar p ^{(\varphi,id)}) \neq F( \bar p )$. As a consequence, we also have $F'(\bar p ^{(\varphi,id)} ) \neq F'(\bar p)$ and thus $G_1(F') =U$. 
 
 Assume now that $k=n=2$. Then $\{0,1\}=S_2$. Let  $U=V \times \{id\}\in  {\rm SPFAG}_h(2)$, with $U=G_1(F')$ for some $F'\in \mathcal{F}_h(2).$  Then we also have $F' \in \mathcal{B}_h(2)$ and  $S(F')=V$.
 \end{proof}

By Proposition \ref{legame-bool}, we see that the problem of recognizing the anonymity groups for a {\sc spf}  in $2$ alternatives is the same that  the problem of recognizing the $2$-representable groups by a Boolean function. Thus, unfortunately, it is also of the same hardness. However, it is possible that the different strategy inspiring the study of {\sc spf}s could shed light also on $2$-representability.

Actually our paper contributes to a better knowledge of representability in at least two directions. First, methodologically, the action of $G=S_h\times S_2$ on $\mathcal{P}=S_2^h= \{0,1\}^h$ gives a solid mathematical structure to rely on, offering a method to build up Boolean functions as explained in Theorem \ref{costruzione}. 
Second, some theorems formulated for social preference functions can be directly rephrased for Boolean functions. For instance, by Theorem \ref{rappresentabili_Sh} and Proposition \ref{legame-bool}, we deduce the following proposition. 
\begin{proposition}\label{link}
	Let $V \leq S_h$ contain a subgroup of type $S_{H_1}\times S_{H_2}$, for $\{H_1, H_2\}$ a partition of $H=[h].$ 
	Then $V$ is $2$-representable.
\end{proposition}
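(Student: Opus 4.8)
The plan is to deduce Proposition~\ref{link} directly by combining Theorem~\ref{rappresentabili_Sh} with Proposition~\ref{legame-bool}, using the case $k=n=2$ of the latter. First I would recall what is being claimed: $V\leq S_h$ contains a subgroup of the form $S_{H_1}\times S_{H_2}$ where $\{H_1,H_2\}$ is a partition of $H=[h]$; since $h\geq 2$ this is a complete intransitive subgroup of $S_h$ in $k=2$ blocks, and trivially $2\leq 2\leq\min\{h,n!\}$ for any $n\geq 2$. So I may pick $n=2$ and view $V\times\{id\}$ as a candidate anonymity group for the voting pair $(h,2)$.

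Next I would produce the witnessing profile. By Remark~\ref{stabilizzatori}, because $S_{H_1}\times S_{H_2}$ is a complete intransitive subgroup of $S_h$ in $2$ blocks (with $2\leq\min\{h,n!\}$ once $n$ is chosen large enough — but for the Boolean conclusion we want $n=2$, so I must check $2\leq\min\{h,2\}$, i.e. $h\geq 2$, which holds), there exists $p\in\mathcal{P}_h(2)$ with $\mathrm{Stab}_{S_h\times\{id\}}(p)=(S_{H_1}\times S_{H_2})\times\{id\}\leq V\times\{id\}$. Concretely one can just take $p$ to be the profile assigning the order $1>2$ (i.e. $id\in S_2$) to every individual in $H_1$ and the order $2>1$ (i.e. $(12)$) to every individual in $H_2$; its stabilizer in $S_h\times\{id\}$ is exactly $(S_{H_1}\times S_{H_2})\times\{id\}$, which lies inside $V\times\{id\}$. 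If $V=S_h$ one can invoke Example~\ref{simmalterno}(b) directly, but in fact $V=S_h$ still contains $S_{H_1}\times S_{H_2}$, so I would just split on whether $V<S_h$ or $V=S_h$ and apply the appropriate statement; the cleanest is to note that when $V<S_h$ Theorem~\ref{rappresentabili_Sh} applies and when $V=S_h$ Example~\ref{simmalterno}(b) gives the conclusion, but actually Corollary~\ref{intransitivi anonimi} already handles both cases uniformly, so I would simply cite Corollary~\ref{intransitivi anonimi} to get $V\times\{id\}\in{\rm SPFAG}_h(2)$.

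Finally I would invoke the equality case of Proposition~\ref{legame-bool}: with $k=n=2$ we have $\{V\times\{id\}:V\in{\rm BGR}_h(2)\}={\rm SPFAG}_h(2)$, hence $V\times\{id\}\in{\rm SPFAG}_h(2)$ forces $V\in{\rm BGR}_h(2)$, and in particular $V$ is $2$-representable. That completes the argument.

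I do not expect any genuine obstacle here: the proposition is essentially a translation lemma, and every ingredient — the description of stabilizers under $S_h\times\{id\}$, the sufficient condition of Theorem~\ref{rappresentabili_Sh}/Corollary~\ref{intransitivi anonimi}, and the dictionary of Proposition~\ref{legame-bool} — is already in place. The only point requiring a moment's care is the bookkeeping that $k=2$ blocks is admissible for the voting pair $(h,2)$, i.e. that $2\leq\min\{h,n!\}=\min\{h,2\}$, which holds precisely because $h\geq 2$; and that a partition into two \emph{nonempty} parts is what is meant (so $H_1,H_2\neq\varnothing$), consistent with the definition of a complete intransitive subgroup in $k$ blocks. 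If $h=2$ this forces $H_1=H_2=$ singletons and $S_{H_1}\times S_{H_2}$ is trivial, so the statement then says every $V\leq S_2$ is $2$-representable, which is consistent with Corollary~\ref{anon-hminore} and Proposition~\ref{legame-bool}.
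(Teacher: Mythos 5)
Your proof is correct and follows exactly the route the paper intends: the paper derives Proposition \ref{link} precisely by combining Theorem \ref{rappresentabili_Sh} (via the stabilizer of the profile assigning $id$ on $H_1$ and $(12)$ on $H_2$, which is $(S_{H_1}\times S_{H_2})\times\{id\}$) with the equality case $k=n=2$ of Proposition \ref{legame-bool}. Your additional bookkeeping (the $V=S_h$ case and the admissibility of $k=2$ blocks for $(h,2)$) is accurate and fills in details the paper leaves implicit.
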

Note that the groups $S_{H_1}\times S_{H_2}$, for $\{H_1, H_2\}$ a partition of $H=[h]$, form the conjugacy class of maximal intransitive subgroups of $S_h$. When $|H_1|=|H_2|$,  we have $h$ even and $S_{H_1}\times S_{H_2}$ is not maximal in $S_h$ because it is contained in a copy of the maximal subgroup $S_{h/2}\wr S_2$. In other words, Proposition \ref{link} guarantees that the following groups are $2$-representable: $S_m\times S_{h-m}$ for $m\in [h-1]$ and $S_{h/2}\wr S_2$ when $h$ is even. Those facts are also a consequence of \cite[Theorem 3.1, Theorem 5.4]{Kisie98}.

\section{The symmetry problem and the orbit extension} \label{capitolo O(U)} 

In this Section we introduce the concept of orbit extension $O(U)$ of a subgroup  $U$ of  $G$, a special  overgroup of $U$ such that, when $U$ is a symmetry group,  equals $U$ itself. That fact is a main result of the paper and gives a very strong tool for deciding if a subgroup of $G$ is or not a symmetry group. Moreover, due to Proposition \ref{simmetria e anonim}, it gives also a tool for deciding if a subgroup of $G$ included in $S_h\times\{id\}$ is or not an  anonymity group.

We start defining a relation on the set of subgroups of $G$.
\begin{definition} \label{minore}{\rm Let  $U,V \leq G$. We write $U \leq_{\mathcal{P}} V$ if \begin{center}
$p^U \subseteq p^V$, \quad $\forall$ $p \in \mathcal{P}.$\
\end{center}
If  $U \leq_{\mathcal{P}} V$ and $V \leq_{\mathcal{P}} U$ we write $V \cong_{\mathcal{P}} U$ and we say that $U$ and $V$ are {\it orbit equivalent} on $\mathcal{P}.$ }
\end{definition}
Of course $U\leq V$ implies $U \leq_{\mathcal{P}} V$ but the converse does not hold. Observe that $\cong_{\mathcal{P}}$ is indeed an equivalence relation.
The role of the relation $ \leq_{\mathcal{P}}$ is apparent by the following result.

\begin{proposition} \label{orbite_contenute}
Let $U,V \leq G$ with $\langle U,V \rangle \in \mathcal{R}$. Then the following facts hold:
\begin{enumerate}
\item[$(i)$] If $U \leq_{\mathcal{P}} V$, then  $\mathcal{F}^V \subseteq \mathcal{F}^U$.
\item[$(ii)$] If $U \cong_{\mathcal{P}} V$, then $ \mathcal{F}^V = \mathcal{F}^U$.
\end{enumerate}
\end{proposition}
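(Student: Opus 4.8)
The plan is to establish (i) directly from the definitions and then obtain (ii) as a formal consequence by symmetry in $U$ and $V$. For (i), I would fix an arbitrary $F \in \mathcal{F}^V$ and verify the defining identity of $U$-symmetry: for every $p \in \mathcal{P}$ and every $g = (\varphi,\psi) \in U$, show $F(p^g) = \psi F(p)$. The crucial point is that the hypothesis $U \leq_{\mathcal{P}} V$ gives $p^g \in p^U \subseteq p^V$, so there exists $g' = (\varphi',\psi') \in V$ with $p^{g'} = p^g$. Applying the $V$-symmetry of $F$ to $g'$ then yields $F(p^g) = F(p^{g'}) = \psi' F(p)$, and the whole argument reduces to showing $\psi' = \psi$.

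This is where the regularity hypothesis is used. Since $g \in U$ and $g' \in V$, we have $\langle g, g' \rangle \leq \langle U, V \rangle$, and as $\langle U, V \rangle \in \mathcal{R}$ with $\mathcal{R}$ subgroup-closed (Proposition \ref{Rsubclosed}), we get $\langle g, g' \rangle \in \mathcal{R}$. Now $p^{g} = p^{g'}$ together with the regularity of $\langle g, g' \rangle$ forces $\psi = \psi'$ by Lemma \ref{banale-reg}. Hence $F(p^g) = \psi F(p)$; since $p$ and $g$ were arbitrary this shows $F \in \mathcal{F}^U$, proving $\mathcal{F}^V \subseteq \mathcal{F}^U$.

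For (ii), observe that $\langle U, V \rangle = \langle V, U \rangle$, so the hypothesis $\langle V, U \rangle \in \mathcal{R}$ also holds; applying part (i) once to the ordered pair $(U,V)$ gives $\mathcal{F}^V \subseteq \mathcal{F}^U$, and once to the ordered pair $(V,U)$ (legitimate since $U \cong_{\mathcal{P}} V$ means both $U \leq_{\mathcal{P}} V$ and $V \leq_{\mathcal{P}} U$) gives $\mathcal{F}^U \subseteq \mathcal{F}^V$, whence $\mathcal{F}^U = \mathcal{F}^V$.

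I do not anticipate a genuine obstacle: the proof is short, and the only subtlety is to pass to the subgroup $\langle g, g' \rangle$ and invoke subgroup-closedness of $\mathcal{R}$ before applying Lemma \ref{banale-reg}, rather than trying to use $\langle U, V \rangle$ directly — this step, however, is immediate.
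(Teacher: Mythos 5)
Your proof is correct, and it takes a genuinely more direct route than the paper's. The paper proves $(i)$ by fixing a system of representatives for the $V$-orbits, extending it to one for the $U$-orbits (possible since $U \leq_{\mathcal{P}} V$ forces each $U$-orbit to sit inside a $V$-orbit), invoking Theorem \ref{costruzione} to produce the unique $\tilde F \in \mathcal{F}^U$ agreeing with $F$ on the representatives, and then running a two-case argument ($p \in p^{jU}$ versus $p \in p^{jV}\setminus p^{jU}$) to show $F=\tilde F$; both cases ultimately rest on Lemma \ref{banale-reg}. You instead verify the defining identity of $U$-symmetry pointwise: given $g\in U$ you pick $g'\in V$ with $p^{g'}=p^g$, apply the $V$-symmetry of $F$, and use subgroup-closedness of $\mathcal{R}$ (Proposition \ref{Rsubclosed}) together with Lemma \ref{banale-reg} on $\langle g,g'\rangle$ to conclude $\psi=\psi'$. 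The essential ingredient — regularity of $\langle U,V\rangle$ feeding into Lemma \ref{banale-reg} — is identical in both arguments, but your version bypasses Theorem \ref{costruzione} and the bookkeeping with representatives entirely, which makes it shorter and arguably cleaner; what the paper's construction buys is the explicit identification of $R_U \geq R_V$ and of how the orbit systems nest, facts it reuses elsewhere. Part $(ii)$ is handled identically in both.
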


\proof $(i)$ Let $\mathbf{p}_V=(p^j)_{j=1}^{R_V} \in \mathfrak{S}(V)$ be a system of representatives for the $V$-orbits on  $\mathcal{P}$. By $U \leq_{\mathcal{P}} V$, we have $p^{jU} \subseteq p^{jV}$ for all $j \in [R_V]$. Thus for $i,j \in [R_V]$ distinct, we have $p^{jU} \cap p^{iU} \subseteq p^{jV} \cap p^{iV} = \varnothing$. Hence the components of  $\mathbf{p}_V$ are representatives of a subset of $U$-orbits. In particular, we have $R_U \geq R_V$ and the ordered list $\mathbf{p}_V$ can be completed 
giving rise to a system $\mathbf{p}_U=(p^j)_{j=1}^{R_U} \in \mathfrak{S}(U)$ of representatives for the $U$-orbits on  $\mathcal{P}$.
Let $F \in \mathcal{F}^V$ and let $F(p^j)=:q_j \in S_n$ for $j \in [R_U]$. By Theorem \ref{costruzione}, there exists a unique $\tilde{F} \in \mathcal{F}^U$ such that 
\begin{center}
$\tilde{F}(p^{j})=q_j$, \quad $\forall$ $j \in R_U.$
\end{center}

We claim that  $F= \tilde{F}$. Let $p \in \mathcal{P}$. Then there exists a unique $j \in [R_V]$ such that  $p \in p^{jV}$. Let $(\varphi, \psi) \in V$  be such that  $p=p^{j(\varphi,\psi)}$. By the $V$-symmetry of $F$ we get
\begin{equation}
F(p)=F(p^{j(\varphi,\psi)})=\psi q_j. \label{equ}
\end{equation} 
Recalling that $p^{jU}\subseteq p^{jV}$,  two possibilities arise: $p \in p^{jU}$ or $p \in p^{jV}\setminus p^{jU}$. We examine them separately.

Let first $p \in p^{jU}$. Then there exists $(\bar \varphi,\bar \psi) \in U$ such that $p=p^{j(\bar \varphi,\bar \psi)}$. Since $ \tilde{F} \in \mathcal{F}^U$, we have 

\begin{equation}\label{equ2}
\tilde{F}(p)= \tilde{F}(p^{j(\bar \varphi,\bar \psi)})= \bar \psi \tilde{F}(p^j)= \bar \psi q_j.
\end{equation}
On the other hand we have $p^{j(\varphi,\psi)}=p=p^{j(\bar \varphi,\bar \psi)}$ and by the regularity of $\langle U,V \rangle$ and by Lemma \ref{banale-reg},  we obtain $ \psi=\bar \psi$. Hence, by  \eqref{equ} and \eqref{equ2}, we get  $F(p)= \psi q_j=\bar \psi q_j =\tilde F(p)$. 

Let next $p \in p^{jV}\setminus p^{jU}$. Then necessarily $R_U>R_V$ and there exist $k \in \lbrace R_V+1, \dots , R_U \rbrace$ and $(\bar \varphi, \bar \psi) \in U$ such that $p=p^{k(\bar \varphi, \bar \psi)}$. Moreover, since $U \leq_{\mathcal{P}} V$, we have $p^k \in p^U \subseteq p^V=p^{jV}$. Then there exists $(\hat \varphi, \hat \psi) \in V$ such that  $p^k=p^{j(\hat \varphi, \hat \psi)}$. 
By $F \in \mathcal{F}^V$ and $ \tilde{F} \in \mathcal{F}^U$, we then get 
\begin{equation}\label{equ3}
\tilde F(p)= \tilde F(p^{k(\bar \varphi, \bar \psi)})=\bar \psi \tilde F(p^{k})= \bar \psi F(p^k)=\bar \psi F(p^{j(\hat \varphi, \hat \psi)})=\bar \psi \hat \psi q_j.
\end{equation}
On the other hand, we have $p^{j(\varphi,\psi)}=p=p^{k(\bar \varphi, \bar \psi)}=p^{j(\bar \varphi \hat \varphi, \bar \psi \hat \psi)}$ and,  by the regularity of $\langle U,V \rangle$  and by Lemma \ref{banale-reg}, we obtain $\psi=\bar \psi \hat \psi$. Thus, by \eqref{equ} and \eqref{equ3}  we get $F(p)=\psi q_j = \bar \psi \hat \psi q_j= \tilde F(p)$. 

Thus we have proved that  $F=\tilde F$ and hence $\mathcal{F}^V \subseteq \mathcal{F}^U$.

 $(ii)$ By $U \leq_{\mathcal{P}} V$ and $V \leq_{\mathcal{P}} U$, using  $(i)$ we have $\mathcal{F}^V \subseteq \mathcal{F}^U$ and $\mathcal{F}^U \subseteq \mathcal{F}^V$. Hence $\mathcal{F}^V = \mathcal{F}^U$.
\endproof

\begin{definition}\label{def_odivu}
{\rm Let $U \in \mathcal{R}$. We define the set of subgroups of $G$
\[
\mathcal{A}(U):=\big \lbrace V \in \mathcal{R} : V  \geq U , \, \, V \leq_{\mathcal{P}} U \big \rbrace
\]
and the subgroup of $G$
\begin{equation*} 
O(U):=\langle \mathcal{A}(U) \rangle.
\end{equation*} }
\end{definition}

We call $O(U)$ the {\it orbit extension} of $U$.  Note that  $U \in \mathcal{A}(U)$ so that 
\begin{equation} \label{odivu-fact}
U\leq O(U)\leq G.
\end{equation}

It is easily seen that if $g\in G$, then $O(U^g)=O(U)^g$. For the sake of brevity we omit that routine proof. In particular if $U$ is normal in $G$, then also $O(U)$ is normal in $G.$ 

\begin{proposition} \label{regolar O(U)} 
	Let $U\in \mathcal{R}$. Then the following facts hold:
	\begin{enumerate}
		\item[$(i)$] $O(U) \in \mathcal{R}$.
		\item[$(ii)$] $\mathcal{F}^U= \mathcal{F}^{O(U)}$.
		\item[$(iii)$] $\bigcap_{F \in \mathcal{F}^{U}}G(F)=O(U).$
	\end{enumerate} 
\end{proposition}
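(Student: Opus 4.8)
The plan is to first isolate the fact that every subgroup in $\mathcal{A}(U)$ is contained in the symmetry group of every $U$-symmetric {\sc spf}, and then derive $(i)$, $(ii)$, $(iii)$ from it. First I would note that $\mathcal{A}(U)$ has a cleaner description: if $V\geq U$ then $p^U\subseteq p^V$ for all $p\in\mathcal{P}$, so the extra condition $V\leq_{\mathcal{P}} U$ forces $p^U=p^V$, i.e. $V\cong_{\mathcal{P}} U$; thus $\mathcal{A}(U)$ is exactly the set of regular overgroups of $U$ orbit equivalent to $U$. For any such $V$ we have $\langle U,V\rangle=V\in\mathcal{R}$, so Proposition \ref{orbite_contenute}$(ii)$ applies to the pair $(U,V)$ and gives $\mathcal{F}^V=\mathcal{F}^U$. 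Hence every $F\in\mathcal{F}^U$ lies in $\mathcal{F}^V$, i.e. $V\leq G(F)$, for all $V\in\mathcal{A}(U)$; taking the join, $O(U)\leq G(F)$ for every $F\in\mathcal{F}^U$. Since $U\in\mathcal{R}$, Theorem \ref{teofon} guarantees $\mathcal{F}^U\neq\varnothing$.

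Now $(i)$ and $(ii)$ follow quickly. Pick $F\in\mathcal{F}^U$. Then $O(U)\leq G(F)$, and since $F\in\mathcal{F}^{G(F)}\neq\varnothing$, Theorem \ref{teofon} gives $G(F)\in\mathcal{R}$; as $\mathcal{R}$ is subgroup-closed (Proposition \ref{Rsubclosed}), $O(U)\in\mathcal{R}$, which is $(i)$. For $(ii)$, the containment $U\leq O(U)$ gives $\mathcal{F}^{O(U)}\subseteq\mathcal{F}^U$; conversely, for $F\in\mathcal{F}^U$ we have $O(U)\leq G(F)$, so $F\in\mathcal{F}^{G(F)}\subseteq\mathcal{F}^{O(U)}$, proving $\mathcal{F}^U\subseteq\mathcal{F}^{O(U)}$ and hence equality.

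For $(iii)$, the key lemma already yields $O(U)\leq\bigcap_{F\in\mathcal{F}^U}G(F)$. For the reverse inclusion, take $g\in\bigcap_{F\in\mathcal{F}^U}G(F)$ and put $W:=\langle U,g\rangle$. Every $F\in\mathcal{F}^U$ has $g\in G(F)$, hence $\langle g\rangle\leq G(F)$ and so $F\in\mathcal{F}^U\cap\mathcal{F}^{\langle g\rangle}=\mathcal{F}^W$ by Proposition \ref{generato_intersezione}; combined with $\mathcal{F}^W\subseteq\mathcal{F}^U$ (from $W\geq U$) this gives $\mathcal{F}^W=\mathcal{F}^U$, so in particular $\mathcal{F}^W\neq\varnothing$ and $W\in\mathcal{R}$ by Theorem \ref{teofon}. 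I would then invoke the cardinality formula of Theorem \ref{costruzione}: $n!^{R_W}=|\mathcal{F}^W|=|\mathcal{F}^U|=n!^{R_U}$ forces $R_W=R_U$; but $W\geq U$ makes each $W$-orbit a union of $U$-orbits, so equality of the numbers of orbits forces every $W$-orbit to be a single $U$-orbit, i.e. $p^W=p^U$ for all $p\in\mathcal{P}$. Thus $W\cong_{\mathcal{P}} U$, so $W\in\mathcal{A}(U)$ and $g\in W\leq O(U)$, which gives the reverse inclusion and completes the proof.

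The whole argument is short modulo the earlier machinery; the only genuinely delicate point is the last step of $(iii)$, namely turning the equality $\mathcal{F}^W=\mathcal{F}^U$ into the orbit equality $W\cong_{\mathcal{P}} U$, for which the counting identity $|\mathcal{F}^V|=n!^{R(V)}$ of Theorem \ref{costruzione} is exactly the lever needed. Everything else is bookkeeping with the monotonicity $\mathcal{F}^{U_2}\subseteq\mathcal{F}^{U_1}$ for $U_1\leq U_2$, with Proposition \ref{generato_intersezione}, and with the subgroup-closedness of $\mathcal{R}$.
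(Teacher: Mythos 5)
Your proposal is correct, and for the crucial direction of $(iii)$ it takes a genuinely different route from the paper. Parts $(i)$ and $(ii)$, as well as the inclusion $O(U)\leq\bigcap_{F\in\mathcal{F}^U}G(F)$, are in substance the same argument as the paper's: both rest on the observation that every $V\in\mathcal{A}(U)$ is a regular overgroup of $U$ orbit equivalent to $U$, so that Proposition \ref{orbite_contenute} gives $\mathcal{F}^U\subseteq\mathcal{F}^V$, whence $\mathcal{F}^{O(U)}=\bigcap_{V\in\mathcal{A}(U)}\mathcal{F}^V\supseteq\mathcal{F}^U\neq\varnothing$ and regularity of $O(U)$ follows from Theorem \ref{teofon} (or, in your phrasing, from subgroup-closedness of $\mathcal{R}$ inside $G(F)$). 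Where you diverge is the reverse inclusion in $(iii)$. The paper argues by contradiction: given $g\in S\setminus O(U)$ it produces a profile $\bar p$ with $\bar p^{\,g}\notin\bar p^{\,U}$ and then uses Theorem \ref{costruzione} to build two explicit $U$-symmetric {\sc spf}s whose prescribed values on the representatives $\bar p^{\,g}$ and $\bar p$ cannot both be compatible with $g\in G(F)$. You instead set $W=\langle U,g\rangle$, deduce $\mathcal{F}^W=\mathcal{F}^U$ from Proposition \ref{generato_intersezione} and monotonicity, and then use the cardinality formula $|\mathcal{F}^V|=n!^{R(V)}$ of Theorem \ref{costruzione} to force $R_W=R_U$; since $W\geq U$ makes every $W$-orbit a union of $U$-orbits, equality of orbit counts forces $p^W=p^U$ for all $p$, so $W\in\mathcal{A}(U)$ and $g\in O(U)$ directly. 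Both arguments ultimately lean on Theorem \ref{costruzione}, but yours uses only its counting consequence and yields the slightly sharper structural fact that $\langle U,g\rangle\in\mathcal{A}(U)$ for every $g$ in the intersection, avoiding any explicit construction; the paper's witness construction is more hands-on and mirrors the technique it reuses elsewhere (e.g.\ in Theorem \ref{rappresentabili_Sh}). Your argument is complete and correct as stated.
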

\begin{proof} $(i)$-$(ii)$
	By Proposition \ref{generato_intersezione}  we have
	\begin{equation}\label{aaa}
	\mathcal{F}^{O(U)}= \mathcal{F}^{\langle \mathcal{A}(U) \rangle} = \bigcap_{V \in \mathcal{A}(U)}\mathcal{F}^V.
	\end{equation}
	Let $V \in \mathcal{A}(U)$. Then we have $V \leq_{\mathcal{P}} U$ and $U \leq V \in \mathcal{R}$. Thus, by Proposition \ref{orbite_contenute} we deduce $\mathcal{F}^U \subseteq \mathcal{F}^V$. Hence, by \eqref{aaa}, by the regularity of $U$ and  by Theorem  \ref{teofon},  we get
	\[
	\mathcal{F}^{O(U)}= \bigcap_{V \in \mathcal{A}(U)}\mathcal{F}^V \supseteq \mathcal{F}^U \neq \varnothing.
	\]
	In particular, $\mathcal{F}^{O(U)} \neq \varnothing$ so that, by Theorem \ref{teofon}, $O(U)$  is regular. Moreover, by $U\leq O(U)$ we also have $\mathcal{F}^{O(U)}\subseteq \mathcal{F}^U$ and therefore $\mathcal{F}^U=\mathcal{F}^{O(U)}$.
	\smallskip
	
	$(iii)$  Define $$S:=\underset{F \in \mathcal{F}^{U}}{\bigcap} G(F)$$ and let  $F \in \mathcal{F}^U$. By  $(ii)$, we have $F \in \mathcal{F}^{O(U)}$ and thus $G(F) \geq O(U)$. It follows that  $S \geq O(U)$. We show that equality holds. Suppose, by contradiction, that 
	$S > O(U)$ and let  $g=(\varphi, \psi) \in S \setminus O(U)$. We observe that $\langle g,U \rangle\in \mathcal{R}$. Indeed by  $U \leq S$ and $g \in S$ it follows that $\langle g,U \rangle \leq S$. Now $S$ is surely regular because included in $G(F)\in \mathcal{R}$. Thus also its subgroup $\langle g,U \rangle$ is regular. If we suppose that $p^g \in p^U$ for all  $p\in \mathcal{P}$, then $\langle g,U \rangle \leq_{\mathcal{P}}U$ and so $\langle g,U \rangle \in \mathcal{A}(U)$. Hence $g \in O(U)$, against the assumption. Thus there exists $\bar p \in \mathcal{P}$ such that  $\bar p^{g} \notin \bar p^U$.
	Since $\bar p$ and $\bar p^{g}$ belong to two different $U$-orbits,  they can be chosen as representative for $\bar p^U$ and $(\bar p^{g})^{U}$. 
	Let then $p^1:=\bar p^{g}$, $p^2:=\bar p$ and complete this list up to  a system $(p^j)_{j=1}^{R_U} \in \mathfrak{S}(U)$ of representatives of the $U$-orbits on $\mathcal{P}$. By Theorem \ref{costruzione}, there exists an unique $F \in \mathcal{F}^U$ such that $F(p^j)=id$ for all $j \in [R_U]$. Since $g \in S$, we have that $g \in G(F)$ and hence
	\begin{equation}\label{prima}
	id=F(p^1)=F(\bar p^{g})= \psi F(\bar p)=\psi F(p^2)=\psi.
	\end{equation}
	Fix now $\sigma\in S_n\setminus\{id\}.$ By Theorem \ref{costruzione}, there exists an unique $\hat F\in \mathcal{F}^U$ such that $\hat F(p^1)= \sigma$ and
	$\hat F(p^{j})=id$ for all $j  \in [R_U]\setminus\{ 1\}$.
	Since $g \in G(\hat{F})$, using \eqref{prima}, we then have
	\[
	\sigma=\hat F(p^1)=\hat F(\bar p^{g})=  \hat F(\bar p)=\hat F(p^2)=id,
	\]
	a contradiction. 
\end{proof}

We are finally ready for the promised necessary condition for symmetry and anonymity.

\begin{corollary} \label{condizione_necess}
If $U$ is a symmetry group, then  $O(U)=U$.
\end{corollary}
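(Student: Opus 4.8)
The plan is to deduce this immediately from Proposition \ref{regolar O(U)}\,$(iii)$, which already does all the heavy lifting. First I would note that since $U$ is a symmetry group, by Proposition \ref{regolarita_rappresentabili}\,$(ii)$ we have $U\in{\rm SPFSG}_h(n)\subseteq\mathcal{R}_h(n)$, so $U$ is regular and hence the orbit extension $O(U)$ is well defined. By definition of symmetry group, there exists $F\in\mathcal{F}$ with $G(F)=U$; and since every {\sc spf} satisfies $F\in\mathcal{F}^{G(F)}$ (directly from Definitions \ref{u-sym} and \ref{def_gruppidisimmetria}), we have $F\in\mathcal{F}^U$.

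Next I would invoke Proposition \ref{regolar O(U)}\,$(iii)$, which gives
\[
O(U)=\bigcap_{F'\in\mathcal{F}^U}G(F').
\]
Since the particular $F$ above lies in $\mathcal{F}^U$ and satisfies $G(F)=U$, the intersection on the right-hand side is contained in $G(F)=U$, so $O(U)\leq U$. On the other hand, \eqref{odivu-fact} always gives $U\leq O(U)$. Combining the two inclusions yields $O(U)=U$, which is the claim.

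There is essentially no obstacle here: the statement is a one-line corollary once Proposition \ref{regolar O(U)}\,$(iii)$ is in hand, the genuine work having been carried out in establishing that proposition (in particular the two applications of Theorem \ref{costruzione} used to separate orbits). The only points worth stating explicitly in the write-up are the regularity of $U$ (so that $O(U)$ is defined) and the observation $F\in\mathcal{F}^U$, after which the inclusions $O(U)\leq U\leq O(U)$ close the argument.
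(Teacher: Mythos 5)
Your proposal is correct and follows exactly the paper's own argument: regularity of $U$ via Proposition \ref{regolarita_rappresentabili}\,$(ii)$, the identity $O(U)=\bigcap_{F'\in\mathcal{F}^{U}}G(F')$ from Proposition \ref{regolar O(U)}\,$(iii)$ applied to a witness $F$ with $G(F)=U$ to get $O(U)\leq U$, and \eqref{odivu-fact} for the reverse inclusion. Nothing is missing.
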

\proof $ $
By assumption there exists $F \in \mathcal{F}^U$ such that $G(F)=U$. By Proposition \ref{regolarita_rappresentabili}\,$(ii)$, we know that $U\in \mathcal{R}$.
Then, by Proposition \ref{regolar O(U)}\,$(iii)$, we have  $O(U) \leq U$. By \eqref{odivu-fact}, we also have $U \leq O(U)$ and hence $O(U)=U$.
\endproof
\begin{corollary} \label{necessh}
If $U$ is an anonymity group, then $O(U)=U$.
\end{corollary}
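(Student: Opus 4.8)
The plan is to derive this immediately from the material already in place, since both the relevant bridge results appear above. The chain is: an anonymity group is a symmetry group, and a symmetry group $U$ satisfies $O(U)=U$.

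First I would invoke Corollary \ref{co-an-sim}, which states that every anonymity group is also a symmetry group; thus, if $U$ is an anonymity group, then $U \in {\rm SPFSG}_h(n)$. (Alternatively one could cite Proposition \ref{simmetria e anonim} directly, since an anonymity group has the form $V \times \{id\}$ with $V \le S_h$, and that proposition gives the equivalence of being a symmetry group and being an anonymity group for subgroups of $S_h \times \{id\}$.) Then I would apply Corollary \ref{condizione_necess}, which asserts that any symmetry group $U$ satisfies $O(U)=U$. Combining the two yields $O(U)=U$, as desired.

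There is essentially no obstacle here: the content is entirely carried by Corollaries \ref{co-an-sim} and \ref{condizione_necess}, which in turn rest on Propositions \ref{regolarita_rappresentabili} and \ref{regolar O(U)}. The only thing to be careful about is that Corollary \ref{condizione_necess} presupposes $U \in \mathcal{R}$ before speaking of $O(U)$; but this is automatic, since an anonymity group is a symmetry group and every symmetry group is regular by Proposition \ref{regolarita_rappresentabili}\,$(ii)$, so $O(U)$ is well defined. Hence the proof is a one-line deduction: \emph{By Corollary \ref{co-an-sim}, $U$ is a symmetry group, and so $O(U)=U$ by Corollary \ref{condizione_necess}.}
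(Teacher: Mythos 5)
Your proof is correct and matches the paper's own argument exactly: the paper also deduces that an anonymity group $U \leq S_h \times \{id\}$ is a symmetry group (via Proposition \ref{simmetria e anonim}, of which Corollary \ref{co-an-sim} is the restatement) and then applies Corollary \ref{condizione_necess}. Your additional remark that regularity of $U$ is automatic, so that $O(U)$ is well defined, is a sound observation that the paper leaves implicit.
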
 
\proof If 
$U$ is an anonymity group, then $U\leq S_h\times \{id\}$ and by Proposition \ref{simmetria e anonim}, $U$ is also a symmetry group. Thus, by Corollary \ref{condizione_necess}  we get $O(U)=U$.
\endproof

\begin{corollary} \label{O-repres}
	If a permutation group $V$ is $2$-representable, then $O(V\times\{id\})=V\times\{id\}$.
\end{corollary}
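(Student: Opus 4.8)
The plan is to obtain this as a formal corollary by chaining Proposition \ref{legame-bool} with Corollary \ref{necessh}. First I would unwind the definition of $2$-representability: saying that the permutation group $V$ is $2$-representable means precisely that $V\in {\rm BGR}_h(2)$ for some integer $h\ge 2$, i.e.\ there is a Boolean function $F\colon\{0,1\}^h\to\{0,1\}$ with invariance group $S(F)=V\le S_h$. Fix such an $h$ and work with the voting pair $(h,2)$, for which $\mathcal{P}_h(2)$ coincides, as a set of functions on $\{0,1\}^h$, with the domain $\{0,1\}^h$ of $F$.

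Next I would invoke the equality case of Proposition \ref{legame-bool} with $k=n=2$, namely $\{V\times\{id\}: V\in {\rm BGR}_h(2)\}= {\rm SPFAG}_h(2)$. Since $V\in {\rm BGR}_h(2)$, this yields $V\times\{id\}\in {\rm SPFAG}_h(2)$; that is, $U:=V\times\{id\}$ is an anonymity group for the voting pair $(h,2)$. Before the final step I would record that $U\le S_h\times\{id\}$ is regular (Corollary \ref{reg-sub-Sn}, or Proposition \ref{Rsubclosed}), so that the orbit extension $O(U)$, a construction inside $G=S_h\times S_2$ acting on $\mathcal{P}_h(2)$, is well defined.

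Finally I would apply Corollary \ref{necessh}: since $U=V\times\{id\}$ is an anonymity group, $O(U)=U$, which is exactly the claimed identity $O(V\times\{id\})=V\times\{id\}$.

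I do not expect a genuine obstacle: the content of the statement is entirely carried by Proposition \ref{legame-bool} and Corollary \ref{necessh}, and the present corollary merely composes them. The only point deserving a word of care is the bookkeeping about the ambient voting pair — one must check that the number $h$ furnished by $2$-representability and the value $n=2$ are the ones used throughout the application of the orbit-extension machinery — but this is automatic, since $\mathcal{B}_h(2)$-functions and $\mathcal{P}_h(2)$-profiles are the same objects on $\{0,1\}^h$.
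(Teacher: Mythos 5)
Your proposal is correct and follows exactly the paper's own (one-line) proof: apply Proposition \ref{legame-bool} to get $V\times\{id\}\in {\rm SPFAG}_h(2)$ and then Corollary \ref{necessh} to conclude $O(V\times\{id\})=V\times\{id\}$. The only cosmetic remark is that you only need the inclusion $\{V\times\{id\}: V\in {\rm BGR}_h(2)\}\subseteq {\rm SPFAG}_h(2)$ rather than the full equality, but this changes nothing.
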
 
\proof It immediately follows from Proposition \ref{legame-bool} and Corollary \ref{necessh}.
\endproof
At the best of our knowledge the necessary condition for $2$-representability expressed by Corollary \ref{O-repres}, has never been explicitly noticed before in the literature. Some ideas of  formalization similar  to  our $O(U)$ can be found in \cite{HM} and in \cite{HM21},  in the context of $k$-valued Boolean function and treating the problem in terms of a Galois connection. Moreover, some similar formalization  appears in \cite{DS} in the context of relation groups.
Of course, the necessity of taking into account the possibility for different subgroups of $S_h$ to produce the same orbits on $\{0,1\}^h$, and the consequences of that on representability issues, are surely stressed since \cite{CK}. Further main considerations appear in \cite{Kisie98}. In particular, \cite[Theorem 2.2]{Kisie98} guarantees that $V\in{\rm BGR}_h$ if and only if $V$ is maximal among the subgroup of $S_h$ having the same number of orbits on $\{0,1\}^h$.

With no doubt, the Definition \ref{def_odivu} of $O(U)$ can appear kinky and one could ask for a simpler one. Unfortunately, it seems that all the details in it are compulsory  to reach the goals given by Corollaries \ref{condizione_necess} and \ref{necessh}. A little consolation is given by considering, for $U\in \mathcal{R}$, the more manageable subset of $G$
\begin{equation*}
W(U):=\{ g \in G: p^g \in p^U, \quad \forall p \in \mathcal{P} \}.
\end{equation*}
This set has a strong link with $O(U)$ even though, in general, $W(U)\neq O(U).$ However $W(U)$  can greatly help in the calculation of $O(U)$ thanks to the following result.

\begin{proposition} \label{O(V) e U(V)}
Let $U\in \mathcal{R}$. Then the following hold
\begin{enumerate}
\item[$(i)$] $O(U) \leq W(U)\leq G$.
\item[$(ii)$] $W(U) \in \mathcal{R}$ if and only if $W(U)=O(U)$.
\item[$(iii)$] $U \cong_{\mathcal{P}} O(U)$.
\item[$(iv)$] If $U\leq S_h\times \{id\}$, then $W(U)\leq S_h\times \{id\}$ and $W(U)=O(U)$.
\item[$(v)$] $W(U) $ can be not regular and hence different from $O(U)$.
\end{enumerate}
\end{proposition}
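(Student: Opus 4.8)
The plan is to treat the five items in the order stated, since each rests on the previous ones, and the whole proposition is essentially an exercise in unwinding the definitions of $\le_{\mathcal P}$, $W(U)$ and $O(U)$. For $(i)$, I would first show $W(U)$ is a subgroup of $G$: since $G$ is finite and $(id,id)\in W(U)$, it suffices to prove closure under products, and for $g_1,g_2\in W(U)$ and $p\in\mathcal P$ one writes $p^{g_1g_2}=(p^{g_2})^{g_1}$ via \eqref{azione}, picks $v\in U$ with $p^{g_2}=p^{v}$, and applies the defining property of $g_1$ to the profile $p^{v}$ to get $(p^{v})^{g_1}\in(p^{v})^{U}=p^{U}$, i.e. $p^{g_1g_2}\in p^{U}$. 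To then get $O(U)\le W(U)$ it is enough to note $\mathcal A(U)\subseteq W(U)$: if $V\in\mathcal A(U)$ then $V\le_{\mathcal P}U$, so $p^{g}\in p^{V}\subseteq p^{U}$ for all $g\in V$ and all $p$, hence $V\subseteq W(U)$, and since $W(U)$ is a subgroup, $O(U)=\langle\mathcal A(U)\rangle\le W(U)$.

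The crux of $(ii)$ and $(iii)$ is the observation that $W(U)$ always satisfies the last two conditions in Definition \ref{def_odivu}: one has $U\subseteq W(U)$ trivially, and the relation $p^{W(U)}\subseteq p^{U}$ for all $p$ is nothing but the definition of $W(U)$, i.e. $W(U)\le_{\mathcal P}U$. Hence, as soon as $W(U)\in\mathcal R$, we have $W(U)\in\mathcal A(U)$, so $W(U)\le O(U)$, which combined with $(i)$ gives $W(U)=O(U)$; conversely, $W(U)=O(U)$ implies $W(U)\in\mathcal R$ by Proposition \ref{regolar O(U)}$(i)$. This proves $(ii)$. For $(iii)$, $U\le O(U)$ from \eqref{odivu-fact} gives $U\le_{\mathcal P}O(U)$, while $O(U)\le W(U)$ together with $W(U)\le_{\mathcal P}U$ gives $O(U)\le_{\mathcal P}U$; hence $U\cong_{\mathcal P}O(U)$.

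For $(iv)$, the idea is to test the elements of $W(U)$ on the constant profile $p_{id}$. If $U\le S_h\times\{id\}$, every element of $U$ has trivial second component and therefore fixes $p_{id}$, so $p_{id}^{\,U}=\{p_{id}\}$; thus for $g=(\varphi,\psi)\in W(U)$ we must have $p_{id}^{\,g}=p_{id}$, and since $p_{id}^{\,g}=p_{\psi}$ this forces $\psi=id$. Hence $W(U)\le S_h\times\{id\}$, and then $W(U)\in\mathcal R$ because $\mathrm{Stab}_{W(U)}(p)\le W(U)\le S_h\times\{id\}$ for every $p$; by $(ii)$, $W(U)=O(U)$.

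For $(v)$, I would exhibit the small voting pair $(h,n)=(2,2)$ with $U=\{id\}\times S_2$. Identifying $S_2$ with $\{0,1\}$, the $U$-orbits on $\mathcal P=\{0,1\}^2$ are $\{00,11\}$ and $\{01,10\}$, and one checks directly that both the coordinate swap $((12),id)$ and the element $((12),(12))$ send each of these orbits onto itself, so that $W(U)=S_2\times S_2=G$; by Proposition \ref{lemma17} this $G$ is not regular, so $W(U)\notin\mathcal R$, and since $G$ is the only subgroup properly containing $U$ and it is not regular, $\mathcal A(U)=\{U\}$ and $O(U)=U\ne G=W(U)$. The only genuinely delicate point I anticipate is the closure step in $(i)$ — transporting the defining property of $g_1$ from $p$ to $p^{g_2}$ through \eqref{azione} — together with settling on a concrete example for $(v)$; everything else reduces to bookkeeping with $\le_{\mathcal P}$, $\mathcal A(U)$, $O(U)$ and the definition of regularity.
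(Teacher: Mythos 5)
Your proposal is correct and follows essentially the same route as the paper's own proof: the same product-closure argument for $W(U)\leq G$, the same observation that $W(U)\in\mathcal{R}$ forces $W(U)\in\mathcal{A}(U)$, the same constant-profile test for $(iv)$, and the same example $(h,n)=(2,2)$, $U=\{id\}\times S_2$ for $(v)$. No gaps.
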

\proof $ $
 $(i)$ In order to show that $W(U)\leq G$ it is enough to show that it is product closed. Let $g_1,g_2 \in W(U)$. Fix $p \in \mathcal{P}$. By the definition of $W(U)$  there exist  $u_1,u_2\in U$ such that $p^{g_2}=p^{u_2}$ and $(p^{u_2})^{g_1}=({p^{u_2}})^{u_1}$. It follows that 
$$p^{g_1g_2}= (p^{g_2})^{g_1}= (p^{u_2})^{g_1}=({p^{u_2}})^{u_1}=p^{u_1u_2} \in p^U.$$ 

Let now $Z \in \mathcal{A}(U)$. By definition of $\mathcal{A}(U)$ we have $p^{Z} \subseteq p^{U}$ for all  $p \in \mathcal{P}$. Hence $W(U) \geq Z$ and thus $W(U) \geq \langle \mathcal{A}(U) \rangle= O(U)$.
\smallskip

 $(ii)$ Assume that $W(U)\in \mathcal{R}.$ By the definition of  $W(U)$ we get $p^{W(U)} \subseteq p^{U}$  for all $p \in \mathcal{P}$. Thus $W(U) \leq_\mathcal{P} U$. Moreover, we surely have $U \leq W(U)\in \mathcal{R}$. It follows that  $W(U) \in \mathcal{A}(U)$ and  $W(U) \leq O(U)$. Thus, by  $(i)$, $O(U) = W(U)$.
 
 Conversely, if we have $O(U) = W(U)$, then by proposition \ref{regolar O(U)}\,$(i)$, we have $O(U)\in \mathcal{R}$ and thus $W(U)\in \mathcal{R}.$
\smallskip

$(iii)$ Since $U \leq O(U)$ we surely have  $U \leq_{\mathcal{P}} O(U)$. We show that $O(U) \leq_{\mathcal{P}} U$. Let $g \in O(U)$. Then by $(i)$ we have  $g \in W(U)$ and thus $p^{g} \in p^U$  for all $p \in \mathcal{P}$. Hence $p^{O(U)} \subseteq p^{U}$.
\smallskip

$(iv)$ Let $U\leq S_h\times \{id\}$ and $g=(\varphi, \psi)\in W(U).$ Consider $p\in \mathcal{K}$ a constant profile with $p_i=id$ for all $i\in[h]$. Then $p^U=\{p\}$ and thus $p^g\in p^U$ implies $p^g=p$ so that $p^g_1=p_1$, that is, $\psi id=id.$ Hence $\psi=id$ which says $g\in S_h\times \{id\}$. By $(ii)$ and definition of regularity it follows now $O(U)=W(U)$.

$(v)$ By $(ii)$, we just need to exhibit an example of a group $U$ such that $W(U)\notin \mathcal{R}.$ Consider $(h,n)=(2,2)$ and $U=\{id\} \times S_2$. By Proposition \ref{lemma17}, we know that $G=S_2\times S_2$ is not regular. Hence it is enough to show that $W(U)=G.$
Let $p\in \mathcal{P}$. Then we have
\[p^{((12),id)}= \begin{cases}p \quad \quad \quad \quad \text{if} \quad p_1=p_2 \\
 p^{(id,(12))} \quad 	\text{if} \quad p_1 \neq p_2
 \end{cases}
\]
In any case, we have  $p^{((12),id)} \in p^U$. Thus  $((12),id) \in W(U)$ and therefore $W(U) \geq \langle ((12),id), U \rangle = G$, so that $W(U)=G.$
\endproof

As an application of the concept of orbit extension, we can now complete the discussion about $A_h \times \{id\}$  begun in Example \ref{simmalterno}. The following result gives a complete answer to a problem raised by Kelly in \cite{Kelly91}. We emphasize that the alternating group is widely considered in social choice literature and it is important to decide when $A_h \times \{id\}$ is an anonymity group. However explicit formal proofs are usually missing. For instance, in \cite[p. 91]{Kelly91}, it is studied a particular family of social preference correspondences  for $h=3$ and $n=2$ and it is said that $A_3 \times \{id\}$  is not an anonymity group for them, giving no proof. 
\begin{proposition} \label{conclusione alterno}  Let $(h,n)$ be a voting pair. Then the following facts hold:
\begin{itemize}
\item[$(i)$] If $h \leq n!$, then $O(A_h \times \{id\})=A_h \times \{id\}$.
\item	[$(ii)$] If $h > n!$, then $O(A_h \times \{id\})=S_h \times \{id\}$.
\item[$(iii)$] $A_h \times \{id\}\in  {\rm SPFAG}_h(n)$ if and only if $h \leq n!$.
\end{itemize}
\end{proposition}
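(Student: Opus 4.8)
The plan is to compute $O(A_h\times\{id\})$ directly through the more tractable set $W(A_h\times\{id\})$ from Proposition \ref{O(V) e U(V)}, using as crucial structural input that $A_h$ is a maximal subgroup of $S_h$; statement $(iii)$ will then drop out by combining $(i)$--$(ii)$ with the necessary condition of Corollary \ref{necessh} and the positive result of Example \ref{simmalterno}\,$(a)$.

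First I would reduce the computation. Since $A_h\times\{id\}\leq S_h\times\{id\}$, Proposition \ref{O(V) e U(V)}\,$(iv)$ yields $O(A_h\times\{id\})=W(A_h\times\{id\})\leq S_h\times\{id\}$. By maximality of $A_h$ in $S_h$, the only subgroups of $S_h\times\{id\}$ containing $A_h\times\{id\}$ are $A_h\times\{id\}$ itself and $S_h\times\{id\}$, and since $\langle A_h,(12)\rangle=S_h$ we have $W(A_h\times\{id\})=S_h\times\{id\}$ if and only if $((12),id)\in W(A_h\times\{id\})$. Thus everything hinges on whether $p^{((12),id)}\in p^{A_h\times\{id\}}$ holds for \emph{every} $p\in\mathcal{P}$.

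The heart of the argument is a parity criterion, which I would prove using the action identity \eqref{azione}: for a fixed $p\in\mathcal{P}$, one has $p^{((12),id)}\in p^{A_h\times\{id\}}$ if and only if the stabilizer $\mathrm{Stab}_{S_h\times\{id\}}(p)$ contains an odd permutation, equivalently $\mathrm{Stab}_{S_h\times\{id\}}(p)\not\leq A_h\times\{id\}$. Indeed, for $\sigma\in S_h$ the equality $p^{(\sigma,id)}=p^{((12),id)}$ is equivalent to $(12)\sigma\in\mathrm{Stab}_{S_h}(p)$, and $(12)A_h=S_h\setminus A_h$. With this criterion in hand, $(i)$ and $(ii)$ follow immediately. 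If $h\leq n!$, choose $p\in\mathcal{P}$ with pairwise distinct components; then $\mathrm{Stab}_{S_h\times\{id\}}(p)$ is trivial, hence contained in $A_h\times\{id\}$, so $((12),id)\notin W(A_h\times\{id\})$ and thus $O(A_h\times\{id\})=A_h\times\{id\}$. If $h>n!$, then by the pigeonhole principle every $p\in\mathcal{P}$ takes a common value on two distinct coordinates $a,b$, so the odd transposition $(ab)$ lies in $\mathrm{Stab}_{S_h}(p)$; hence $((12),id)\in W(A_h\times\{id\})$ and $O(A_h\times\{id\})=S_h\times\{id\}$.

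Finally, for $(iii)$: if $h\leq n!$ then $A_h\times\{id\}$ is an anonymity group by Example \ref{simmalterno}\,$(a)$ (with $A_2=\{id\}$ when $h=2$); and if $h>n!$ then by $(ii)$ we have $O(A_h\times\{id\})=S_h\times\{id\}\neq A_h\times\{id\}$, so Corollary \ref{necessh} rules out $A_h\times\{id\}$ being an anonymity group. I expect the only genuinely delicate point to be the parity criterion in the third paragraph — carefully translating "$p^{((12),id)}$ lies in the $A_h\times\{id\}$-orbit of $p$" into a parity statement about $\mathrm{Stab}_{S_h}(p)$, and verifying that the degenerate case $h=2$ causes no exception — everything else being a direct application of the $W(U)$/$O(U)$ formalism already developed.
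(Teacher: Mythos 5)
Your proof is correct, and all the individual steps check out: the reduction to $W(A_h\times\{id\})$ via Proposition \ref{O(V) e U(V)}\,$(iv)$, the index-two/maximality argument reducing everything to whether $((12),id)\in W$, and the parity criterion (which is a correct translation: $p^{(\sigma,id)}=p^{((12),id)}$ with $\sigma\in A_h$ iff $(12)\sigma\in\mathrm{Stab}_{S_h}(p)$, and $(12)A_h$ is exactly the set of odd permutations). Parts $(ii)$ and $(iii)$ essentially coincide with the paper's argument: the paper also uses the pigeonhole principle to find $i\neq j$ with $p_i=p_j$ and rewrites $p^{(\varphi,id)}=p^{(\varphi(ij),id)}$ with $\varphi(ij)\in A_h$, which is your parity criterion in disguise, and it derives $(iii)$ from Example \ref{simmalterno}\,$(a)$ and Corollary \ref{necessh} exactly as you do. The genuine divergence is in $(i)$: the paper does not compute anything there, but simply observes that $A_h\times\{id\}$ is already known to be an anonymity group when $h\leq n!$ (Example \ref{simmalterno}\,$(a)$) and then invokes Corollary \ref{necessh} to conclude $O(A_h\times\{id\})=A_h\times\{id\}$, whereas you compute $W(A_h\times\{id\})$ directly from a repetition-free profile with trivial stabilizer. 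Your route makes $(i)$ and $(ii)$ two instances of one criterion and keeps the computation of the orbit extension independent of the anonymity-group machinery, at the cost of redoing (inside the proof) the distinct-components argument that the paper had already packaged into Example \ref{simmalterno}\,$(a)$; the paper's route is shorter but logically routes $(i)$ through the representability result rather than through the definition of $O(U)$. Your worry about $h=2$ is unfounded: $A_2=\{id\}$ is still maximal in $S_2$, and Example \ref{simmalterno}\,$(a)$ covers all $h\geq 2$.
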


\begin{proof} $(i)$ By Example \ref{simmalterno} we know that when $h \leq n!$, then $A_h \times \{id\}$ is an anonymity group. Thus, by Corollary \ref{necessh}, we get $O(A_h \times \{id\})=A_h \times \{id\}.$
\smallskip

$(ii)$ Let $n>h!$ and let $U:=A_h \times \{id\}$ and $V:=S_h \times \{id\}$. We show that $U \cong_{\mathcal{P}} V$. $U \leq_{\mathcal{P}} V$ is trivial. Conversely, fix $p \in \mathcal{P}$ and $(\varphi, id) \in V$. We show that $p^{(\varphi,id)} \in p^U$. If $\varphi \in A_h$, no proof is needed. Assume that $\varphi \notin A_h$. Since $h>n!$ there exist distinct $i,j \in [h]$ such that $p_j=p_i$. Then $p^{((ij),id)}=p$. From $\varphi \notin A_h$ follows $\varphi (ij)  \in A_h$, so \[p^{(\varphi,id)}=(p^{((ij),id)})^{(\varphi,id)}=p^{(\varphi (ij),id)} \in p^U.\]
That means $V \leq_{\mathcal{P}} U$ and $V \cong_{\mathcal{P}} U$. Since $V \in \mathcal{R}$ we have $V \in \mathcal{A}(U)$ and $O(U) \geq V>U$. 
Moreover, by Proposition \ref{O(V) e U(V)}, we have that $O(U)\leq W(U)\leq V$ and hence $O(U)=V.$	
	
\smallskip

$(iii)$ If $h \leq n!$ we invoke Example \ref{simmalterno}. If $n>h!$, then we know by $(ii)$ that $O(U)>U$. Thus, by Corollary 
 \ref{necessh}, $U$ cannot be an anonymity group. To deal with symmetry, simply apply Proposition \ref{simmetria e anonim}.
\end{proof}
Note that, as a consequence of Corollary \ref{O-repres} and Proposition \ref{conclusione alterno} we deduce the well-known fact that $A_h\notin {\rm BGR}_h(2)$, for $h\geq 3$. 
Another interesting consequence is that the converse of Proposition \ref{full-sym} does not hold. Indeed, pick $n=3$ and $h=5.$ Then we have $\gcd(5,3!)=1$, but the voting pair $(h,n)=(5,3)$ is not fully symmetric because, by Proposition \ref{conclusione alterno}, $A_5 \times \{id\}$ is not a symmetry group.
\begin{example} \label{quadrinomio2}
Let $(h,n)=(4,2)$ and  $U=K \times \lbrace id \rbrace$ where $K\leq S_4$ is the Klein group. Then $O(U)=U$.
\end{example}
\proof Throughout the proof we use the same notation introduced in the proof of Proposition \ref{quadrinomio}. Assume, by contradiction, that $O(U)>U$. Since  $U$ is normal in $S_4 \times \lbrace id \rbrace$, we have that  $O(U)$ is normal in  $S_4 \times \lbrace id \rbrace$ too.

 Hence, by the structure of the normal subgroup of $S_4$, we have 
$$O(U)\in \{S_4 \times \lbrace id \rbrace,\ A_4 \times \lbrace id \rbrace\}.$$ 

It follows that  $((123),id) \in O(U)$. Consider now $p=[1100]$ and note that 
\[
p^{((123),id)}=[0110]
\]
while 
\[
p^{U}=\lbrace [1100], [0011] \rbrace.
\]
Thus $p^{((123),id)} \notin p^U$ so that $((123),id)\notin W(U).$ As a consequence, by Proposition \ref{O(V) e U(V)}, we also have $((123),id)\notin O(U),$ a contradiction.
\endproof
Since we know, by Proposition \ref{quadrinomio}, that $K \times \lbrace id \rbrace\notin {\rm SPFSG}_4(2)$, the above example shows that the condition $O(U)=U$ does not guarantee that $U$ is a symmetry group. In other words, Corollary \ref{necessh} is not invertible. 
We are not aware of other examples of $U\leq S_h \times \{id\}$ such that $O(U)=U$, with  $U\notin {\rm SPFSG}_h(2)$. On the other hand, there surely are other pathological cases among subgroups not included in $S_h \times \{id\}$. For instance, consider  the voting pair $(3,2)$ and the group $U=\{id\} \times S_2$. By Proposition \ref{idpers2}, we know that  $U\notin {\rm SPFSG}_3(2)$. However, we have $O(U)=U$.  Indeed, it can be proved that for every voting pair $(h,n)$, every $U \leq \{id\} \times S_n$ realizes $O(U)=U$.  The reason for that relies on  some technical considerations about the splitting of $O(U)$ into the product of subgroups which will be part of future research.
\smallskip

Surely the comprehension of ${\rm SPFSG}_h(n), {\rm SPFSG}_h, {\rm SPFSG}(n), {\rm SPFSG}$ is far to be complete. Our results about them are just the tip of an iceberg which deserve to be discovered.

\vspace{3mm}

\end{document}